\documentclass[12pt,a4paper,reqno]{amsart} 

\usepackage[T1]{fontenc}
\usepackage[letterpaper, margin=0.8in]{geometry}
\usepackage{amsfonts, amssymb, amsthm, epsf, graphics, bbm, mathabx, verbatim, caption, subcaption, enumerate}
\usepackage{comment}
\usepackage{hyperref}
\usepackage{diagbox}
\usepackage{amsmath}
\usepackage{amsthm}
\usepackage{amssymb}
\usepackage{mathtools}
\usepackage{mathrsfs}
\usepackage{tikz}
\usetikzlibrary{patterns}
\usepackage{float}
\usepackage{enumitem}
\usepackage{pifont}
\usepackage[numbers]{natbib}

\usepackage{pgfplots}
\pgfplotsset{compat=1.18} 
\usepackage{graphicx}
\usepackage{subcaption}

\usepackage{graphicx}

\newcommand{\medcup}{\mathbin{\scalebox{1.4}{$\cup$}}}
\newcommand{\medcap}{\mathbin{\scalebox{1.4}{$\cap$}}}

\newtheorem{theorem}{Theorem}[section]
\newtheorem{lemma}[theorem]{Lemma}
\newtheorem{corollary}[theorem]{Corollary}
\newtheorem{proposition}[theorem]{Proposition}
\newtheorem{definition}[theorem]{Definition}

\theoremstyle{definition}

\newtheorem{example}{Example}
\newtheorem{fact}{Fact}

\newtheorem{remark}{Remark}

\newcommand{\R}{\mathbb{R}}
\newcommand{\Z}{\mathbb{Z}}

\newcommand{\J}{\mathcal{J}}

\newcommand{\dd}{\mathrm{d}}

\renewcommand{\AA}{\mathcal{A}}

\newcommand{\eps}{\varepsilon}

\newcommand{\spt}{\mathrm{spt}}

\newcommand{\norm}[1]{\left\|#1\right\|}

\newcommand{\tnorm}[1]{\vert\mkern-2mu\vert\mkern-2mu\vert#1\vert\mkern-2mu\vert\mkern-2mu\vert}

\title[Weighted mixed-norm estimates for circular averages]{Weighted mixed-norm estimates for circular averages\\ and exceptional set estimates for the wave equation}
\author[Yixuan~Pang~and~Chenjian~Wang]{Yixuan~Pang~and~Chenjian~Wang}
	
\date{\today}

\address{Department of Mathematics, University of Pennsylvania, Philadelphia, PA 19104, United States}
\email{pyixuan@sas.upenn.edu}

\address{Department of Mathematics, University of British Columbia, Vancouver, BC V6T 1Z2, Canada}
\email{chjwang@math.ubc.ca}

\subjclass[2020]{28A50, 28A75, 42B25, 42B37.}

\keywords{circular averages, mixed norm, fractal measures, tangency bound, slicing method, the wave equation, exceptional set estimates}

\begin{document}

\begin{abstract}
We prove mixed-norm estimates for circular averages with respect to $\alpha$-dimensional fractal measures on $\R^2$, using circle tangency bounds when $\alpha \in (0,1]$ and a $\delta$-discretized slicing lemma for fractals when $\alpha \in (1,2]$. The former estimate is sharp, while the latter improves previous results for $\alpha \in (\frac{3}{2},2]$. These estimates can be viewed as X-ray-type extensions of Wolff's and Bourgain's circular maximal functions. As applications, we obtain new exceptional set estimates for the radial integrability of functions in Lebesgue spaces, as well as for the H\"older regularity in time of solutions to the linear wave equation on $\R^2$. The latter results are the first of their kind.
\end{abstract}

\maketitle

\section{Introduction}\label{sec:introduction}
Let $n\geq 2$ and $\mathbb{S}^{n-1}$ be the unit sphere in $\R^n$. 

Define the \textit{spherical averages}
\begin{equation*}\label{Eq:SphericalAveraging}
    \AA^n f(x,r):=\int_{\mathbb{S}^{n-1}} f(x-ry) \mathrm d\sigma(y), \quad x\in \R^n, r>0.
\end{equation*}
Here $\sigma$ is the normalized measure on $\mathbb{S}^{n-1}$. In this paper, we will prove certain \textit{weighted mixed-norm estimates} for $\AA^2$, the \textit{circular averages}, and use them to provide new exceptional set estimates in integral geometry and partial differential equations. To better motivate this type of estimates, we briefly review some previous results.

\subsection{History and motivation}\label{subsec:History}
\subsubsection{Two classical types of spherical maximal functions}\phantom{.}

Spherical maximal functions such as $f \mapsto \sup_{r>0} |\AA^n f(\cdot,r)|$ have long been a central topic in harmonic analysis. Stein \cite{Stein76} ($n \geq 3$) and Bourgain \cite{Bourgain86} ($n=2$) proved that it is bounded in $L^p(\R^n)$ if and only if $p > \frac{n}{n-1}$. If the supremum is taken over a compact interval such as $I \coloneqq [1,2]$, then we expect \textit{$L^p$ improving estimates} of the form
\begin{align}\label{Eq:local_maximal}
    \norm{\sup_{r\in I} |\AA^n f(\cdot, r)|}_{L^q(\R^n)} \leq C_n \norm{f}_{L^p(\R^n)}
\end{align}
to hold for some $q>p$. Schlag \cite{Schlag97} ($n=2$) and Schlag-Sogge \cite{SS97} ($n\geq3$) obtained the sharp range of $(p,q)$ except for endpoints and boundaries, which was later studied by Lee \cite{Lee03}. In all these works, Fourier analysis plays an important role, while the $n=2$ case also relies on delicate geometric and combinatorial arguments. Notably, Mockenhaupt-Seeger-Sogge \cite{MSS92} gave a new proof of Bourgain's result \cite{Bourgain86} based on their \textit{local smoothing} estimates for the wave equation, and the idea was later used in \cite{SS97} to also recover Schlag's result \cite{Schlag97}. 

Despite the power of Fourier analytical tools, geometric methods offer a different perspective by focusing on the slightly weaker $\delta$-discretized version of (\ref{Eq:local_maximal}), and can sometimes be more effective. Given $0 < \delta < 1$, $x\in\R^n$, and $r \in [1,2]$, let $S_\delta(x,r) \coloneqq \{x\in\R^n: ||x| - r| \leq \delta\}$. Define the \textit{$\delta$-discretized spherical averages}
\begin{equation*}\label{Eq:discretized_SphericalAveraging}
    \AA^n_\delta f(x,r):=\frac{1}{\mathcal L^n(S_\delta(x,r))}\int_{S_\delta(x,r)}f(y)\mathrm dy,
\end{equation*}
where  $\mathcal{L}^n(\cdot)$ is the $n$-dimensional Lebesgue measure. Now  (\ref{Eq:local_maximal}) basically comes down to finding $(p,q)$ such that for any $\eps>0$, we have 
\begin{align}\label{Eq:local_maximal_delta}
    \norm{\sup_{r\in I} |\AA_\delta^n f(\cdot, r)|}_{L^q(\R^n)} \leq C_{n,\eps} \delta^{-\eps} \norm{f}_{L^p(\R^n)}.
\end{align}
Note that in both (\ref{Eq:local_maximal}) and (\ref{Eq:local_maximal_delta}), we may without loss of generality assume $f$ to be nonnegative, thus removing absolute values. Via duality, (\ref{Eq:local_maximal_delta}) can be reduced to a purely geometric problem about overlapping patterns of $\delta$-annuli. The $\delta^{-\eps}$-loss comes from pigeonholing, and can often be removed by further technical treatment. In this way, Schlag \cite{Schlag98} ($n=2$) fully recovered his previous result in \cite{Schlag97}. Recently, Hickman-Jančar \cite{HJ25} ($n \geq 2$) provided a simple geometric proof of (\ref{Eq:local_maximal_delta}) for $p=q \geq \frac{n}{n-1}$, thus essentially recovered Stein's result \cite{Stein76}.

Another influential maximal estimate of the form
\begin{align}\label{Eq:Wolff_maximal}
    \norm{\sup_{x\in \R^n} |\AA_\delta^n f(x,\cdot)|}_{L^q(I)} \leq C_{n,\eps} \delta^{-\eps} \norm{f}_{L^p(\R^n)} \quad (\forall\,\eps>0)
\end{align}
was introduced by Kolasa-Wolff \cite{KW99}. They showed that (\ref{Eq:Wolff_maximal}) cannot hold if $q > p$ (\cite[Proposition 2.1]{KW99}), and the $\delta^{-\eps}$-loss on the right-hand side cannot be removed in general \cite[Proposition 2.2]{KW99}. In other words, there is no $L^p$ improving property, and one cannot expect a bound independent of $\delta$ like (\ref{Eq:local_maximal}) to hold. As for $L^p$ boundedness, sharp results were obtained by Kolasa-Wolff \cite{KW99} ($p \geq 2$) for $n \geq 3$, and by Wolff \cite{Wolff97} ($p \geq 3$) for $n=2$. Again, the $n=2$ case is much more complicated, and the proof in \cite{Wolff97} borrows ideas from computational geometry. Recently, Wolff's result was generalized by Pramanik-Yang-Zahl \cite{PYZ24} to $1$-dimensional fractal families of $C^2$ curves satisfying Sogge’s cinematic curvature condition \cite{Sogge91}. One may consult \cite{PYZ24} for connections with restricted projection in $\R^3$ and the circular Furstenberg problem in $\R^2$, Wang-Zahl \cite{WZ26} for striking applications to the sticky Kakeya problem in $\R^3$, and Zahl \cite{Zahl25} for further multi-parameter developments in $\R^2$. It is worth pointing out that all known proofs of the $n=2$ case rely on circle tangency bounds and are purely geometric. The sharp local smoothing estimate in $\R^2$ Guth-Wang-Zhang \cite{GWZ20} only gives $p \geq 4$ (this was first mentioned in Kolasa-Wolff \cite{KW99} and rigorously proved in Ham-Ko-Lee \cite{HKL22}).

\subsubsection{A unified framework}\phantom{.}

One might wonder if we could unify the formulations for different types of spherical maximal functions. For example, the scale-dependent estimate (\ref{Eq:Wolff_maximal}) for $\AA_\delta^n$ can indeed be written as an estimate for $\AA^n$ with no superficial $\delta^{-\eps}$-loss, as long as we replace $L^p$ with $L_\eps^p$, which is the $L^p$ Sobolev space of order $\eps$ on the right-hand side, see Schlag \cite[Section~4]{Schlag03}). A more significant issue is that different maximal functions involve suprema over parameter spaces that are markedly different. 
For applications mentioned before, one may also want to include generalizations in Pramanik-Yang-Zahl \cite{PYZ24} of fractal type, which was originally only stated in certain ``dual $\delta$-discretized'' form. It turns out that introducing estimates for $\AA^n$ with respect to a fractal weight is both natural and unavoidable. More precisely, for $\alpha\in (0,d]$ ($d\in \mathbb{N}$ will typically be $n$ or $n+1$), we denote by $\mathfrak{C}^d(\alpha)$ the class of nonnegative Borel measure $\mu$ on $\R^{d}$ such that 
\begin{equation*}\label{Eq:FrostmanConstant}
    \langle \mu\rangle_\alpha:=\sup_{x\in \R^d,\rho\in (0,1]}\frac{\mu(B^d(x,\rho))}{\rho^\alpha}
\end{equation*}
is finite. Note that $\mu$ naturally satisfies the following \textit{$\alpha$-dimensional ball condition}: 
\begin{equation*}\label{Eq:BallCondition}
    \mu(B^d(x,\rho))\leq  \langle \mu\rangle_\alpha \rho^\alpha, \quad \forall\, x\in \R^d, \rho\in (0,1] .
\end{equation*}
By the Kolmogorov-Seliverstov-Plessner linearization \cite[Lemma 3.2]{HKL22}, one can cast all the spherical maximal estimates in the unified form ($\alpha\in(0,n+1]$: $\alpha=n$ for (\ref{Eq:local_maximal}), $\alpha=1$ for (\ref{Eq:Wolff_maximal})):
\begin{align}\label{Eq:weighted_form}
    \norm{\AA^n f}_{L^q(\R^{n}\times I;\overline{\nu})} \lesssim \langle\overline{\nu}\rangle_\alpha^{\frac{1}{q}} \tnorm{f},
\end{align}
where $\langle\overline{\nu}\rangle_\alpha^{\frac{1}{q}}$ is natural for homogeneity reasons, and $\tnorm{\cdot}$ can be any reasonable norm (which will typically be $L^p(\R^n)$ or $L_\gamma^p(\R^n)$ for some $\gamma$). The form (\ref{Eq:weighted_form}) is equivalent to certain \textit{fractal/weighted local smoothing estimates} for the half-wave operator, which was first studied by Wolff \cite{Wolff00} (see \cite{CHL17, HKL22, CDK25} for modern expositions). Treating variables $(x,r)$ symmetrically, it is amenable to space-time Lorentz rescaling and many techniques from \textit{weighted restriction} theory. The form (\ref{Eq:weighted_form}) for general $\alpha$ was first considered by Oberlin \cite{Oberlin06, Oberlin07} when studying general sphere packing problems, and systematically revisited using more advanced tools by Ham-Ko-Lee \cite{HKL22}.

\subsubsection{Spherical maximal estimates with respect to fractal weights}\phantom{.}

An interesting special case of (\ref{Eq:weighted_form}) is \textit{weighted spherical maximal estimates} of the form
\begin{align}\label{Eq:weighted_maximal}
    \norm{\sup_{r\in I}|\AA^n f(\cdot,r)|}_{L^q(\nu)} \lesssim \langle\nu\rangle_\alpha^{\frac{1}{q}} \norm{f}_{L^p(\R^n)}
\end{align}
for $\nu \in \mathfrak{C}^n(\alpha)$. Due to counterexamples given by Talagrand \cite{Talagrand80} ($n=2$) and Mitsis \cite[Proposition~3.2]{Mitsis99} ($n\geq 3$), one cannot expect (\ref{Eq:weighted_maximal}) to hold in general when $\alpha \leq 1$. When $\alpha > 1$ and $n \geq 3$, (\ref{Eq:weighted_maximal}) has been widely studied. The first positive result dates back to Mitsis \cite{Mitsis99}, who proved (\ref{Eq:weighted_maximal}) for $p=q=2$, thus resolving the sphere packing problem regarding centers of dimension $\alpha>1$. Motivated by bounding the Hausdorff dimension of blowup sets of $\sup_{r\in I}|\AA^n f(\cdot,r)|$, Iosevich et al. \cite{IKSTU19} considered general $p=q$ cases. All these results were later superseded by general $L^p$ improving estimates of Ham-Ko-Lee \cite[Corollary~3.5]{HKL22}. When $\alpha>1$ and $n=2$, the only known result is \cite[Corollary~1.4]{HKL22}. 

Historically, $L_\gamma^2$-based (i.e., $H^\gamma$-based) weighted maximal estimates for dispersive operators were extensively studied for the sake of determining the sizes of the divergence sets \cite{BBCR11}, which is a natural refinement of Carleson-type problem on almost everywhere convergence. See \cite{LR19, HKL22diverge} and the references therein for results on the half-wave operator closely related to $\AA^n$. See also \cite{Du_Zhang_WeightedRestriction,LR19, EP22} and the references therein for results on the Schr\"odinger operator. However, the estimate (\ref{Eq:weighted_maximal}) is more subtle since a complete understanding of it seems to require $L_\gamma^p$-based weighted local smoothing estimates with optimal regularity $\gamma$, which remain unresolved to date except for some special cases \cite{HKL22}. Even if $n=2$, the sharp (unweighted) local smoothing estimate due to Guth-Wang-Zhang \cite{GWZ20} only resolves the cases $\alpha \in (3 - \sqrt{3}, 3]$ \cite[Corollary~1.4]{HKL22}, leaving $\alpha \in (1, 3-\sqrt{3}]$ wide open. It is worth mentioning that the limiting case $\alpha = 1+$, corresponding to the previously mentioned result of Pramanik-Yang-Zahl \cite{PYZ24}, is also fully resolved \cite[Theorem~1.13]{CDK25}.

Therefore, the $n=2$ case of (\ref{Eq:weighted_maximal}) should be of particular interest, as it could tell how much remains to be explored about the incidence geometry of circles beyond local smoothing techniques. Also, the two extremes $\alpha=2$ and $\alpha=1$ correspond to the two classical types mentioned at the beginning, which are well understood. To be more precise, let $\nu \in \mathfrak{C}^2(\alpha)$. By \textit{Bourgain's circular maximal function estimate}, we mean
\begin{align}\label{Eq:Bourgain_Wolff}
    \norm{\sup_{r\in I} |\AA^2 f(\cdot,r)|}_{L^p(\nu)} \leq C_p 
    \langle \nu \rangle_\alpha^{\frac{1}{p}}\norm{f}_{L^p(\R^2)}
\end{align}
where $\alpha=2$ and $p=2+$ (i.e., $\dd\nu/\dd\mathcal{L}^2 \in L^\infty(\R^2)$), or the essentially equivalent 
\begin{align}\label{Eq:Bourgain_Wolff_delta}
    \norm{\sup_{r\in I} |\AA_\delta^2 f(\cdot,r)|}_{L^p(\nu)} \leq C_\eps \delta^{-\eps}
    \langle \nu \rangle_\alpha^{\frac{1}{p}}\norm{f}_{L^p(\R^2)} \quad (\forall\,\eps>0)
\end{align}
where $\alpha=2$ and $p=2$. On the other hand, by \textit{Wolff's circular maximal function estimate}, we mean (\ref{Eq:Bourgain_Wolff}) where $\alpha=1+$ and $p=3+$, or (\ref{Eq:Bourgain_Wolff_delta}) where $\alpha=1$ and $p=3$. The latter term should be appropriate, because Wolff \cite{Wolff97} pointed out that his arguments work equally well to resolve variants of (\ref{Eq:Wolff_maximal}) ($n=2$) like 
\begin{align*}
    \norm{\sup_{r\in I, x_2 \in \R} |\AA_\delta^2 f(\cdot,x_2,r)|}_{L^3(\R)} \leq C_\eps \delta^{-\eps}
    \langle \nu \rangle_\alpha^{\frac{1}{p}}\norm{f}_{L^3(\R^2)} \quad (\forall\,\eps>0),
\end{align*}
which is clearly a special case of (\ref{Eq:Bourgain_Wolff_delta}) ($\alpha=1$, $p=3$) by linearization, and is in fact not much weaker than it in view of Marstrand's projection theorem in $\R^2$.

\subsubsection{Weighted mixed-norm estimates}\phantom{.}\label{minorsec:weighted_mixed_norm}

Now that we have seen weighted estimates for spherical averages and their usefulness in geometric measure theory and partial differential equations, it is time to turn to the main focus of this paper. Given a set $\Omega \subseteq \R^n$, a measure $\nu \in \mathfrak{C}^n(\alpha)$, and a function $F$ supported on $\R^n \times I$, we define \textit{weighted mixed-norms} of $F$ on $\Omega$ with respect to $\nu$ as
\begin{align*}
    \norm{F}_{L_x^q(\Omega,\nu)(L^s_r(I))}:=\left(\int_{\Omega}\Big(\int_{I}|F|^s\mathrm dr\Big)^\frac{q}{s}\mathrm d\nu(x)\right)^{\frac{1}{q}},
\end{align*}
where $q,s\in [1,\infty]$.\footnote{{Here and below, we may use $L^p_r$ or $L^s_r$ to denote the integration over $r$, this should not be confused too much with the notion of Sobolev space.}} Measurability issues can happen in general, but we do not need to worry about it in all our settings. Roughly speaking, by \textit{weighted mixed-norm estimates} for $\AA^n$ we mean 
\begin{align}\label{Eq:General_MixedNorm}
    \norm{\AA^n f}_{L_x^q(\R^n,\nu)(L^s_r(I))} \lesssim \langle\nu\rangle_\alpha^{\frac{1}{q}}\norm{f}_{L^p(\R^n)},
\end{align}
which reduces to (\ref{Eq:weighted_maximal}) when $s=\infty$. Before everything, let us first elaborate on the backgrounds and motivations.
\begin{enumerate}[label={\rm (\roman*)}]
    \item \textbf{Necessity for $s < \infty$}: The first thing one might ask is why should we care about $s<\infty$, except that such an extension is natural from the point of view of harmonic analysis. The main reason is that in many applications (that will soon be mentioned below) regarding exceptional set estimates, one often cares about $\nu$ of low dimension $\alpha$. It would of course be good if we could still prove $s=\infty$ estimates, which is the strongest (once proved). Unfortunately, for a given $p$, if $\alpha$ is small, it may even be the case that one cannot expect (\ref{Eq:General_MixedNorm}) with $s=\infty$ to hold for any choice of $q$ (the best choice is $q=1$ by H\"older's inequality). A convincing example might be that, by Example~4 in Section~\ref{sec:necessary} ($n=2$), when $\alpha<4-p$, we at least need $s \leq \frac{2p}{4-\alpha-p}<\infty$.
    
    \item  \textbf{Singularity-integrability balance}: Following (i), an instructive intuition is that, when $\alpha$ becomes smaller, $\nu$ becomes more singular and the ranges of $(p,q,s)$ for (\ref{Eq:General_MixedNorm}) must shrink in some sense. If we choose to stick to $s=\infty$, then we are prevented from studying many interesting values of $p$. In this sense, considering general mixed-norm with $L_r^s$ ($s<\infty$) becomes natural, as we need to sacrifice some integrability in $r$ to balance such singularity in $x$. And a comprehensive understanding of (\ref{Eq:General_MixedNorm}) will quantitatively tell us how much do we need to sacrifice.

    \item \textbf{A forecast of our main result}: In particular, when $n=2$, by Talagrand's counterexample mentioned before, Wolff’s circular maximal function estimate of the form (\ref{Eq:Bourgain_Wolff}) fails when $\alpha=1$ and $p=q=3$. Also, even if we take $\nu = \mathcal{L}^2$ ($\alpha=2$), it is well known \cite{Bourgain86} that Bourgain’s circular maximal function estimate of the form (\ref{Eq:Bourgain_Wolff}) fails when $p=q=2$. As we will see in our main result (Theorem~\ref{Thm:MainTheorem_0}), we have (\ref{Eq:General_MixedNorm}) as long as $s<\infty$ in both cases (i.e., sacrifice only a little bit integrability in $r$). In general, the question is if we fix $p,q$ and make $\alpha$ smaller, how small should $s$ be to guarantee the validity of (\ref{Eq:General_MixedNorm})? Our Theorem~\ref{Thm:MainTheorem_0} also gives new results on this.
    
    \item \textbf{The $q=s$ case}: If $q=s$, then (\ref{Eq:General_MixedNorm}) is equivalent to studying (\ref{Eq:weighted_form}) in the special case when $\overline{\nu} = \nu \otimes \mathcal{L}^1|_I$ (with $\alpha$ replaced by $\alpha+1$). By interpolation, this fact allows us to use non-mixed-norm estimates (e.g., \cite{HKL22}) to obtain mixed-norm estimates. In Section~\ref{subsec:compare}, we will see that this may indeed yield new results on (\ref{Eq:General_MixedNorm}) in some cases (Figure~(\subref{fig:middle})). One reason might be that the current tools for (\ref{Eq:General_MixedNorm}) when $q \neq s$ are very limited due to the anisotropic structure, and going back to (\ref{Eq:weighted_form}) makes many rescaling-dependent techniques applicable.
    
    \item \textbf{Necessity for $q \neq s$}: However, it seems that for applications (that will soon be mentioned below) regarding exceptional set estimates, the resulting thresholds only depend on $s$ (for fixed $p$), the procedure in (iv) would not help much, as the strong constraint $q=s$ could make $s$ far from optimal. This justifies the necessity of studying (\ref{Eq:General_MixedNorm}) for general $q \neq s$. Our Theorem~\ref{Thm:MainTheorem_0} also falls into this regime, and we will come back to this point in Section~\ref{subsec:apply}. 

    \item \textbf{Pinned distance sets}: The only previous work we are aware of that deals with general $q\neq s<\infty$ is that of D. Oberlin and R. Oberlin \cite[Theorem~1.3]{Oberlin15} (see also discussions in Section~\ref{subsec:compare}). Their bounds imply exceptional set estimates for pinned distance sets in subcritical cases \cite[Theorem~2.4]{Oberlin15}, via a relation between the dimension of a measure and the $L^p$ (or $L^s$) boundedness of its convolution with a singular potential.

    \item \textbf{Integral geometry}: For a function $f\in L^p(\R^n)$, we clearly have $\AA^n f(x,\cdot) \in L_r^p[1,2]$ for any $x\in\R^n$ by Fubini's theorem. Due to the curvature of spheres, one might expect improved generic radial integrability (i.e., $\AA^n f(x,\cdot) \in L_r^s[1,2]$ for some $s>p$) outside of an exceptional set of $x$. A natural question is to bound the Hausdorff dimension of it. Via some standard real analysis, this exactly comes down to weighted mixed-norm estimates like (\ref{Eq:General_MixedNorm}). Nevertheless, we find no formal statements (for general $s<\infty$) in the literature, so we choose to explicitly give one (Corollary~\ref{Cor:integral_geometry}). 

    \item \textbf{PDEs}:  Assign an initial velocity to an oscillating system, and suppose that an observer standing at a position in the system is detecting fluctuations. A natural question is how ``smooth'' is the wave that the observer sees as time goes on. Physically speaking, most positions should see waves with high regularity, while in rare positions where significant resonance happens frequently, one can only expect to see waves with low regularity. A natural question is how ``rare'' could it be. It turns out that weighted mixed-norm estimates (\ref{Eq:General_MixedNorm}) can be used to study this problem in terms of the Hausdorff dimension. In particular, for the Cauchy problem of the linear wave equation on $\R^2$, by using our Theorem~\ref{Thm:wave_eqn}, we get exceptional set estimates for the H\"older regularity in time, which are the first of their type in the literature. The transition scheme is nontrivial and involves many technicalities (Section~\ref{sec:wave_eqn}). We will provide more historical remarks in Section~\ref{subsec:further_direction}.
\end{enumerate}

\subsection{Main results}\label{subsec:Main_result}

From now on, we abbreviate $\AA^n$ (or $\AA_\delta^n$) as $\AA$ (or $\AA_\delta$) when $n$ 
is clear from the context. Let $B \coloneqq B^n(0,1/4)$.  Our main result is the following \textit{localized} weighted mixed-norm estimate in $\R^2$:
\begin{theorem}\label{Thm:MainTheorem_0} Let $n=2$. 
There is a constant $C>0$ such that
    \begin{equation}\label{Eq:MixedNormEstimate_0}
        \norm{\AA f}_{L
        _x^q(B,\nu)(L^s_r(I))} \leq C \langle\nu\rangle_\alpha^{\frac{1}{q}}\norm{f}_{L^p(\R^n)},
    \end{equation}
    when 
    \begin{enumerate}[label={\rm (\arabic*)}]
        \item \label{Item1OfMainTheorem_0}$p = q = 3,\alpha\in (0,1]$, $1 \leq s < \frac{3}{1-\alpha};$
        \item\label{Item2OfMainTHeorem_0} $p=q=2,\alpha\in (1,2]$, $1\leq s < \frac{2}{2-\alpha}.$
    \end{enumerate}
    Moreover, the result in Case {\rm(1)} is essentially sharp given $p=3$.
\end{theorem}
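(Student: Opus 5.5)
The plan is to reduce \eqref{Eq:MixedNormEstimate_0}, by standard Littlewood--Paley and pigeonholing arguments, to a $\delta$-discretized restricted estimate. Then we prove that estimate by duality with a geometric count of annuli.

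\textbf{Step 1 (discretization).} First decompose $f$ in frequency. The pieces at frequency $2^k\sim\delta^{-1}$ satisfy $|\AA f_k|\lesssim \AA_\delta|f|$, together with an extra decay factor $\delta^{c}$ coming from the $\delta^{1/2}$ decay of $\widehat\sigma$. This decay should absorb the $\delta^{-\eps}$ losses because $s$ lies strictly below the endpoint. It therefore suffices to show, for every $\eps>0$,
\begin{equation*}
\norm{\AA_\delta f}_{L^q_x(B,\nu)(L^s_r(I))}\lesssim_\eps \delta^{-\eps-\beta}\langle\nu\rangle_\alpha^{1/q}\norm{f}_{L^p},
\end{equation*}
where $\beta=\beta(s)>0$ is small compared with the available decay; one interpolates with the trivial $L^1$/$L^\infty$ bounds. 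Next pigeonhole. Cover $\spt\nu\cap B$ by $\delta$-balls with comparable masses $\sim\lambda$, so that $\nu$ is replaced by a $\delta$-separated set of centers $X$ with $\#(X\cap B(x,\rho))\lesssim(\rho/\delta)^\alpha$, weighted by $\lambda\le\langle\nu\rangle_\alpha\delta^\alpha$. For each $x$, discretize $r$ into $\delta$-intervals and pigeonhole the level sets of $\AA_\delta f(x,\cdot)$. The quantity to bound is then $\sum_{x\in X}\lambda\,(\delta\#R_x)^{q/s}t^q$, where $R_x$ is the set of radii with $\AA_\delta f(x,r)\sim t$. By the dual formulation, this reduces to an $L^{p'}$ bound for $\sum_{x}\sum_{r\in R_x}c_x\,\mathbf 1_{S_\delta(x,r)}$ in which, for each center, the number of annuli is prescribed. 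This is a dual-$\delta$-discretized Wolff/Bourgain-type statement in which the multiplicity per center is controlled only in $L^{q/s}$-average.

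\textbf{Step 2, Case (1) ($p=q=3$, $\alpha\le1$).} Expand $\norm{\sum \mathbf 1_{S_\delta}}_{3/2}^{3/2}$; more precisely, estimate via the $L^3$ dual by counting triple/pairwise intersections. Then apply the circle tangency bounds of Wolff and Pramanik--Yang--Zahl, namely the fractal version for $1$-dimensional families of circles. The family $\{(x,r):x\in X, r\in R_x\}\subset\R^3$ is controlled by an $(\alpha+1)$-dimensional ball condition in $(x,r)$, but with the $r$-fibers weighted by the $L^{q/s}$ mass. Hölder in $r$ costs a factor $(\delta^{-1})^{1/3-1/s}$, and the ball condition gains $\delta^{(1-\alpha)/3}$-type savings. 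Balancing the two gives exactly $s<3/(1-\alpha)$. The key input is the PYZ incidence bound for the number of $\delta$-tangent pairs among circles whose parameters satisfy a Frostman-type condition. The main obstacle lies here: the family is not $1$-dimensional, so it must be split into $\sim$-$1$-dimensional subfamilies. I would try to slice $X$ along directions via a Marstrand-type projection, so that each slice together with the radii forms a one-dimensional family, and then sum the slices with Hölder.

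\textbf{Step 3, Case (2) ($p=q=2$, $\alpha\in(1,2]$).} Here I would slice. Using the $\delta$-discretized slicing lemma for fractals, decompose $\nu$ into roughly $\delta^{-(\alpha-1)}$ lines' worth of $1$-dimensional slices. On each slice, apply the $L^2$ estimate obtained from the $\alpha=1$ theory, i.e. Case (1) interpolated with the trivial $L^2$ bound, which is $L^2$-based with $s$ slightly above $2$. Summing over slices via Cauchy--Schwarz gives the range $s<2/(2-\alpha)$, which interpolates between $s=2$ at $\alpha=1$ and $s=\infty$ at $\alpha=2$.

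\textbf{Step 4 (sharpness in Case (1)).} Take $f=\mathbf 1$ of a $\delta$-annulus $S_\delta(0,1)$ and $\nu$ an $\alpha$-dimensional Cantor measure on a small ball around $0$, for which $\AA_\delta f(x,r)\sim\delta^{-1/2}$ on an $r$-interval of length $\sim\delta$ when $|x|\lesssim\delta$. Comparing norms, and rescaling the clustering of centers near $0$ at scales $\delta^{1/2}$ to exploit internal tangencies, forces $s\le 3/(1-\alpha)$, in the manner of Example~4.
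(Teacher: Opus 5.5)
Your outline has the paper's architecture: discretize and dualize, tangency bounds for $\alpha\le 1$, slicing for $\alpha>1$, interpolation against high-frequency decay, and Knapp-type examples. However, the two steps that carry the content would not work as you describe them.

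\textbf{Step~2.} The splitting you propose for the ``main obstacle'' loses a power of $\delta$. For $\alpha\le1$ the fibres of a typical projection of $X$ are essentially single points. So your slices, together with their radii, are just the concentric families $\{x\}\times R_x$. Summing subfamilies by H\"older or the triangle inequality discards exactly the tangencies between circles in different subfamilies. Concretely, take $\#X\sim m\sim\delta^{-\alpha}$. Both the concentric splitting and the splitting into $m$ families with one radius per center exceed the dual target $(m\delta)^{(2+\alpha)/3}\delta^{-\alpha/3}(\#X)^{2/3}$ by a factor $\delta^{-\alpha(1-\alpha)/3}$. (In the second splitting each family is $\alpha$-dimensional, so PYZ does apply to it.) Your balancing heuristic also gives the wrong exponent: $\frac13-\frac1s=\frac{1-\alpha}3$ yields $s=3/\alpha$.

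The paper never decomposes into one-dimensional subfamilies; it runs the PYZ machinery on the whole $(1+\alpha)$-dimensional family. Two-ends reductions in parameter space localize to balls $B$ of radius $t$ and a tangency scale $\Delta$. This is lossless, since the sets $E_B$ overlap boundedly. The coarse/fine rectangle argument with the bilinear incidence bound of Proposition~\ref{Prop:IncidenceBound}, which needs no non-concentration hypothesis, then gives $\lessapprox(\#\mathcal C_B\,\delta)^{3/2}t^{-1/2}$ on each $B$. The product structure enters only afterwards, through $\#\mathcal C_B\le\#X_B\,m_B$ with $\#X_B\lessapprox(t/\delta)^\alpha$ and $m_B\le\min\{m,t/\delta\}$. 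The hypothesis $\alpha\le1$ is exactly what gives $(\#\mathcal C_B\,\delta/t)^{1/2}\lessapprox m^{\alpha/2}$. Summing over $B$ yields $m^{1+\alpha/2}\delta\,\#X$, which is the dual bound at $s=3/(1-\alpha)$.

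\textbf{Step~3.} This step cannot reach $s=2/(2-\alpha)$ in the range where that result is new. Since $\rho^\alpha\le\rho$ for $\rho\le1$, one has $\langle\nu\rangle_1\le\langle\nu\rangle_\alpha$ when $\alpha\ge1$. So applying $\alpha=1$ estimates to pieces of $\nu$ and summing reproduces at best the $\alpha=1$ estimate with the same inner exponent; a partition in $x$ never improves the $L^s_r$ exponent. For one-dimensional weights at $p=2$, Example~2 (with $\alpha=1$) forces $s\le4$, whereas $2/(2-\alpha)>4$ precisely when $\alpha>3/2$. With the input you actually name (``$s$ slightly above $2$'') you would get even less.

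The missing idea is to exploit the $(\alpha-1)$-dimensional structure \emph{inside} each slice. The paper slices $X$ by parallel $\delta$-tubes on which it is Katz--Tao $(\delta,\alpha-1)$, via Theorem~\ref{Thm:DiscretizedSlicing}, applied before the duality step so the refinement preserves the mixed norm. It then restricts circles to arcs of length $\sim2\pi/100$. By Lemma~\ref{Lem:GeometricLemma}(3), arcs whose centers lie in a common tube then meet transversally: $|C^*_\delta(x_1,r_1)\cap C^*_\delta(x_2,r_2)|\lesssim\delta^2/(|x_1-x_2|+\delta)$. Next it runs an $L^2$ C\'ordoba count on each slice. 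Let $K$ be the number of centers per slice; there are at most $\min\{(\theta/\delta)^{\alpha-1},K\}$ centers at distance $\sim\theta$, each with at most $\min\{\theta/\delta,m\}$ relevant radii. Only then does it apply the triangle inequality over slices.

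\textbf{Smaller points.}
\begin{enumerate}
\item In Step~4, $\AA_\delta\chi_{S_\delta(0,1)}(x,r)\sim1$, not $\delta^{-1/2}$, for $|x|\lesssim\delta$ and $r\in[1,1+\delta]$. Your value would give a necessary condition contradicting Case~(1) itself. With the correct value, Example~1 with $p=q=3$ gives $s\le3/(1-\alpha)$ directly, with no rescaling.
\item In Step~1 there is no pointwise gain $\delta^c$, and the trivial $L^1$/$L^\infty$ bounds carry no decay. You need a genuine decay estimate for $\AA(f*\psi_j)$ at another exponent triple; the paper takes it from \cite{CDK25} applied to $\nu\otimes\mathcal L^1|_I$. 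In Case~(1) this must be followed by a second interpolation with a trivial bound to return to $p=3$.
\item After pigeonholing $\nu(Q)\sim\lambda$, $X$ is Katz--Tao only with constant $\sim\langle\nu\rangle_\alpha\delta^\alpha/\lambda$. This is why the paper needs the Demeter--Wang factoring lemma.
\end{enumerate}
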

\begin{remark}
    We will always assume $p,q,s \in [1,\infty]$. We call (\ref{Eq:MixedNormEstimate_0}) the \textit{localized} version of (\ref{Eq:General_MixedNorm}) as we have replaced $\R^n$ with $B$ on the left hand side. However, due to the local property of $\AA$ ($r\in I$), it is easy to show that these two versions are actually equivalent to each other whenever $q \geq p$ (which is the case for us). We will provide details for such kind of equivalence whenever needed, and the reader should now just take (\ref{Eq:MixedNormEstimate_0}) as our main goal.
\end{remark}
\begin{remark}
    In Case (1), when $\alpha=1$ and $s \rightarrow \infty$, we are essentially back to Wolff's circular maximal function. In Case (2), when $\alpha=2$ and $s \rightarrow \infty$, we are essentially back to Bourgain's circular maximal function. As mentioned in the previous subsection, in both cases, we cannot expect (\ref{Eq:MixedNormEstimate_0}) to hold without scale loss if $s = \infty$ (maximal estimates), so it is natural to sacrifice the $s$-endpoints and consider the general weighted mixed-norm estimates. 
\end{remark}

Roughly speaking, Case (1) of Theorem~\ref{Thm:MainTheorem_0} is proved by adapting circle tangency bounds (\cite{Wolff97, PYZ24}) to the mixed-norm setting, while Case (2) is proved by first developing a $\delta$-discretized slicing lemma (Theorem~\ref{Thm:DiscretizedSlicing}) for fractals and then carefully work out an $L^2$ Córdoba-type argument on each slice. Both of the approaches are new in the study of (\ref{Eq:MixedNormEstimate_0}) for $s<\infty$. Theorem~\ref{Thm:DiscretizedSlicing} is pretty general in that it holds in any dimension (not just $n=2$), and does not seem to exist in its form in the literature. It will be proved by applying discretization procedures to Mattila's slicing lemma for singular sliced measures, which were first proved without using Fourier analysis \cite{Mattila81}. In this sense, our proof of Theorem~\ref{Thm:MainTheorem_0} is purely geometric. Working in discretized settings is technically important for us, and we hope that the paradigm here is of independent interest and has the potential to be applied to other problems.

As mentioned in Items (iv) and (v) of Section~\ref{minorsec:weighted_mixed_norm}, Fourier analytic approaches does not seem to work effectively for (\ref{Eq:MixedNormEstimate_0}) due to lack of scaling invariance. Previously, D. Oberlin and R. Oberlin \cite{Oberlin15} also adopted purely geometric methods by using a ``global'' $L^2$ Córdoba-type argument (similar techniques were also used in \cite{Oberlin06, Oberlin06radon, Oberlin07, Oberlin12}). In contrast, our $L^2$ arguments for Case (2) are more ``local'' as they only happen on each slice. It turns out that these two ways do not contain each other in general, and we will systematically compare the results in Section~\ref{subsec:compare}. Here we only point out that the ranges of $s$ in Theorem~\ref{Thm:MainTheorem_0} are better than previous results when $\alpha \in [\frac{1}{2},1]$ or $\alpha \in (\frac{3}{2},2]$, i.e., when $\alpha$ is not too small in both cases.

\begin{remark}
    Our proof should extend to any $3$-dimensional family of \textit{cinematic curves} in $\R^2$ satisfying the \textit{transversality condition} (see \cite{Zahl25}). We mainly focus on circular averages for two reasons besides historical considerations in \ref{subsec:History}. Firstly, they possess rotational symmetry and make the pictures simpler, thus avoiding some technicalities. Secondly, we exactly need them for later applications to wave equations (Theorem~\ref{Thm:wave_eqn}).
\end{remark}

By further interpolating Theorem~\ref{Thm:MainTheorem_0} with the results in \cite{HKL22, Oberlin15} and trivial bounds, it is plausible that one can get many new ranges of the quadruple $(p,q,s,\alpha)$ for (\ref{Eq:MixedNormEstimate_0}) to hold, and so extending the scope of all the applications that we will soon state. However, we stick to Theorem~\ref{Thm:MainTheorem_0} for the following reasons. Firstly, $p=2,3$ correspond to the indices for Wolff's and Bourgain's circular maximal functions respectively, which naturally makes our estimates X-ray--type extensions (see Item (c) in Section~\ref{subsec:add_background}). Secondly, $\alpha\in (0,1],(1,2]$ are necessary for our methods to work. Thirdly, if we fix $(p,\alpha)$ and carry out our methods for general pairs of $(q,s)$, then the endpoint (best) indices we obtain are exactly those in Theorem~\ref{Thm:MainTheorem_0}. In this sense, ``$p=q$'' is really a lucky coincidence that naturally appears. Finally, the entire ranges generated by interpolation are very complicated (boring to write out) and do not provide any valuable insights. In contrast, the current elegant ranges are already enough to demonstrate the efficiency of our methods, and to yield interesting applications.

For $\delta$-discretization, we need to first reduce Theorem~\ref{Thm:MainTheorem_0} to the following Theorem~\ref{Thm:MainTheorem}. Such a reduction can be done by interpolating Theorem~\ref{Thm:MainTheorem} with the ``high frequency decay'' estimates (Section~\ref{sec:high_freq_decay}).

\begin{theorem}\label{Thm:MainTheorem} Let $n=2$.
For all $\eps>0$, there is a constant $C_\eps>0$ such that 
    \begin{equation}\label{Eq:MixedNormEstimate}
        \norm{\AA_\delta f}_{L
        _x^q(B,\nu)(L^s_r(I))}
        \leq C_\eps\delta^{-\eps}\langle\nu\rangle_\alpha^{\frac{1}{q}}\norm{f}_{L^p(\R^n)},
    \end{equation}
    holds for all $0<\delta<1,$ when 
    \begin{enumerate}[label={\rm (\arabic*)}]
        \item \label{Item1OfMainTheorem}$p=q=3,\alpha\in (0,1]$, $1\leq s\leq \frac{3}{1-\alpha};$
        \item\label{Item2OfMainTHeorem} $p=q=2,\alpha\in (1,2]$, $1\leq s\leq \frac{2}{2-\alpha}.$
    \end{enumerate}
    Moreover, the result in Case {\rm(1)} is essentially sharp given $p=3$.
\end{theorem}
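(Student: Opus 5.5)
The plan is to prove the two cases separately, each by duality and reduction to a $\delta$-discretized incidence problem. In both cases we may assume $f\ge 0$. We pigeonhole $\nu$ at scale $\delta$: we cover $B$ by $\delta$-balls and keep the $\nu$-mass of each ball, which is at most $\langle\nu\rangle_\alpha\delta^\alpha$. Dyadic pigeonholing then lets us assume that the relevant centres $x$ form a $\delta$-separated set $X$ with roughly constant mass per ball, so $|X\cap B(y,\rho)|\lesssim (\rho/\delta)^\alpha$ for each ball $B(y,\rho)$. This costs only $\log(1/\delta)$ factors. We also discretize $r\in I$ into $\delta$-intervals. Since $\AA_\delta f(x,r)$ is essentially constant on $\delta$-cubes in $(x,r)$, the left side becomes $\big(\sum_{x\in X}\delta^\alpha(\sum_r \delta\, a_{x,r}^s)^{q/s}\big)^{1/q}$, where $a_{x,r}=\AA_\delta f(x,r)$. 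We then pigeonhole the values of $a_{x,r}$ and the number of radii per centre. This reduces matters to the following restricted-type statement. Let $f=\mathbf 1_E$, let $\lambda>0$, and suppose that for each $x\in X$ there is a set $R_x$ of about $M$ $\delta$-separated radii with $\AA_\delta\mathbf 1_E(x,r)\ge\lambda$ for $r\in R_x$. We must bound $\delta^\alpha|X|(\delta M)^{q/s}\lambda^q$ by $\delta^{-\eps}|E|^{q/p}$. Real interpolation then upgrades this to strong type, losing at most another $\delta^{-\eps}$.

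Case (1), $p=q=3$ and $\alpha\le1$, uses circle tangency bounds. The family of annuli $\{S_\delta(x,r):x\in X,r\in R_x\}$ is a collection of $\delta$-annuli whose parameter set $(x,r)$ is a "product-like" set. It obeys an $(\alpha+\text{something})$-dimensional spacing condition in $\R^3$: a $\rho$-ball contains at most $(\rho/\delta)^\alpha\min(M,\rho/\delta)$ annuli. We invoke the fractal version of Wolff's tangency/incidence bound from Pramanik--Yang--Zahl. It says that for a family of $\delta$-annuli satisfying a Frostman-type condition of dimension at most $1$ in the "2D shadow", $\|\sum \mathbf 1_{S_\delta}\|_{L^{3/2}}\lesssim \delta^{-\eps}(\delta\cdot\#)^{2/3}$ times the appropriate non-concentration constant. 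Here we regroup the annuli with the same centre as a single $1$-parameter family, so that the relevant count is $|X|\cdot M$ and the non-concentration constant is governed by $\delta^{\alpha}|X|$ (Frostman) together with $M\le\delta^{-1}$. By duality, $\lambda\, M\delta\,|X|\delta^{\alpha}\cdot\delta^{-\alpha}\lesssim \int_E\sum\mathbf 1_{S_\delta}\cdot\delta^{-1}\le |E|^{1/3}\|\sum\mathbf 1_{S_\delta}\|_{3/2}\delta^{-1}$. We then optimize. The exponent $s=3/(1-\alpha)$ should come out when we balance the worst case $M\sim\delta^{-1}$ (all radii, where the family is $(1+\alpha)$-dimensional and we need the multi-radius version) against $M\sim1$ (where the $\alpha$-dimensional Talagrand-type obstruction appears). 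For sharpness we use the examples of Section on necessary conditions: a Cantor-type set of centres on a line, combined with a $\delta$-neighbourhood of a circle tangent family (Talagrand/Kolasa--Wolff example), shows $s\le 3/(1-\alpha)$ when $p=3$.

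Case (2), $p=q=2$ and $\alpha\in(1,2]$, uses the $\delta$-discretized slicing lemma, Theorem on discretized slicing. We slice the $\delta$-discretized $\alpha$-dimensional set $X$ by lines in a direction $\theta$. For most $\theta$, a typical slice is an $(\alpha-1)$-dimensional $\delta$-discretized set on a line, and these slices carry the mass of $X$ up to $\delta^{-\eps}$. On each slice $\ell$ we run an $L^2$ Córdoba argument. We expand $\|\sum_{x\in X\cap\ell,\,r\in R_x}\mathbf 1_{S_\delta(x,r)}\|_2^2$ and use the bound $|S_\delta(x,r)\cap S_\delta(x',r')|\lesssim \delta^2/\sqrt{(\delta+|x-x'|)(\delta+|r-r'|+ \big||x-x'|-|r-r'|\big|)}$. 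Summing over radii in $L^s$ with Hölder gives a quantity controlled by $\sum_{x,x'\in X\cap \ell}|x-x'|^{-1/2}$ times powers of $M$. Because the slice is $(\alpha-1)$-dimensional on a line, this sum is finite exactly when the exponent allows it. Hölder in $r$ with $s=2/(2-\alpha)$ converts the $M$-dependence into the required $(\delta M)^{2/s}$. Finally we integrate over slices and directions, and undo the pigeonholing.

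The main obstacle is the bookkeeping in Case (1). We must verify that the family of annuli coming from an $\alpha$-dimensional set of centres with about $M$ radii each satisfies the hypotheses of the PYZ/Wolff tangency bound with exactly the right non-concentration constant, uniformly in $M$, so that the endpoint $s=3/(1-\alpha)$ is attained with only a $\delta^{-\eps}$ loss. I would first try applying the bound separately to each rescaled subfamily of annuli whose centres lie in a $\rho$-ball, and then sum.
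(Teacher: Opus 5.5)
Your outline follows the paper's architecture: discretize and dualize, use the Pramanik--Yang--Zahl tangency bound for $\alpha\le 1$, and use discretized slicing plus a slice-wise C\'ordoba argument for $\alpha>1$. In each case, though, the step that produces the endpoint exponent is missing, and in Case (2) the step you describe would fail.

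\textbf{Case (2).} You run C\'ordoba on \emph{full} annuli, and two circles with centres on a common slice line can be internally tangent (all circles $C((a,0),1+a)$ pass tangentially through $(-1,0)$). Your intersection bound is false there. Since $|r-r'|+\big||x-x'|-|r-r'|\big|\ge|x-x'|$, it collapses to $\delta^2/(\delta+|x-x'|)$, while the true size is $\delta^{3/2}|x-x'|^{-1/2}$. With the correct Lemma~\ref{Lem:GeometricLemma}(2) you get the $|x-x'|^{-1/2}$ sums you mention, but the scheme then cannot reach the endpoint. Take $m=1$ and $\delta^{-1}$ slices, each holding $K\sim\delta^{1-\alpha}$ centres $(a,y)$ with radius $1+a$, the $a$'s forming a spread-out $(\alpha-1)$-dimensional set with $\alpha>3/2$. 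Each slice then has $L^2$ norm $\sim K\delta^{3/4}$, and this is sharp for that clamshell. Summing slices by the triangle inequality gives $\delta^{3/4-\alpha}$, which exceeds the dual target $\delta^{-\alpha/2}$ exactly when $\alpha>3/2$, the range where the theorem is new. Any cleverer way of combining slices must handle cross-slice tangencies, which is the full two-dimensional problem.

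The missing idea is the small-cap reduction. Split each circle into $O(1)$ arcs of angular width $2\pi/100$. For each arc direction, use Corollary~\ref{Coro:SlicingForCentersOnThePlane} to slice the centres along lines \emph{perpendicular} to that direction. Circles with centres on one slice line can then only be tangent at points of that line, away from the arcs. So Lemma~\ref{Lem:GeometricLemma}(3) forces $\Delta\gtrsim d$, and arc intersections are $\lesssim\delta^2/(\delta+|x-x'|)$.

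Next count $\min\{(\theta/\delta)^{\alpha-1},K\}$ centres and $\min\{\theta/\delta,m\}$ radii at distance $\theta$, and sum over the $N/K\le\delta^{-1}$ slices by the triangle inequality. This gives $Nm^{1/2}\delta^{1/2}$ if $K\le m^{\alpha-1}$, and $NK^{-1/2}m^{\alpha/2}\delta^{1/2}$ otherwise. Since $K\ge N\delta$, both are at most $m^{\alpha/2}N^{1/2}$, which is the target at $s=2/(2-\alpha)$. One good direction per arc suffices, so no averaging over directions is needed. Your restricted-type set-up does help in one respect: refining $X$ to its sliced part costs nothing there, whereas the paper must arrange this by slicing before dualizing.

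\textbf{Case (1).} The quantitative heart is absent, and the constant you name is the wrong one. $\delta^\alpha|X|$ is just the total $\nu$-mass, and nothing is balanced between $M\sim1$ and $M\sim\delta^{-1}$. What is needed is one line from your own spacing count. Because $\alpha\le1$, you have $(\rho/\delta)^\alpha\min\{M,\rho/\delta\}\le M^\alpha\,\rho/\delta$ for every $\rho$, so $\{(x,r)\}$ is a Katz--Tao $(\delta,1,M^\alpha)$-family of circles.

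You then need the tangency bound in the form $\|\sum\chi_{C^\delta}\|_{3/2}^{3/2}\lessapprox K^{1/2}\delta\,\#\mathcal C$ for Katz--Tao $(\delta,1,K)$ families. The paper obtains exactly this after its two-ends reduction, as $\sum_B(\delta\#\mathcal C_B)^{3/2}t^{-1/2}$ with $\#\mathcal C_B\le\#X_Bm_B$, $\#X_B\lessapprox(t/\delta)^\alpha$ and $m_B\lesssim t/\delta$. Alternatively it follows from the $K=1$ statement by factoring. The result is $M^{1+\alpha/2}\delta\,\#X$, which equals $(M\delta)^{3/(2s')}\delta^{-\alpha/2}\#X$ exactly when $s=3/(1-\alpha)$, uniformly in $M$.

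Your fallback of applying the bound to the centres in each $\rho$-ball and summing loses without the two-ends reduction, because circles from different balls overlap. The two-ends step is what makes the sets $E_B$ essentially disjoint.

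\textbf{Smaller points.}
\begin{itemize}
\item After pigeonholing $\nu$, the centre set has Katz--Tao constant $\langle\nu\rangle_\alpha\delta^\alpha/\nu_0$, not $O(1)$. It must be factored out (as the paper does with the Demeter--Wang lemma) or tracked.
\item Your displayed duality inequality is off by a factor of $\delta$.
\item A single restricted-type bound is upgraded to strong type by an $O(\log(1/\delta))$-level decomposition, not by real interpolation.
\item For sharpness, the single-annulus example with $\nu(B(0,\delta))=\delta^\alpha$ already forces $\frac1s\ge\frac{1-\alpha}3$ at $p=q=3$.
\end{itemize}
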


\begin{remark}
    It is easy to see that when $1/10 \leq \delta<1$, the inequality (\ref{Eq:MixedNormEstimate}) trivially holds true for any $p,q,s \in [1,\infty]$. So we may without loss of generality always assume that $0<\delta<1/10$. Besides, we may always only consider those small $\eps$, e.g., $\eps<1/10$.
\end{remark}
\begin{fact}\label{Fact:Holder}
    By H\"older's inequality in $L^s(I)$, we see that if (\ref{Eq:MixedNormEstimate_0}) holds for some $s$, then it also holds for smaller $s$. Also, by H\"older's inequality in $L^q(\nu)$ and $\norm{\nu\chi_B} \leq \langle\nu\rangle_\alpha$, we see that if (\ref{Eq:MixedNormEstimate_0}) holds for some $q$, then it also holds for smaller $q$. The same conclusions also hold for (\ref{Eq:MixedNormEstimate}).
\end{fact}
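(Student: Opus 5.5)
The plan is to prove both monotonicity claims by applying H\"older's inequality once in the appropriate variable, keeping track of the power of $\langle\nu\rangle_\alpha$ so that the homogeneity factor $\langle\nu\rangle_\alpha^{1/q}$ becomes the correct $\langle\nu\rangle_\alpha^{1/\tilde q}$ for the smaller exponent. The argument does not depend on whether the operator is $\AA$ or $\AA_\delta$, nor on whether the constant is $C$ or $C_\eps\delta^{-\eps}$, so a single argument covers (\ref{Eq:MixedNormEstimate_0}) and (\ref{Eq:MixedNormEstimate}).

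\emph{Monotonicity in $s$.} Let $1\le \tilde s\le s$. Since $I=[1,2]$ has Lebesgue measure $1$, H\"older's inequality gives, for every fixed $x$,
\[
\Big(\int_I |\AA f(x,r)|^{\tilde s}\,\dd r\Big)^{1/\tilde s}\le \Big(\int_I |\AA f(x,r)|^{s}\,\dd r\Big)^{1/s}.
\]
For $s=\infty$ the right-hand side is read as the essential supremum. This pointwise inequality in $x$ is preserved after taking the $L^q(B,\nu)$ norm. Hence the left-hand side of (\ref{Eq:MixedNormEstimate_0}) with exponent $\tilde s$ is bounded by the same expression with exponent $s$, and therefore by the same right-hand side.

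\emph{Monotonicity in $q$.} Let $1\le \tilde q\le q$ and set $G(x):=\norm{\AA f(x,\cdot)}_{L^s(I)}$. The ball $B=B^2(0,1/4)$ has radius at most $1$, so the ball condition gives $\nu(B)\le \langle\nu\rangle_\alpha (1/4)^\alpha\le\langle\nu\rangle_\alpha$. H\"older's inequality on the finite measure space $(B,\nu)$ then yields
\[
\norm{G}_{L^{\tilde q}(B,\nu)}\le \nu(B)^{\frac1{\tilde q}-\frac1q}\norm{G}_{L^q(B,\nu)}\le \langle\nu\rangle_\alpha^{\frac1{\tilde q}-\frac1q}\cdot C\langle\nu\rangle_\alpha^{\frac1q}\norm{f}_{L^p}=C\langle\nu\rangle_\alpha^{\frac1{\tilde q}}\norm{f}_{L^p}.
\]
This is exactly (\ref{Eq:MixedNormEstimate_0}) with $q$ replaced by $\tilde q$. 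The same computation with $C_\eps\delta^{-\eps}$ in place of $C$ gives the statement for (\ref{Eq:MixedNormEstimate}).

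There is no real obstacle here. The only point that needs care is that the localization to $B$ is what makes $\nu$ finite on the domain of integration, with mass controlled by $\langle\nu\rangle_\alpha$. Without it, the $q$-monotonicity would fail for measures of infinite total mass.
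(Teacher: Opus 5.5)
Your proof is correct and is exactly the argument the paper indicates: H\"older on $I$, which has unit length, for monotonicity in $s$, and H\"older on the finite measure space $(B,\nu)$ with $\nu(B)\le\langle\nu\rangle_\alpha$ for monotonicity in $q$, with the exponent bookkeeping producing $\langle\nu\rangle_\alpha^{1/\tilde q}$. The same computation applies verbatim to \eqref{Eq:MixedNormEstimate} with constant $C_\eps\delta^{-\eps}$.
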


Necessary conditions for both Theorem~\ref{Thm:MainTheorem_0} and \ref{Thm:MainTheorem} will be obtained in Section~\ref{sec:necessary}. Here let us only summarize them as follows:
\begin{proposition}\label{Prop:necessary}
    For \eqref{Eq:MixedNormEstimate_0} and \eqref{Eq:MixedNormEstimate} to hold when $n=2$, we must have
    \begin{align}\label{Eq:necessarys}
        \frac{1}{s} + \frac{\alpha}{q} \geq \frac{1}{p}, \quad 1 + \frac{2}{s} + \frac{\min\{0, \alpha-1\}}{q} \geq \frac{3}{p}, \quad 1 + \frac{1}{s} \geq \frac{2}{p}, \quad 1 + \frac{2}{s} \geq \frac{4-\alpha}{p}, \quad
        \frac{1}{s}\geq \frac{1-\alpha}{p},
    \end{align}
    where the next‑to‑last condition is for $\alpha\in [1,2]$, and the last condition is for $\alpha\in (0,1]$.
\end{proposition}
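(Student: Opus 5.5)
We prove Proposition~\ref{Prop:necessary} by testing \eqref{Eq:MixedNormEstimate} (the $\delta$-discretized form; the same examples with $f$ replaced by smooth bumps adapted to scale $\delta$ give the statements for \eqref{Eq:MixedNormEstimate_0}) against five explicit pairs $(f,\nu)$. Each pair depends on $\delta$. For each we compute $\|f\|_{L^p}$, $\langle\nu\rangle_\alpha$, and a lower bound for the left-hand side. Letting $\delta\to0$ then forces the exponent inequality, because the $\delta^{-\eps}$ loss is arbitrary.

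\textbf{Measures.} Throughout, $\nu$ is normalized so that $\langle\nu\rangle_\alpha\sim1$. The building blocks are:
\begin{itemize}
\item[(a)] the point-mass-like measure $\nu=\delta^{-\alpha}\mathcal L^2|_{B(0,\delta)}$, normalized to total mass $\delta^{\alpha}\cdot\delta^{-\alpha}\cdot$ scaled appropriately, i.e.\ $\nu(B(x,\rho))\lesssim\rho^\alpha$;
\item[(b)] an $\alpha$-dimensional measure on a $\delta$-neighbourhood of a segment (for $\alpha\le 1$), or on a $\delta^{\alpha-1}\times 1$ rectangle (for $\alpha\in[1,2]$), built as $\delta^{\alpha-2}\mathcal L^2$ restricted to a $\delta\times\delta^{\alpha}$-thin piece, or its natural analogues;
\item[(c)] $\nu=\mathcal L^2|_B$ times a constant when $\alpha=2$.
\end{itemize}

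\textbf{Test functions.} We use five of them.
\begin{enumerate}
\item \emph{Annulus near a point.} Take $f=\chi_{S_\delta(0,1)}$ and $\nu$ as in (a), living at scale $\delta$ near the origin. For every $x$ in $\spt\nu$, $\AA_\delta f(x,r)\gtrsim1$ on an $r$-interval of length $\delta$. So the left side is $\gtrsim\delta^{\alpha/q}\delta^{1/s}$, while $\|f\|_p\sim\delta^{1/p}$. This yields $\frac1s+\frac\alpha q\ge\frac1p$.
\item \emph{Small ball (Stein-type example).} Take $f=\chi_{B(0,\delta)}$ and $\nu$ of type (c), or type (b) placed on a set of centers at distance $\sim1$. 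Then $\AA_\delta f(x,r)\sim\delta$ for $r$ within $\delta$ of $|x|$, so the left side is $\sim\delta\cdot\delta^{1/s}$, against $\|f\|_p=\delta^{2/p}$. This gives $1+\frac1s\ge\frac2p$.
\item \emph{Knapp/tangential example.} Take $f$ the indicator of a $\delta\times\delta^{1/2}$ rectangle. The centers $x$ for which $S_\delta(x,r)$ is internally tangent to it, for $r$ in a $\delta$-range, form a $\delta^{1/2}\times\delta$-type set, rotated by $r$. For $\alpha\in[1,2]$ we put $\nu$ on a $\delta^{1/2}$-ball along this set with density $\delta^{\alpha-2}$ on a $\delta$-strip. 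For $\alpha<1$ we take the point measure. Balancing masses gives $1+\frac2s+\frac{\min\{0,\alpha-1\}}q\ge\frac3p$ and $1+\frac2s\ge\frac{4-\alpha}p$.
\item \emph{Circle-bundle / Talagrand-type example for $\alpha\le1$.} Take $f=\chi_{S_\delta(0,1)}$ and let $\nu$ be an $\alpha$-dimensional Cantor-type measure on a short segment of length $\delta^{1/2}$ near the origin. For $x$ at distance $t$ from $0$, the circle $S(x,r)$ with $r\approx1\pm t$ is tangent to $S(0,1)$. This gives $\AA_\delta f(x,r)\gtrsim(\delta/|r-r_x|)^{1/2}$ over a range of $r$. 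Summing over dyadic scales of $|r-r_x|$ and optimizing the dyadic level gives $\frac1s\ge\frac{1-\alpha}p$. This matches the sharpness of Case~(1) at $p=3$.
\end{enumerate}

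\textbf{Main obstacle.} The hard step is choosing, in the tangency examples, a $\nu$ that genuinely satisfies the $\alpha$-ball condition at all scales $\rho\in(\delta,1]$. It must still place enough mass on the centers where the circles are tangent or near-tangent to $\spt f$. My plan is to build $\nu$ as an anisotropic product: a normalized Lebesgue piece at the fine scale, tensored with a rescaled $\alpha$- (or $(\alpha-1)$-) dimensional Cantor measure at coarser scales. I would verify the ball condition separately for $\rho\le\delta$, $\delta\le\rho\le\delta^{1/2}$ and $\rho\ge\delta^{1/2}$. Once the measures are fixed, the remaining lower bounds are routine geometry of $\delta$-annuli.
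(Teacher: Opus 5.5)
Your items 1 and 2 are correct and coincide with the paper's Examples~1 and~3. One fix: normalize (a) as $\delta^{\alpha-2}\mathcal L^2|_{B(0,\delta)}$, which has mass $\sim\delta^\alpha$. The density $\delta^{-\alpha}$ you wrote gives mass $\delta^{2-\alpha}$ and violates the ball condition at $\rho=\delta$ when $\alpha>1$.

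\emph{Knapp example.} This does not give what you claim. You put $\nu$ on one $\delta^{1/2}$-ball (or on the $\delta\times\delta^{1/2}$ set of centres good for a single $\delta$-range of radii), or on a point mass when $\alpha<1$. Then the good centres have $\nu$-mass at most $\delta^{\alpha/2}$, and you only get $1+\frac2s+\frac{\alpha}{q}\ge\frac3p$.

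What you are missing is that in the mixed norm each centre may use its own $\delta$-interval of radii, namely $r$ within $\delta$ of its distance to $R$. So every point of the full $1\times\delta^{1/2}$ tube $\tilde R$ dual to $R$ sees $\mathcal{A}_\delta f\gtrsim\delta^{1/2}$ on an $r$-interval of length $\delta$. The paper takes $\nu=\delta^{\alpha/2-1}\chi_{\tilde R}$ for $\alpha\ge1$; its mass $\delta^{(\alpha-1)/2}$ is the most the $\alpha$-ball condition permits on such a tube. For $\alpha\le1$ it takes $\nu=\delta^{-1/2}\chi_{\tilde R}$, of mass $\sim1$. This gives the coefficient $\max\{0,\alpha-1\}$.

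The $\min\{0,\alpha-1\}$ in the printed statement is a slip, and that version cannot be derived from any example. For $\alpha\in(\frac32,2]$ and $p=q=2$ it would force $s\le4$. That contradicts Theorem~\ref{Thm:MainTheorem_0}(2), where $s$ may approach $\frac{2}{2-\alpha}>4$. Figure~\ref{Fig:ranges} indeed uses the $\max$ version. So your claim that ``balancing masses gives'' the printed condition cannot be right.

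\emph{The $q$-independent conditions.} More seriously, neither $1+\frac2s\ge\frac{4-\alpha}{p}$ nor $\frac1s\ge\frac{1-\alpha}{p}$ can come out of your constructions.

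For $\alpha\in[1,2]$ with one Knapp rectangle, the good centres carry $\nu$-mass $\lesssim\delta^{(\alpha-1)/2}$. So a factor $\delta^{(\alpha-1)/(2q)}$ always survives.

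With one annulus $f=\chi_{S_\delta(0,1)}$, Lemma~\ref{Lem:GeometricLemma}(2) gives $\mathcal{A}_\delta f(x,r)\lesssim\delta/(|r-r_x|+\delta)$, where $r_x=1+|x|$. So your lower bound $(\delta/|r-r_x|)^{1/2}$ is false once $|r-r_x|\gg\delta$. Hence $\|\mathcal{A}_\delta f(x,\cdot)\|_{L^s_r(I)}\lessapprox\delta^{1/s}$ for every $x$. Your $\nu$ lives on a segment of length $\delta^{1/2}$, so its mass is $\lesssim\delta^{\alpha/2}$. The left side is then $\lessapprox\delta^{\frac1s+\frac{\alpha}{2q}}$, and no dyadic optimization yields more than $\frac1s+\frac{\alpha}{2q}\ge\frac1p$. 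At $q=1$, $p=3$ this is strictly weaker than $\frac1s\ge\frac{1-\alpha}{3}$.

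The missing idea is to replicate \emph{both} $f$ and $\nu$, as in the paper's Examples~4 and~5.
\begin{itemize}
\item For $\alpha\in[1,2]$: take $\sim\delta^{(1-\alpha)/2}$ parallel Knapp rectangles spaced $\delta^{(\alpha-1)/2}$ apart, and $\nu=\delta^{\alpha/2-1}$ times the indicator of the union of their dual tubes. Then $\langle\nu\rangle_\alpha\sim1$ and $\nu(B)\sim1$, while $\|f\|_p^p\sim\delta^{(4-\alpha)/2}$.
\item For $\alpha\le1$: take $\sim\delta^{-\alpha}$ points $c_i$ spaced $\delta^{\alpha}$ apart on a line, $\nu=\delta^{\alpha-2}\chi_{\cup_iB(c_i,\delta)}$ and $f=\chi_{\cup_iC_\delta(c_i,1)}$. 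Then $\nu(B)\sim1$ and $\|f\|_p\lesssim\delta^{(1-\alpha)/p}$, while each centre still sees $\mathcal{A}_\delta f\gtrsim1$ on a $\delta$-interval of radii.
\end{itemize}
Your ``main obstacle'' paragraph focuses on building a Cantor-type $\nu$. But a fractal $\nu$ paired with a single $f$ cannot remove the $q$-dependence; it is the simultaneous replication of $f$ that does.
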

By taking $p=q=3$ in the first and last conditions above, we immediately see the sharpness in $s$ of Case (1). In fact, the sharpness holds in the stronger sense that if we fix $p=3$, then the most critical endpoint is given by $q=3$ and $s=\frac{3}{1-\alpha}$ (see Figure~(\subref{fig:first}) and (\subref{fig:last}) in Section~\ref{subsec:compare}). However, if we fix $p=2$, then things are more complicated, and we will discuss this systematically in the upcoming Section~\ref{subsec:compare}. In general, it is natural to conjecture that the ranges in (\ref{Eq:necessarys}) are also sufficient, possibly up to boundaries and endpoints.

As a concluding remark, although our main results only deal with the case $n=2$, we will extend the argument to $n \geq 3$ whenever it remains valid and does not incur additional difficulties (Section~\ref{sec:discretization} and ~\ref{subsec:Slicing}). This is because as motivated in Section~\ref{subsec:History}, weighted mixed-norm estimates for spherical averages $\AA^n$ ($n\geq 3$) will also be of interest, and we hope that this will to some extent lay the foundation for future studies.

\subsection{Comparison with previous results}\label{subsec:compare}
Now let us compare our Theorem~\ref{Thm:MainTheorem_0} with existing results in the literature by fixing $p=2$ or $3$ and drawing the range of $(\frac{1}{q}, \frac{1}{s})$. For this purpose, we need to first apply vector-valued interpolation to reduce those results exactly to our setting. One may consult arguments in Section~\ref{sec:high_freq_decay} for a sense of how to make such interpolation rigorous. In what follows, we only present the conclusions.

First consider the case when $p=3$ and $\alpha\in (0,1]$. Let $P \coloneqq (\frac{1}{3}, \frac{1-\alpha}{3})$. Applying \cite[Theorem~1.3]{HKL22} to $\nu\otimes\mathcal{L}^1|_I$, we know that when $\alpha \in (\frac{1}{5},1]$, (\ref{Eq:MixedNormEstimate_0}) holds for $\frac{1}{q} = \frac{1}{s} > \frac{1}{3\alpha+3}$. Let $H \coloneqq (\frac{1}{3\alpha+3}, \frac{1}{3\alpha+3})$. Also, by interpolating \cite[Theorem~2.3~(a)(b)]{Oberlin15} with the trivial bound for $(\frac{1}{p},\frac{1}{q},\frac{1}{s}) = (0,0,0)$, we know that when $\alpha \in (\frac{1}{2}, 1]$, (\ref{Eq:MixedNormEstimate_0}) holds for $\frac{1}{q}>\frac{1}{3\alpha}$ and $\frac{1}{s} = \frac{1}{6}$; and when $\alpha \in (0,\frac{1}{2})$, (\ref{Eq:MixedNormEstimate_0}) holds for $\frac{1}{q} = \frac{1}{3}$ and $\frac{1}{s} > \frac{1-\alpha}{3}$. Let $O_1 \coloneqq (\frac{1}{3\alpha}, \frac{1}{6})$ and $O_2 \coloneqq (\frac{1}{3}, \frac{1-\alpha}{3})$. By using Fact~\ref{Fact:Holder} and interpolating everything with the trivial bound $(\frac{1}{q}, \frac{1}{s}) = (0, \frac{1}{3})$, we get Figure~(\subref{fig:first}), (\subref{fig:last}). Note that $O_1$ always lies in the orange region when $\alpha \in (\frac{1}{2}, 1]$, and $H$ always lies on the segment connecting $(0,\frac{1}{3})$ and $P$ when $\alpha\in (\frac{1}{5},1]$, so they will never dominate. However, since $P=O_2$, our Theorem~\ref{Thm:MainTheorem_0} recovers the previous best result when $\alpha \in (0,\frac{1}{2})$, and extends the critical vertex $(\frac{1}{3}, \frac{1-\alpha}{3})$ to $\alpha \in [\frac{1}{2}, 1]$.

Next consider the case when $p=2$ and $\alpha \in (1,2]$. Let $P \coloneqq (\frac{1}{2}, \frac{2-\alpha}{2})$. Applying \cite[Theorem~1.3]{HKL22} to $\nu\otimes\mathcal{L}^1|_I$, we know that when $\alpha \in (1,2]$, (\ref{Eq:MixedNormEstimate_0}) holds for $\frac{1}{q} = \frac{1}{s} > \frac{1}{4\alpha-2}$. Let $H \coloneqq (\frac{1}{4\alpha-2}, \frac{1}{4\alpha-2})$. 
Also, by interpolating \cite[Theorem~2.3~(a)]{Oberlin15} with the trivial bound for $(\frac{1}{p},\frac{1}{q},\frac{1}{s}) = (0,0,0)$, we know that when $\alpha \in (1, 2]$, (\ref{Eq:MixedNormEstimate_0}) holds for any $(\frac{1}{q},\frac{1}{s})$ lying strictly above the open segment connecting $(0,\frac{1}{2})$ and $O \coloneqq (\frac{1}{2\alpha}, \frac{1}{4})$. By using Fact~\ref{Fact:Holder} and interpolating everything with the trivial bound $(\frac{1}{q}, \frac{1}{s}) = (0, \frac{1}{2})$, we get Figure~(\subref{fig:left}), (\subref{fig:middle}), (\subref{fig:right}). The relative positions of $P,H,O$ depend on $\alpha$: $O$ dominates when $\alpha \in (1,\frac{4}{3}]$, $H$ and $O$ dominate when $\alpha \in (\frac{4}{3}, \frac{3}{2}]$, while $H$ and $P$ dominate when $\alpha \in (\frac{2}{3}, 2]$. 

\begin{remark}
    In Figure~\ref{Fig:ranges}, the admissible ranges are the colored (orange, pink, blue) regions together with the solid lines and filled dots. The dashed lines and open dots on the boundary cannot be reached. The colors of the subregions are assigned according to the historical order: The orange comes from $O$ ($O_1$, $O_2$), the pink is the expansion made by $H$, and the blue represents the new improvement made by our $P$.
\end{remark}

Finally, by taking $p=3$ in Proposition~\ref{Prop:necessary}, we see that our point $P$ determines the sharp ranges in both Figure~(\subref{fig:first}) and (\subref{fig:last}). Also, by taking $p=2$ therein, we can depict the conjectured ranges for $(\frac{1}{q},\frac{1}{s})$ in Figures~(\subref{fig:left}), (\subref{fig:middle}), (\subref{fig:right}). More precisely, they are the upper-right regions enclosed by the gray polygonal lines. The critical conjectured nodes are $Q_1 \coloneqq (\frac{1}{2}, \frac{2-\alpha}{4})$, and $Q_2 \coloneqq (\frac{1}{2\alpha+2}, \frac{1}{2\alpha+2})$. It is interesting to note that $Q_1$ is the midpoint between $(\frac{1}{2},0)$ and $P$, and $Q_2$ lies on the diagonal (as $H$ does). The former fact provides another strong motivation to focus on $p=q=2$. It is also worth pointing out that if the full conjectured ranges for weighted circular averaging estimates of the form (\ref{Eq:weighted_form}) in \cite{HKL22} were known, then by applying it to $\nu \otimes \mathcal{L}^1|_I$ as before, we would essentially get the diagonal points $(\frac{1}{3\alpha+3}, \frac{1}{3\alpha+3})$ for $\alpha \in (0,1]$ and
$(\frac{1}{4\alpha-2}, \frac{1}{4\alpha-2})$ for $\alpha \in (1,2]$. The former matches $H$, which is already known, while the latter is worse than $Q_2$ whenever $\alpha<2$. This somehow indicates that even in the $q=s$ regime (recall Item (iv) in Section~\ref{minorsec:weighted_mixed_norm}), one still needs to exploit the ``product structure'' to get $Q_2$. We do not pursue this direction, since $Q_1$ is always more useful than it in all the known applications (recall Item (v) in Section~\ref{minorsec:weighted_mixed_norm}).

We summarize what we get in the following theorem:
\begin{theorem}\label{Thm:MainTheorem_00}
    Fix $n=2$. If $p=3$ and $\alpha\in(0,1]$, then {\rm (\ref{Eq:MixedNormEstimate_0})} holds for all $(\frac{1}{q}, \frac{1}{s})$ in the admissible ranges illustrated by Figure~{\rm(\subref{fig:first})} and {\rm(\subref{fig:last})}. If $p=2$ and $\alpha\in(1,2]$, then {\rm (\ref{Eq:MixedNormEstimate_0})} holds for all $(\frac{1}{q}, \frac{1}{s})$ in the admissible ranges illustrated by Figure~{\rm(\subref{fig:left})}, {\rm(\subref{fig:middle})} and {\rm(\subref{fig:right})}.
\end{theorem}

\begin{figure}[htbp]
\centering

\begin{subfigure}{0.45\textwidth}
\centering

\begin{tikzpicture}[scale=5,>=stealth]

\pgfmathsetmacro{\myalpha}{2/5}
\pgfmathsetmacro{\xH}{1/(3*\myalpha+3)}
\pgfmathsetmacro{\yH}{1/(3*\myalpha+3)}
\pgfmathsetmacro{\xP}{1/3}
\pgfmathsetmacro{\yP}{(1-\myalpha)/3}

\draw[->] (0,0) -- (1.1,0) node[below] {$\frac{1}{q}$};
\draw[->] (0,0) -- (0,1.1) node[left] {$\frac{1}{s}$};

\foreach \x in {\xH,\xP,1} {
\draw[line width=1pt] (\x,0.5pt) -- (\x,-0.5pt);
}
\foreach \y in {\yH,\yP} {
\draw[line width=1pt] (0.5pt,\y) -- (-0.5pt,\y);
}

\coordinate (H) at (\xH,\yH);
\coordinate (P) at (\xP,\yP);
\fill[orange!20] (0,1) -- (1,1) -- (1,\yP) -- (P) -- (0,1/3) -- cycle;
\draw[black, thick] (0,1/3) -- (0,1) -- (1,1) -- (1,\yP);

\node[circle, fill=black, inner sep=1pt] at (0,1/3) {};
\node[circle, fill=black, inner sep=1pt] at (0,1) {};
\node[circle, fill=black, inner sep=1pt] at (1,1) {};

\node[circle, draw=purple, thick, inner sep=1pt] at (H) {};
\node[circle, draw=blue, thick, inner sep=1pt] at (P) {};
\node[circle, draw=black, thick, inner sep=1pt] at (1,\yP) {};

\node[above,yshift=-1pt] at (H) {\color{purple} $H$}; 
\node[above right, xshift=-2pt, yshift=-2pt] at (P) {{\color{blue}$P$}$=${\color{red} $O_2$}}; 

\node[below, xshift=0pt] at (\xH,0) {\color{purple} \tiny $\frac{1}{3\alpha+3}$};
\node[below] at (\xP,0) {\color{blue} \tiny $\frac{1}{3}$};

\node[left, yshift=1pt] at (0,\yH) {\color{purple} \tiny $1/(3\alpha+3)$};
\node[left, yshift=-2pt] at (0,\yP) {\color{blue} \tiny $(1-\alpha)/3$};

\node[below left, xshift=2pt, yshift=2pt] at (0,0) {\tiny $0$};
\node[below] at (1,0) {\tiny $1$};
\node[left] at (0,1/3) {\tiny $1/3$};
\node[left] at (0,1) {\tiny $1$};

\draw[dashed] (0,0) -- (1,1);
\draw[dashed] (1,0) -- (1,\yP);
\draw[dashed] (H) -- (\xH,0);
\draw[dashed] (H) -- (0,\yH);
\draw[dashed] (0,1/3) -- (1/3,1/3) -- (P) -- (\xP,0);
\draw[dashed] (P) -- (0,\yP);

\draw[dashed, black, thick] (0,1/3) -- (H) -- (P) -- (1,\yP);

\end{tikzpicture}

\caption{$\alpha \in (0, \frac{1}{2})$}
\label{fig:first}
\end{subfigure}
\hfill
\begin{subfigure}{0.45\textwidth}
\centering

\begin{tikzpicture}[scale=5,>=stealth]

\pgfmathsetmacro{\myalpha}{2/3}
\pgfmathsetmacro{\xO}{1/(3*\myalpha)}
\pgfmathsetmacro{\yO}{1/6}
\pgfmathsetmacro{\xH}{1/(3*\myalpha+3)}
\pgfmathsetmacro{\yH}{1/(3*\myalpha+3)}
\pgfmathsetmacro{\xP}{1/3}
\pgfmathsetmacro{\yP}{(1-\myalpha)/3}

\draw[->] (0,0) -- (1.1,0) node[below] {$\frac{1}{q}$};
\draw[->] (0,0) -- (0,1.1) node[left] {$\frac{1}{s}$};

\foreach \x in {\xO,\xH,\xP,1} {
\draw[line width=1pt] (\x,0.5pt) -- (\x,-0.5pt);
}
\foreach \y in {\yO,\yH,\yP} {
\draw[line width=1pt] (0.5pt,\y) -- (-0.5pt,\y);
}

\coordinate (O) at (\xO,\yO);
\coordinate (H) at (\xH,\yH);
\coordinate (P) at (\xP,\yP);
\fill[orange!20] (O) -- (0,1/3) -- (0,1) -- (1,1) -- (1,\yO) -- cycle;
\fill[pink!80] (H) -- (0,1/3) -- (O) -- cycle;
\fill[blue!20] (P) -- (H) -- (O) -- (1,\yO) -- (1,\yP) -- cycle;
\draw[black, thick] (0,1/3) -- (0,1) -- (1,1) -- (1,\yP);

\node[circle, fill=black, inner sep=1pt] at (0,1/3) {};
\node[circle, fill=black, inner sep=1pt] at (0,1) {};
\node[circle, fill=black, inner sep=1pt] at (1,1) {};

\node[circle, draw=red, thick, inner sep=1pt] at (O) {};
\node[circle, draw=purple, thick, inner sep=1pt] at (H) {};
\node[circle, draw=blue, thick, inner sep=1pt] at (P) {};
\node[circle, draw=black, thick, inner sep=1pt] at (1,\yP) {};

\node[above right, xshift=-3pt, yshift=-3pt] at (O) {\color{red} $O_1$}; 
\node[above,yshift=-1pt] at (H) {\color{purple} $H$}; 
\node[above right, xshift=-2pt, yshift=-2pt] at (P) {\color{blue} $P$}; 

\node[below, xshift=0pt] at (\xO,0) {\color{red} \tiny $\frac{1}{3\alpha}$};
\node[below, xshift=0pt] at (\xH,0) {\color{purple} \tiny $\frac{1}{3\alpha+3}$};
\node[below] at (\xP,0) {\color{blue} \tiny $\frac{1}{3}$};

\node[left, yshift=-1pt] at (0,\yO) {\color{red} \tiny $1/6$};
\node[left, yshift=1pt] at (0,\yH) {\color{purple} \tiny $1/(3\alpha+3)$};
\node[left] at (0,\yP) {\color{blue} \tiny $(1-\alpha)/3$};

\node[below left, xshift=2pt, yshift=2pt] at (0,0) {\tiny $0$};
\node[below] at (1,0) {\tiny $1$};
\node[left] at (0,1/3) {\tiny $1/3$};
\node[left] at (0,1) {\tiny $1$};

\draw[dashed] (0,0) -- (1,1);
\draw[dashed] (1,0) -- (1,\yP);
\draw[dashed] (O) -- (\xO,0);
\draw[dashed] (O) -- (0,\yO);
\draw[dashed] (H) -- (\xH,0);
\draw[dashed] (H) -- (0,\yH);
\draw[dashed] (0,1/3) -- (1/3,1/3) -- (P) -- (\xP,0);
\draw[dashed] (P) -- (0,\yP);

\draw[dashed, black, thick] (0,1/3) -- (H) -- (P) -- (1,\yP);

\end{tikzpicture}

\caption{$\alpha \in [\frac{1}{2}, 1]$}
\label{fig:last}
\end{subfigure}

\begin{subfigure}{0.45\textwidth}
\centering

\begin{tikzpicture}[scale=5,>=stealth]

\pgfmathsetmacro{\myalpha}{1.2}
\pgfmathsetmacro{\xO}{1/(2*\myalpha)}
\pgfmathsetmacro{\yO}{1/4}
\pgfmathsetmacro{\xH}{1/(4*\myalpha-2)}
\pgfmathsetmacro{\yH}{1/(4*\myalpha-2)}
\pgfmathsetmacro{\xP}{1/2}
\pgfmathsetmacro{\yP}{(2-\myalpha)/2}
\pgfmathsetmacro{\xQi}{\xP}
\pgfmathsetmacro{\yQi}{\yP/2}
\pgfmathsetmacro{\xQii}{1/(2*\myalpha+2)}
\pgfmathsetmacro{\yQii}{\xQii}

\draw[->] (0,0) -- (1.1,0) node[below] {$\frac{1}{q}$};
\draw[->] (0,0) -- (0,1.1) node[left] {$\frac{1}{s}$};

\foreach \x in {\xO,\xH,\xP,1} {
\draw[line width=1pt] (\x,0.5pt) -- (\x,-0.5pt);
}
\foreach \y in {\yO,\yH,\yP} {
\draw[line width=1pt] (0.5pt,\y) -- (-0.5pt,\y);
}

\coordinate (O) at (\xO,\yO);
\coordinate (H) at (\xH,\yH);
\coordinate (P) at (\xP,\yP);
\coordinate (Qi) at (\xQi,\yQi);
\coordinate (Qii) at (\xQii,\yQii);
\fill[orange!20] (0,1) -- (1,1) -- (1,\yO) -- (O) -- (0,1/2) -- cycle;
\draw[black, thick] (0,1/2) -- (0,1) -- (1,1) -- (1,\yO);

\node[circle, fill=black, inner sep=1pt] at (0,1/2) {};
\node[circle, fill=black, inner sep=1pt] at (0,1) {};
\node[circle, fill=black, inner sep=1pt] at (1,1) {};

\node[circle, draw=red, thick, inner sep=1pt] at (O) {};
\node[circle, draw=purple, thick, inner sep=1pt] at (H) {};
\node[circle, draw=blue, thick, inner sep=1pt] at (P) {};
\node[circle, draw=black, thick, inner sep=1pt] at (1,\yO) {};

\node[circle, draw=gray, thick, inner sep=1pt] at (Qi) {};
\node[circle, draw=gray, thick, inner sep=1pt] at (Qii) {};

\node[above, xshift=2pt] at (O) {\color{red} $O$}; 
\node[above,yshift=-1pt] at (H) {\color{purple} $H$}; 
\node[right, xshift=-2pt] at (P) {\color{blue} $P$}; 
\node[below right, xshift=-3pt, yshift=2pt] at (Qi) {\color{gray} $Q_1$}; 
\node[below, xshift=-2pt, yshift=1pt] at (Qii) {\color{gray} $Q_2$}; 

\node[below] at (\xO,0) {\color{red} \tiny $\frac{1}{2\alpha}$};
\node[below, xshift=-8pt] at (\xH,0) {\color{purple} \tiny $\frac{1}{4\alpha-2}$};
\node[below] at (\xP,0) {\color{blue} \tiny $\frac{1}{2}$};

\node[left] at (0,\yO) {\color{red} \tiny $1/4$};
\node[left,yshift=-2pt] at (0,\yH) {\color{purple} \tiny $1/(4\alpha-2)$};
\node[left, yshift=2pt] at (0,\yP) {\color{blue} \tiny $(2-\alpha)/2$};

\node[below left, xshift=2pt, yshift=2pt] at (0,0) {\tiny $0$};
\node[below] at (1,0) {\tiny $1$};
\node[left] at (0,1/2) {\tiny $1/2$};
\node[left] at (0,1) {\tiny $1$};

\draw[dashed] (1,\yO) -- (1,0);
\draw[dashed] (0,0) -- (1,1);
\draw[dashed] (O) -- (\xO,0);
\draw[dashed] (O) -- (0,\yO);
\draw[dashed] (H) -- (\xH,0);
\draw[dashed] (H) -- (0,\yH);
\draw[dashed] (0,1/2) -- (1/2,1/2) -- (P) -- (\xP,0);
\draw[dashed] (P) -- (0,\yP);

\draw[dashed, black, thick] (0,1/2) -- (O) -- (1,\yO);

\pgfmathsetmacro{\kI}{-(\myalpha-1)/2}
\pgfmathsetmacro{\b}{1/(2*\myalpha+2)}

\draw[gray, thick, domain=\b:1/2] plot (\x, {\kI*\x+1/4});

\draw[gray, dashed, domain=0:\b] plot (\x, {\kI*\x+1/4});

\draw[gray, thick, domain=0:\b] plot (\x, {-\myalpha*\x+1/2});

\draw[gray, dashed, domain=\b:\xO] plot (\x, {-\myalpha*\x+1/2});

\draw[gray, thick, domain=1/2:1] plot (\x, \yP/2);

\end{tikzpicture}

\caption{$\alpha \in (1, \frac{4}{3}]$}
\label{fig:left}
\end{subfigure}
\hfill
\begin{subfigure}{0.45\textwidth}
\centering

\begin{tikzpicture}[scale=5,>=stealth]

\pgfmathsetmacro{\myalpha}{1.4}
\pgfmathsetmacro{\xO}{1/(2*\myalpha)}
\pgfmathsetmacro{\yO}{1/4}
\pgfmathsetmacro{\xH}{1/(4*\myalpha-2)}
\pgfmathsetmacro{\yH}{1/(4*\myalpha-2)}
\pgfmathsetmacro{\xP}{1/2}
\pgfmathsetmacro{\yP}{(2-\myalpha)/2}
\pgfmathsetmacro{\xQi}{\xP}
\pgfmathsetmacro{\yQi}{\yP/2}
\pgfmathsetmacro{\xQii}{1/(2*\myalpha+2)}
\pgfmathsetmacro{\yQii}{\xQii}

\draw[->] (0,0) -- (1.1,0) node[below] {$\frac{1}{q}$};
\draw[->] (0,0) -- (0,1.1) node[left] {$\frac{1}{s}$};

\foreach \x in {\xO,\xH,\xP,1} {
\draw[line width=1pt] (\x,0.5pt) -- (\x,-0.5pt);
}
\foreach \y in {\yO,\yH,\yP} {
\draw[line width=1pt] (0.5pt,\y) -- (-0.5pt,\y);
}

\coordinate (O) at (\xO,\yO);
\coordinate (H) at (\xH,\yH);
\coordinate (P) at (\xP,\yP);
\coordinate (Qi) at (\xQi,\yQi);
\coordinate (Qii) at (\xQii,\yQii);
\fill[orange!20] (0,1) -- (1,1) -- (1,\yO) -- (O) -- (0,1/2) -- cycle;
\fill[pink!80] (O) -- (H) -- (0,1/2) -- cycle;
\draw[black, thick] (0,1/2) -- (0,1) -- (1,1) -- (1,\yO);
\draw[red!60] (0,1/2) -- (O);

\node[circle, fill=black, inner sep=1pt] at (0,1/2) {};
\node[circle, fill=black, inner sep=1pt] at (0,1) {};
\node[circle, fill=black, inner sep=1pt] at (1,1) {};

\node[circle, draw=red, thick, inner sep=1pt] at (O) {};
\node[circle, draw=purple, thick, inner sep=1pt] at (H) {};
\node[circle, draw=blue, thick, inner sep=1pt] at (P) {};
\node[circle, draw=black, thick, inner sep=1pt] at (1,\yO) {};

\node[circle, draw=gray, thick, inner sep=1pt] at (Qi) {};
\node[circle, draw=gray, thick, inner sep=1pt] at (Qii) {};

\node[above, xshift=2pt] at (O) {\color{red} $O$}; 
\node[above,yshift=-1pt] at (H) {\color{purple} $H$}; 
\node[right, xshift=-2pt] at (P) {\color{blue} $P$}; 
\node[below right, xshift=-3pt, yshift=2pt] at (Qi) {\color{gray} $Q_1$}; 
\node[below, xshift=-2pt, yshift=1pt] at (Qii) {\color{gray} $Q_2$}; 

\node[below] at (\xO,0) {\color{red} \tiny $\frac{1}{2\alpha}$};
\node[below, xshift=-5pt] at (\xH,0) {\color{purple} \tiny $\frac{1}{4\alpha-2}$};
\node[below] at (\xP,0) {\color{blue} \tiny $\frac{1}{2}$};

\node[left, yshift=-5pt] at (0,\yO) {\color{red} \tiny $1/4$};
\node[left,yshift=-1pt] at (0,\yH) {\color{purple} \tiny $1/(4\alpha-2)$};
\node[left, yshift=3pt] at (0,\yP) {\color{blue} \tiny $(2-\alpha)/2$};

\node[below left, xshift=2pt, yshift=2pt] at (0,0) {\tiny $0$};
\node[below] at (1,0) {\tiny $1$};
\node[left] at (0,1/2) {\tiny $1/2$};
\node[left] at (0,1) {\tiny $1$};

\draw[dashed] (1,\yO) -- (1,0);
\draw[dashed] (0,0) -- (1,1);

\draw[dashed] (O) -- (\xO,0);
\draw[dashed] (O) -- (0,\yO);
\draw[dashed] (H) -- (\xH,0);
\draw[dashed] (H) -- (0,\yH);
\draw[dashed] (0,1/2) -- (1/2,1/2) -- (P) -- (\xP,0);
\draw[dashed] (P) -- (0,\yP);

\draw[dashed, black, thick] (0,1/2) -- (H) --(O) -- (1,\yO);

\pgfmathsetmacro{\kI}{-(\myalpha-1)/2}
\pgfmathsetmacro{\b}{1/(2*\myalpha+2)}

\draw[gray, thick, domain=\b:1/2] plot (\x, {\kI*\x+1/4});

\draw[gray, dashed, domain=0:\b] plot (\x, {\kI*\x+1/4});

\draw[gray, thick, domain=0:\b] plot (\x, {-\myalpha*\x+1/2});

\draw[gray, dashed, domain=\b:\xO] plot (\x, {-\myalpha*\x+1/2});

\draw[gray, thick, domain=1/2:1] plot (\x, \yP/2);

\end{tikzpicture}

\caption{$\alpha \in (\frac{4}{3},\frac{3}{2}]$}
\label{fig:middle}
\end{subfigure}

\begin{subfigure}{0.45\textwidth}
\centering

\begin{tikzpicture}[scale=5,>=stealth]

\pgfmathsetmacro{\myalpha}{1.7}
\pgfmathsetmacro{\xO}{1/(2*\myalpha)}
\pgfmathsetmacro{\yO}{1/4}
\pgfmathsetmacro{\xH}{1/(4*\myalpha-2)}
\pgfmathsetmacro{\yH}{1/(4*\myalpha-2)}
\pgfmathsetmacro{\xP}{1/2}
\pgfmathsetmacro{\yP}{(2-\myalpha)/2}
\pgfmathsetmacro{\xQi}{\xP}
\pgfmathsetmacro{\yQi}{\yP/2}
\pgfmathsetmacro{\xQii}{1/(2*\myalpha+2)}
\pgfmathsetmacro{\yQii}{\xQii}

\draw[->] (0,0) -- (1.1,0) node[below] {$\frac{1}{q}$};
\draw[->] (0,0) -- (0,1.1) node[left] {$\frac{1}{s}$};

\foreach \x in {\xO,\xH,\xP,1} {
\draw[line width=1pt] (\x,0.5pt) -- (\x,-0.5pt);
}
\foreach \y in {\yO,\yH,\yP} {
\draw[line width=1pt] (0.5pt,\y) -- (-0.5pt,\y);
}

\coordinate (O) at (\xO,\yO);
\coordinate (H) at (\xH,\yH);
\coordinate (P) at (\xP,\yP);
\coordinate (Qi) at (\xQi,\yQi);
\coordinate (Qii) at (\xQii,\yQii);
\fill[orange!20] (0,1) -- (1,1) -- (1,\yO) -- (O) -- (0,1/2) -- cycle;
\fill[pink!80] (H) -- (0,1/2) -- (O) -- (1,\yO) -- (1,\yH) -- cycle;
\fill[blue!20] (P) -- (H) -- (1,\yH) -- (1,\yP) -- cycle;
\draw[black, thick] (0,1/2) -- (0,1) -- (1,1) -- (1,\yP);

\node[circle, fill=black, inner sep=1pt] at (0,1/2) {};
\node[circle, fill=black, inner sep=1pt] at (0,1) {};
\node[circle, fill=black, inner sep=1pt] at (1,1) {};

\node[circle, draw=red, thick, inner sep=1pt] at (O) {};
\node[circle, draw=purple, thick, inner sep=1pt] at (H) {};
\node[circle, draw=blue, thick, inner sep=1pt] at (P) {};
\node[circle, draw=black, thick, inner sep=1pt] at (1,\yP) {};

\node[circle, draw=gray, thick, inner sep=1pt] at (Qi) {};
\node[circle, draw=gray, thick, inner sep=1pt] at (Qii) {};

\node[above, xshift=2pt] at (O) {\color{red} $O$}; 
\node[above,yshift=-1pt] at (H) {\color{purple} $H$}; 
\node[above right, xshift=-2pt, yshift=-2pt] at (P) {\color{blue} $P$}; 
\node[below right, xshift=-3pt, yshift=2pt] at (Qi) {\color{gray} $Q_1$}; 
\node[below, xshift=-2pt, yshift=1pt] at (Qii) {\color{gray} $Q_2$}; 

\node[below] at (\xO,0) {\color{red} \tiny $\frac{1}{2\alpha}$};
\node[below, xshift=-4pt] at (\xH,0) {\color{purple} \tiny $\frac{1}{4\alpha-2}$};
\node[below] at (\xP,0) {\color{blue} \tiny $\frac{1}{2}$};

\node[left, yshift=2pt] at (0,\yO) {\color{red} \tiny $1/4$};
\node[left,yshift=0pt] at (0,\yH) {\color{purple} \tiny $1/(4\alpha-2)$};
\node[left, yshift=0pt] at (0,\yP) {\color{blue} \tiny $(2-\alpha)/2$};

\node[below left, xshift=2pt, yshift=2pt] at (0,0) {\tiny $0$};
\node[below] at (1,0) {\tiny $1$};
\node[left] at (0,1/2) {\tiny $1/2$};
\node[left] at (0,1) {\tiny $1$};

\draw[dashed] (1,\yO) -- (1,0);
\draw[dashed] (0,0) -- (1,1);

\draw[dashed] (O) -- (\xO,0);
\draw[dashed] (O) -- (0,\yO);
\draw[dashed] (H) -- (\xH,0);
\draw[dashed] (H) -- (0,\yH);
\draw[dashed] (0,1/2) -- (1/2,1/2) -- (P) -- (\xP,0);
\draw[dashed] (P) -- (0,\yP);

\draw[dashed, black, thick] (0, 1/2) -- (H) -- (P) -- (1,\yP);

\pgfmathsetmacro{\kI}{-(\myalpha-1)/2}
\pgfmathsetmacro{\b}{1/(2*\myalpha+2)}
\pgfmathsetmacro{\bI}{1/(2*\myalpha-2)}

\draw[gray, thick, domain=\b:1/2] plot (\x, {\kI*\x+1/4});

\draw[gray, dashed, domain=0:\b] plot (\x, {\kI*\x+1/4});

\draw[gray, thick, domain=0:\b] plot (\x, {-\myalpha*\x+1/2});

\draw[gray, dashed, domain=\b:\xO] plot (\x, {-\myalpha*\x+1/2});

\draw[gray, thick, domain=1/2:1] plot (\x, \yP/2);

\end{tikzpicture}

\caption{$\alpha \in (\frac{3}{2}, 2]$}
\label{fig:right}
\end{subfigure}

\caption{(\subref{fig:first}), (\subref{fig:last}) are for $p=3$, while (\subref{fig:left}), (\subref{fig:middle}), (\subref{fig:right}) are for $p=2$.}
\label{Fig:ranges}
\end{figure}

\subsection{Applications}\label{subsec:apply}
We may look at Theorem~\ref{Thm:MainTheorem_0} from several other perspectives. As we will see in Section~\ref{sec:discretization}, it corresponds to the $\delta$-discretized version of the planar incidence problem: For circles taking centers in a fractal set with multiple
radii at each center, how much do they overlap? This is the typical geometric picture for X-ray--type estimates and requires a more delicate understanding of the incidence geometry of circles (see Item (c) in Section~\ref{subsec:add_background}). Theorem~\ref{Thm:MainTheorem_0} can be regarded as a way of quantifying the extent to which the circles overlap, i.e., the non-concentration property of the circles. 

In integral geometry, if $f\in L^p(\R^2)$, then clearly for every $x\in\R^2$, we have $\AA f(x,\cdot) \in L_r^p[1,2]$ by Fubini's theorem. Due to the intrinsic curvature properties of $\AA$, one might expect improved generic integrability (i.e., $\AA f(x,\cdot) \in L_r^s[1,2]$ for some $s>p$) outside of an exceptional set of $x$. A natural question is to bound the Hausdorff dimension of $D(f,s) \coloneqq \{x\in\R^2: \AA f(x,\cdot) \not\in L_r^s[1,2] \}$. This is what we anticipated in Item (vii) of Section~\ref{minorsec:weighted_mixed_norm}. Our Theorem~\ref{Thm:MainTheorem_0} has the following immediate corollary:
\begin{corollary}\label{Cor:integral_geometry}
    If $f \in L_{\rm loc}^{3+}(\R^2)$, then $\dim_H D(f,s) \leq 1-\frac{3}{s}$ for any $s\in (3,\infty]$. If $f \in L_{\rm loc}^{2+}(\R^2)$, then $\dim_H D(f,s) \leq 2-\frac{2}{s}$ for any $s\in (4, \infty]$.  
\end{corollary}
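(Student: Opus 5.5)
We argue by contradiction via Frostman's lemma, using Theorem~\ref{Thm:MainTheorem_0} as the only analytic input. We treat the case $f\in L^{p}_{\rm loc}$ with $p=3+$, $s\in(3,\infty]$; the case $p=2+$, $s\in(4,\infty]$ is identical with $(3,1-\frac3s)$ replaced by $(2,2-\frac2s)$. Since $\AA f(x,\cdot)$ on $I=[1,2]$ only depends on $f$ in $B^2(x,2)$, and $D(f,s)$ is a countable union of its intersections with translates of $B$, it suffices (by translation invariance and countable stability of $\dim_H$) to show $\dim_H (D(f,s)\cap B)\le 1-\frac3s$ for $f\in L^{p}(\R^2)$ supported in $B^2(0,3)$. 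Note that because $f\in L^{3+}$ we may apply the estimate with some $p_0\in(3,p)$ close to $3$ after using that $f\in L^{3}$ locally; more precisely, we will in fact only use $p=3$, and the extra room ``$3+$'' is not needed except perhaps for measurability/the case $s=\infty$ below.

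Suppose $\dim_H (D(f,s)\cap B)>\beta>1-\frac3s$ (so $\beta\in(0,1]$). The set $D(f,s)\cap B$ is Borel (the map $x\mapsto\|\AA f(x,\cdot)\|_{L^s(I)}$ is a supremum of measurable functions, by truncating $f$ and using lower semicontinuity/monotone convergence), so by Frostman's lemma there is a nonzero $\nu\in\mathfrak C^2(\beta)$ with $\spt\nu\subset D(f,s)\cap B$ compact. Now $\beta>1-\frac3s$ means $s<\frac{3}{1-\beta}$, and for $s=\infty$ it means $\beta=1$ is forbidden unless we pick $\beta<1$ with any finite $s'$; so for finite $s$ Case (1) of Theorem~\ref{Thm:MainTheorem_0} with $\alpha=\beta$ gives
\[
\int_B \|\AA f(x,\cdot)\|_{L^s(I)}^3\,\dd\nu(x)\le C^3\langle\nu\rangle_\beta\|f\|_{L^3(\R^2)}^3<\infty .
\]
Hence $\|\AA f(x,\cdot)\|_{L^s(I)}<\infty$ for $\nu$-a.e. $x$, contradicting $\nu(D(f,s))=\nu(\R^2)>0$.

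For $s=\infty$ the claimed bound is $\dim_H D(f,\infty)\le 1$; since $D(f,\infty)\subseteq D(f,s')$ for every finite $s'$, we get $\dim_H D(f,\infty)\le 1-\frac{3}{s'}$ for all $s'>3$ and letting $s'\to\infty$ gives the claim. Similarly for the second statement, Case (2) with $\alpha=\beta\in(1,2]$ requires $s<\frac{2}{2-\beta}$, i.e. $\beta>2-\frac2s$, and the restriction $s>4$ is exactly what guarantees $2-\frac2s>\frac32>1$, so that $\beta$ can be chosen in $(1,2]$ (indeed any $s>2$ would keep $\beta>1$; the stated range is more than enough). The only mildly delicate step is the measurability of $D(f,s)$ needed for Frostman's lemma; if that is awkward one instead uses that $\dim_H$ of an arbitrary set is detected by Frostman measures on compact subsets of a Borel (in fact $G_\delta$-type) superset obtained by taking $f$ Borel and noting $x\mapsto \|\AA f(x,\cdot)\|_{L^s}$ is Borel via Tonelli. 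Everything else is a direct application of Theorem~\ref{Thm:MainTheorem_0}.
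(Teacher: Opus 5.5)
\textbf{The gap is the endpoint $s=\infty$ of the first statement.} The inclusion you use is backwards. Since $L^\infty(I)\subseteq L^{s'}(I)$, a function not in $L^{s'}(I)$ is also not in $L^\infty(I)$. Hence $D(f,s')\subseteq D(f,\infty)$, and the bounds $\dim_H D(f,s')\le 1-\frac{3}{s'}$ say nothing about $D(f,\infty)$; think of $\AA f(x,\cdot)$ with a logarithmic singularity in $r$.

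Theorem~\ref{Thm:MainTheorem_0} cannot reach this endpoint either. Neither of its cases, nor any interpolation with trivial bounds, produces an $L^\infty_r$ estimate. The claim $\dim_H D(f,\infty)\le 1$ needs a different input: a weighted Wolff-type circular maximal bound $\|\sup_{r\in I}|\AA f(\cdot,r)|\|_{L^p(\nu)}\lesssim\langle\nu\rangle_\alpha^{1/p}\|f\|_{L^p(\R^2)}$ for $\alpha>1$ and $p>3$. This comes from \cite[Theorem~1.13]{CDK25} with the linearization \cite[Lemma~3.2]{HKL22}, which is what the paper invokes at the endpoint $\alpha=1$ in the proof of Theorem~\ref{Thm:wave_eqn}.

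With that bound the argument closes as follows. If $\dim_H(D(f,\infty)\cap B)>1$, pick $\alpha\in(1,\dim_H(D(f,\infty)\cap B))$ strictly above $1$. This is necessary, because by Talagrand's example no such maximal bound holds for general $1$-dimensional weights. Take a Frostman measure $\nu\in\mathfrak C^2(\alpha)$ on that set, and apply the maximal bound with some $p>3$ for which $f\in L^p$ on the relevant compact set. This is exactly where the hypothesis $f\in L^{3+}_{\rm loc}$, rather than $L^{3}_{\rm loc}$, is used, contrary to your remark that the extra room is not needed. For the second statement, $s=\infty$ is trivial, since $\dim_H D(f,\infty)\le 2$ always.

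\textbf{The finite-$s$ part is correct.} It is slightly more economical than the route the paper indicates. The paper omits the proof and refers to the proof of Theorem~\ref{Thm:wave_eqn}, where the Frostman exponent is taken exactly at the threshold ($\alpha=1-\frac3s$). The endpoint $s=\frac{3}{1-\alpha}$ is then reached at $p=q>3$ by interpolating Theorem~\ref{Thm:MainTheorem_0} with the trivial bound. You instead choose the exponent strictly between the threshold and the dimension. Then the open range of $s$ in Theorem~\ref{Thm:MainTheorem_0} suffices at $p=q=3$ (resp.\ $p=q=2$), so for finite $s$ the hypotheses $L^3_{\rm loc}$ (resp.\ $L^2_{\rm loc}$) would already do.

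One small point: in the first case you must also choose the exponent $\le 1$. This is possible because $1-\frac3s<1$, but it does not follow automatically from $\dim_H(D(f,s)\cap B)>\beta$ as you wrote it.
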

\begin{remark}
    If the conjectured ranges in Proposition~\ref{Prop:necessary} were true for $p=2$, then we would have $\dim_H D(f,s) \leq 2-\frac{4}{s}$ for $f \in L_{\rm loc}^{2+}(\R^2)$ and any $s\in(4,\infty]$.
\end{remark}
Throughout the paper, by ``$g \in L_{\rm loc}^{p_0+}(\R^n)$'', we mean that for any compact domain $\Omega \subseteq \R^n$, there exists some $p>p_0$ depending only on $\Omega$ such that $g \in L^p(\Omega)$. We need slightly better local integrability because the ranges of $s$ in Theorem~\ref{Thm:MainTheorem_0} has open right endpoint. The proof of Corollary~\ref{Cor:integral_geometry} relies on real analysis arguments very similar to those in Section~\ref{subsec:Proof_wave_eqn} for (the upcoming) Theorem~\ref{Thm:wave_eqn}, and so is omitted. 

For a more profound application of Theorem~\ref{Thm:MainTheorem_0}, we consider the Cauchy problem for the linear wave equation with zero initial displacement on $\R^2$:
\[
\begin{cases}
u_{tt}-\Delta_x u=0, \qquad x\in \mathbb{R}^2,\ t\in (0,\infty),\\
u(x,0)=0,\\
u_t(x,0)=h(x).
\end{cases}
\]
By viewing the solution $u(x,t)$ as certain superposition of circles, one might expect Theorem~\ref{Thm:MainTheorem_0}/Theorem~\ref{Thm:MainTheorem_00} to reveal properties of the wave equation. And this is indeed the case: we can bound the Hausdorff dimension of $E(u,\beta) \coloneqq \{x\in\mathbb{R}^2: u(x,\cdot) \not\in C_{\rm loc}^\beta[0,\infty)\}$ (the set of $x$ with poor regularity in $t$). This is what we anticipated in Item (viii) of Section~\ref{minorsec:weighted_mixed_norm}. 
\begin{theorem}\label{Thm:wave_eqn}
If $h \in L_{\rm loc}^{3+}(\R^2)$, then $\dim_H E(u,\beta) \leq 3\beta-\frac{1}{2}$, $\forall\,\beta\in(\frac{1}{6},\frac{1}{2}]$. If $h \in L_{\rm loc}^{2+}(\R^2)$, then $\dim_H E(u,\beta) \leq 2\beta + 1$ for any $\beta\in(\frac{1}{4},\frac{1}{2}]$. 
\end{theorem}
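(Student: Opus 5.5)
The plan is to reduce Theorem~\ref{Thm:wave_eqn} to Corollary~\ref{Cor:integral_geometry}, or more precisely to the weighted mixed-norm bound of Theorem~\ref{Thm:MainTheorem_0} that underlies it. The reduction rests on two facts: the $2$D Poisson formula, and the mapping of $L^s$ into $C^{1/2-1/s}$ by a fractional integral of order $\frac12$. In polar coordinates the Poisson formula reads
\begin{equation*}
u(x,t)=\int_0^t \frac{\rho}{\sqrt{t^2-\rho^2}}\,\AA h(x,\rho)\,\dd\rho=\int_0^t (t-\rho)^{-1/2}\,\frac{\rho}{\sqrt{t+\rho}}\,\AA h(x,\rho)\,\dd\rho,
\end{equation*}
up to a harmless constant. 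So for fixed $x$, $u(x,\cdot)$ is essentially a Riemann--Liouville integral of order $\frac12$ of $g_x(\rho)\coloneqq\rho(t+\rho)^{-1/2}\AA h(x,\rho)$. If $g_x\in L^s_{\rm loc}$ with $s>2$, the Hardy--Littlewood estimate for such integrals gives $u(x,\cdot)\in C^{1/2-1/s}_{\rm loc}$. I will prove this directly by splitting $u(x,t)-u(x,t')$ into the region $|\rho-t|\le 2|t-t'|$ and its complement and applying H\"older in $\rho$; the smooth, $t$-dependent factor $(t+\rho)^{-1/2}$ only adds a Lipschitz error away from $\rho=t=0$. Setting $\beta=\frac12-\frac1s$, i.e.\ $s=\frac{2}{1-2\beta}$, the thresholds match. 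For $p=3$, $1-\frac3s=3\beta-\frac12$ and $s>3$ iff $\beta>\frac16$. For $p=2$, $2-\frac2s=2\beta+1$ and $s>4$ iff $\beta>\frac14$. Hence $E(u,\beta)\subseteq\{x:\AA h(x,\cdot)\notin L^{s}_{\rm loc}((0,\infty);\rho^{\gamma}\dd\rho)\}$ for a suitable power $\gamma$, and the task is to bound the dimension of this set.

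For the dimension bound I will argue by Frostman's lemma. Suppose $\dim_H E(u,\beta)>3\beta-\frac12$ in the first case. Since the right endpoint of $s$ in Theorem~\ref{Thm:MainTheorem_0} is open, I first perturb: choose $\tilde s$ slightly larger than $s$, so that $\tilde\beta=\frac12-\frac1{\tilde s}>\beta$ still satisfies $E(u,\beta)\subseteq E(u,\tilde\beta)$ up to the inclusion above, together with $\alpha\in(1-\frac3{\tilde s},\dim_H E]$ and, using $h\in L^{3+}_{\rm loc}$, an exponent $p$ slightly above $3$. Then I localize $x$ to a ball and $t$ to $[0,T]$, truncate $h$ to a compact set (finite speed of propagation), and take a compact $K\subseteq E$ carrying $\nu\in\mathfrak C^2(\alpha)$ with $\nu(K)>0$. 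Finally, H\"older in $L^p$ over the compact support lets me use the estimate with $p=3$.

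The core step is to show $\int\|\AA h(x,\cdot)\|_{L^{\tilde s}_\rho(\rho^{\gamma}\dd\rho,\,(0,T])}^{q}\,\dd\nu(x)<\infty$. This contradicts $\nu(K)>0$, since the integrand is infinite on $K$.
\begin{itemize}
\item Radii in $[1,2]$ are given directly by Theorem~\ref{Thm:MainTheorem_0} on translates of $B$, summed over a finite cover.
\item Radii in $[2^{j},2^{j+1}]$ for $j\ge 1$, up to $T$, are handled by rescaling to $[1,2]$: the rescaled measure is again Frostman with a controlled constant, and there are finitely many such $j$.
\item Small radii $[2^{-k},2^{-k+1}]$ are where the real work is. Rescaling by $2^k$ gives $\AA h(x,2^{-k}r)=\AA h_k(2^kx,r)$ with $h_k=h(2^{-k}\cdot)$ and $\|h_k\|_p=2^{2k/p}\|h\|_p$. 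The pushforward of $\nu$ has Frostman constant $\lesssim 2^{k\alpha}$ at unit scale, but its restriction to a unit ball, i.e.\ to $2^{-k}$-balls of $x$, costs a summation over $\sim 2^{k\alpha}$ pieces. The change of variables in $\rho$ contributes $2^{-k/\tilde s}$, and the weight $\rho^{\gamma}$ with $\gamma\approx \tilde s/2$ (coming from $\rho(t+\rho)^{-1/2}\sim\rho^{1/2}$) contributes a further gain.
\end{itemize}

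The main obstacle I expect is this dyadic bookkeeping near $\rho=0$, which must yield a geometric series in $k$ with a negative exponent. If the $\rho^{1/2}$ weight alone is not enough, I will first spend the slack in $p>3$ (respectively $p>2$) via the local $L^{p}$ improvement near the origin, and, if needed, a slightly smaller $\tilde\beta$ still above $\beta$. The same argument with Case (2) of Theorem~\ref{Thm:MainTheorem_0} and $p=q=2$ gives the second statement.
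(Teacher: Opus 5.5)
\textbf{Gap 1: the small-radius summation.} Your architecture matches the paper's: Poisson formula, an order-$\frac12$ smoothing estimate, extension of the mixed-norm bound from $r\in I$ to $r\in[0,T]$ by dyadic rescaling, then a Frostman contradiction. But the step you call ``the real work'' is left open, and the factors you sketch for it would not give a convergent series.

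\begin{itemize}
\item The pushforward $(2^k\cdot)_\#\nu$ has $\alpha$-Frostman constant at most $2^{-k\alpha}\langle\nu\rangle_\alpha$. That is a gain, not $2^{k\alpha}$.
\item More importantly, you must not pay for the number of pieces. With $p=q=3$ and $\frac1s\approx\frac{1-\alpha}{3}$, charging a factor $(2^{k\alpha})^{1/q}$ for $\sim 2^{k\alpha}$ pieces makes the $k$-th term grow like $2^{k(\frac\alpha3-\frac16)}$.
\item With the $\sim 2^{2k}$ unit balls actually needed to cover $2^kB$, it grows like $2^{k/2}$. A slack $p=3+\eps$ cannot repair either loss.
\end{itemize}

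The missing idea is the one in Section~\ref{subsec:extend_0}. Use the normalized measure $\tilde\nu=2^{k\alpha}(2^k\cdot)_\#\nu$, which satisfies $\langle\tilde\nu\rangle_\alpha\le\langle\nu\rangle_\alpha$ and produces the factor $2^{-k\alpha/q}$. Cover $2^kB$ by finitely overlapping balls $B(y_i,\frac14)$, and apply Theorem~\ref{Thm:MainTheorem_0} on each one to $h_k\chi_{B(y_i,5)}$; this is legitimate by locality of $\AA$ for $r\in I$. Then sum using $q\ge p$:
$\sum_i\|h_k\|^q_{L^p(B(y_i,5))}\le\big(\sum_i\|h_k\|^p_{L^p(B(y_i,5))}\big)^{q/p}\lesssim\|h_k\|_p^q$.
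With your $\rho^{1/2}$ weight, the $k$-th dyadic piece is then $\lesssim 2^{-k(\frac\alpha q+\frac1s+\frac12-\frac2p)}\langle\nu\rangle_\alpha^{1/q}\|h\|_p$. The exponent exceeds $\frac16$ when $p=q=3$, $\frac1s>\frac{1-\alpha}3$, and exceeds $\frac12$ when $p=q=2$, $\frac1s>\frac{2-\alpha}2$, so the series converges.

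\textbf{Gap 2: the endpoint $\beta=\frac12$ of the first statement.} There $s=\frac{2}{1-2\beta}=\infty$, so no $\tilde s>s$ exists and no $\alpha\le 1$ satisfies $s<\frac3{1-\alpha}$. Theorem~\ref{Thm:MainTheorem_0} cannot reach this case. You need a genuine weighted maximal ($s=\infty$) estimate at $\alpha=1$, $p=q>3$, which the paper imports from \cite{CDK25} and \cite{HKL22}.

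\textbf{Where your route differs favorably.}
\begin{itemize}
\item Choosing $\alpha$ strictly between $3\beta-\frac12$ (resp.\ $2\beta+1$) and $\dim_H E$ keeps $s$ inside the open range of Theorem~\ref{Thm:MainTheorem_0}. You need $\alpha<\dim_H E$ strictly, not ``$\le$'', and $\alpha\le 1$ in the first statement. This replaces the paper's interpolation with $(0,0,0)$, and for $\beta<\frac12$ in the first statement it even works with $h\in L^3_{\rm loc}$.
\item Your weight $\rho^{1/2}$ is the correct one for $C^{\frac12-\frac1s}$ regularity up to $t=0$. The paper's Proposition~\ref{Prop:Abel_transform} pairs $\J$ with $F_x=r\AA h(x,r)$, but $\J$ does not map $L^s[0,t]$ boundedly into $C^{\frac12-\frac1s}[0,t]$. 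For $F=\chi_{[0,\eps]}$ one has $\J F(\eps)=\frac\pi2$ while $\|F\|_{L^s}=\eps^{1/s}$. The paper's rescaling argument survives with the weaker factor $2^{-k/2}$, since $\frac\alpha q+\frac1s+\frac12>\frac2p$ in both cases.
\end{itemize}
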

\begin{remark}
    If the conjectured ranges in Proposition~\ref{Prop:necessary} were true for $p=2$, then we would have $\dim_H E(u,\beta) \leq 4\beta$ for $h \in L_{\rm loc}^{2+}(\R^2)$ and any $\beta\in(\frac{1}{4},\frac{1}{2}]$.
\end{remark}

We do not expect Theorem~\ref{Thm:wave_eqn} to be sharp in general, but it appears to be the first exceptional set estimates for time regularity in the literature, although the weighted mixed-norm estimates have been studied in \cite{HKL22, Oberlin15}. In Section~\ref{sec:wave_eqn}, we will develop a general transition scheme to go from weighted mixed-norm estimates to bounds for $\dim_H E(u,\beta)$. See Section~\ref{subsec:further_direction} for more historical remarks.

Again, we emphasize that for both Corollary~\ref{Cor:integral_geometry} and Theorem~\ref{Thm:wave_eqn}, the exceptional set estimates usually do not depend on $q$, but we do want $1/s$ to be as small as possible. Since the $1/s$ in our Theorem~\ref{Thm:MainTheorem_0} is worse than that given by \cite{Oberlin15} (only) when $\alpha\in (1,\frac{3}{2})$ (recall Figure~(\subref{fig:left}) and (\subref{fig:middle})), we have excluded the exceptional set estimates associated to this range. In other words, the current bounds stated in Corollary~\ref{Cor:integral_geometry} and Theorem~\ref{Thm:wave_eqn} are the best known so far. Intuitively, our improvements (blue regions in Figure~(\subref{fig:last})) and (\subref{fig:right}) gives new bounds for $\dim_H D(f,s)$ and $\dim_H E(u,\beta)$ when $s$ and $\beta$ are large. For small $s$ and $\beta$, the results in \cite{Oberlin15} should yield better bounds, but for simplicity, we do not try to make the statements comprehensive, and only focus on those ranges most relevant to our contributions.

\begin{remark}
    A natural question is whether our Theorem~\ref{Thm:MainTheorem_0} will lead to new exceptional set estimates for pinned distance sets, if one plugs it into the framework in \cite{Oberlin15} (recall Item (vi) in Section~\ref{minorsec:weighted_mixed_norm}). Unfortunately, this is not the case. One reason might be that the weighted mixed-norm estimates in \cite{Oberlin15} are applied when $p,q,s$ are close to $1$, while our Theorem~\ref{Thm:MainTheorem_0} focuses on large $p,q,s$. On the other hand, we can indeed get new results for the wave equations. This indicates that weighted mixed-norm estimates proved by different methods may have different advantageous regimes in applications, and it would be interesting to further explore such a phenomenon.
\end{remark}

\subsection{Additional backgrounds}\label{subsec:add_background}
Here we will provide additional remarks to put the weighted mixed-norm estimates (\ref{Eq:General_MixedNorm}) in a coherent big picture, which should make the story more comprehensive (the reader may just skip this part on the first reading):
\begin{enumerate}[label={\rm (\alph*)}]
    \item \textbf{Sobolev framework}:  Working in the Sobolev regime $L_\gamma^p$ is another way to balance singularity (recall (ii)) and also leads to some applications. For example, D. Oberlin-R. Oberlin \cite{Oberlin15} also proved $L_{\frac{1-\alpha}{2}+}^2$--$L^q(\nu)$ estimates when $q \in [1,2)$ and $\alpha\in (0,\frac{n-1}{2})$, which can be applied to obtain exceptional set estimates for pinned distance sets. It is also worth mentioning that estimates of the form $L_\gamma^2$--$L_{x,r}^2(\R^n \times I, \nu\otimes\mathcal{L}^1|_I)$ ($q=s=2$) are closely related to Liu's identity \cite{Liu19} for studying pinned distance sets, see \cite[Section~1.2]{HKL22diverge}. However, the $L^p$ framework indeed has its advantages in some applications, and is also more intuitive as all norms are now purely geometric (i.e., does not involve the Fourier side).
    
    \item \textbf{The $\mu = \mathcal{L}^n$ case}: Sharp (unweighted) mixed-norm estimates for $\AA^2$ were obtained by Li, Lou, and Yu \cite{LLY25}, who also studied more general classes of planar curves. One may also consult \cite{BORSS22} for some (unweighted) mixed-norm estimates for $\AA^n$ in frequency-localized forms, which can be used to study variational bounds for $\AA^n$ (still of mixed-norm--type, $L_r^s$ replaced with variational norms). Local smoothing estimates and interpolation techniques play a central role in both works. In contrast, our approach is very different since the weight $\nu$ causes additional complications, which requires us to excavate more delicate structure beyond local smoothing and ``trivial'' manipulations like interpolation.

    \item \textbf{X-ray analogues}: In the classical Kakeya literature, averaging over lines are often called \textit{X-ray transforms}. Their (unweighted) mixed-norm estimates (initiated by Wolff \cite{Wolff98}($n=3$) and generalized by {\L}aba and Tao \cite{LT01_Rev}($n\geq 4$)) were developed to (partially) reduce the Kakeya problem to its sticky case \cite{KLT00, LT01_GAFA}. After discretization, things come down to analyzing the setting where there are multiple parallel $\delta$-tubes in each direction. Our (\ref{Eq:General_MixedNorm}), after discretization (Section~\ref{sec:discretization}), can be viewed as an analogue under the ``center $\leftrightarrow$ direction'' and ``radius $\leftrightarrow$ translation'' interpretation. In both settings, we want improved dependence on the number of parallel tubes/concentric annuli. This is why we may call it ``X-ray--type extensions of Bourgain's and Wolff's circular maximal estimates''. 
    
    A key difference is that here such an extension is only reasonable in the presence of a singular weight $\nu$, because otherwise we already have the sharp bounds (in contrast, the Kakeya maximal conjecture in higher dimensions is still wide open). Weighted mixed-norm estimates for other geometric averaging operators are also rarely studied, at least compared with unweighted variants of X-ray transforms. Notably, Oberlin \cite{Oberlin06radon, Oberlin12} studied restricted Radon transforms (averaging over hypersurfaces) of form similar to (\ref{Eq:General_MixedNorm}), and connected them to the Furstenberg set problem and exceptional set estimates for orthogonal projections in $\R^2$, as well as the hyperplane packing problem in higher dimensions.
    
    \item \textbf{Time-weighted case}: One can also reverse the heuristics for analogy in (c), i.e., consider X-ray analogues subject to ``center $\leftrightarrow$ translation'' and ``radius $\leftrightarrow$ direction''. In the unweighted case, the corresponding $L_r^s(I)(L_x^q(\R^2))$ norm resembles (\ref{Eq:Wolff_maximal}) ($q=\infty$) and actually matches the form of classcial Strichartz estimates for various dispersive equations. One can also add fractal weights in $r$ (time variable), see e.g., \cite{LLR25strichartz} for estimates for dispersive operators. The motivation mainly comes from the recent intense research on Lebesgue bounds for spherical maximal functions with \textit{restricted dilation sets}, see the references therein. We only emphasize that all these works rely heavily on techniques on the Fourier side.
\end{enumerate}

\subsection{Notation}\label{subsec:notation}
If $X$ is a finite set, we use $\# X$ to denote its cardinality. $A_1\lesssim A_2$ or $A_1 = O(A_2)$ means that there is a constant $C$ independent of small parameter $\delta$, such that $A_1\leq CA_2$. We abbreviate $A_1\leq C_\eps\delta^{-\eps} A_2$ as $A_1\lessapprox A_2$ when $\delta$ and $\eps$ are clear from the context. Moreover, $A_1 \sim A_2$ means $A_1 \lesssim A_2$ and $A_1\gtrsim A_2$, while $A_1 \approx A_2$ means $A_1 \lessapprox A_2$ and $A_1\gtrapprox A_2$. We use $B^N(x,r)$ to represent a closed ball centered at $x$ with radius $r$ in $\mathbb{R}^N$. We abbreviate $B^{N}(x,r)$ as $B(x,r)$ if $\mathbb{R}^{N}$ is clear from the context. For any $p\in [1,\infty]$, let $p'$ denote the conjugate exponent of $p$, i.e., $1/p+1/p'=1$. Denote the support of a Borel measure $\mu$ by $\mathrm{spt}\,\mu$, and the support of a function $f$ by $\spt\,f$. Let $\norm{\mu}$ denote the total variance of $\mu$. Let $\chi_E$ be the indicator function of a set $E$. When the dimension $N$ is clear, we also use $|\cdot|$ to denote the Lebesgue measure $\mathcal L^N(\cdot)$. When $n=2$, we may use $C(x,r)$ in place of $S(x,r)$ to represent the circle centered at $x$ with radius $r$. For $\beta\in(0,1]$ and $R>0$, define $$\norm{F}_{C^\beta[0,R]} \coloneqq \norm{F}_{L^\infty[0,R]} + \sup_{x,y\in[0,R], x\neq y} \frac{|F(x)-F(y)|}{|x-y|^\beta}.$$
Let $C^\beta[0,R] \coloneqq \{F: \norm{F}_{C^\beta[0,R]}<\infty\}$ and $C_{\rm loc}^\beta[0,\infty) \coloneqq \{F: F \in C^\beta[0,R],\,\forall\,R>0\}$.

\subsection{Structure of the paper and sketch of the proof }\label{subsec:SketchOfTheProof}We now summarize the structure of the paper and give a rough sketch of the proof. \vspace{0.5em}\\
\noindent \textbf{Standard reduction.} We will first prove Theorem~\ref{Thm:MainTheorem} via purely geometric approaches. By a factoring argument, it suffices to consider a di discretized version of \eqref{Eq:MixedNormEstimate}. We first give the notion of the $\alpha$-dimension for a discretized set.
\begin{definition}[Katz-Tao $(\delta,\alpha,C)$-set]
      $Y\subseteq B$ is a  \textup{Katz-Tao $(\delta,\alpha,C)$-set}, if 
\begin{equation*}
    |\big(Y\medcap B(x,r)\big)|_\delta\leq C\Big(\frac{r}{\delta}\Big)^\alpha, \quad \forall\, x\in \R^n, r\in [\delta,1],
\end{equation*}
for some $C<\infty$. Here $|Y|_\delta$ is the cardinality of a maximal $\delta$-separated subset of $Y$. Equivalently, it is the cardinality of a minimal $\delta$-net of $X$, with elements in $X$. 
\end{definition}

\begin{proposition}[Discretization of $\nu$] \label{Prop:Nu=1Reduction}
 Denote as $X_\delta$  the union of the $\delta$-cubes whose centers are points in $X$, where $X$ is a Katz-Tao $(\delta,\alpha,C)$-set in $B\medcap \delta\mathbb Z^n$. If for $s,q,p\in [1,\infty]$, we have 
\begin{align}
    \norm{\AA_\delta f}_{L_x^q(X_\delta)(L^s_r(I))}\lessapprox \delta^{\frac{n-\alpha}{q}}\norm{f}_{L^p(\R^n)},\label{Eq:Nu=1}
\end{align}
then \eqref{Eq:MixedNormEstimate} holds for the same $s,q,p$.
\end{proposition}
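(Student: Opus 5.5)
The plan is to reduce a general $\nu\in\mathfrak{C}^n(\alpha)$ to a superposition of normalized Lebesgue measures on unions of $\delta$-cubes with Katz-Tao $(\delta,\alpha,C)$ centre sets, by dyadic pigeonholing in the $\nu$-mass of $\delta$-cubes. By homogeneity we may assume $\langle\nu\rangle_\alpha=1$. The point is that the integrand is essentially constant at scale $\delta$ in $x$. Indeed $\AA_\delta f(x,r)\lesssim \AA_{2\delta}|f|(x',r')$ whenever $|x-x'|\le\delta$ and $|r-r'|\le\delta$. So the quantity $G(x):=\norm{\AA_\delta f(x,\cdot)}_{L^s(I)}$ satisfies $G(x)\lesssim \widetilde G(x')$ for $|x-x'|\lesssim\delta$, where $\widetilde G$ is the analogue of $G$ built from $\AA_{C\delta}|f|$. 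We may assume $f\ge0$; replacing $\delta$ by $C\delta$ only costs constants in the final estimate.

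\textbf{Step 1: localization.} Partition $B$ into the cubes $Q$ of the grid $\delta\mathbb Z^n$ that meet $B$. For each $Q$, pick a point $x_Q\in Q$, or the maximizer of $\widetilde G$ over a slightly enlarged cube. This gives
\[
\int_B G^q\,\dd\nu\lesssim \sum_Q \nu(Q)\,\sup_{Q}\widetilde G^{\,q}\lesssim \sum_Q \nu(Q)\,\delta^{-n}\int_{\widetilde Q}\widehat G^{\,q}\,\dd x .
\]
Here $\widehat G$ is yet another comparable average at scale $C\delta$, and $\widetilde Q$ is a concentric dilate of $Q$. The second inequality again uses local constancy: the sup over $Q$ is dominated by the average over $\widetilde Q$ of a slightly fattened operator. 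This is the technical point to handle carefully. One way is to define all of these via $\AA_{C\delta}$ and note that $\AA_{C\delta}f(x',r)\gtrsim\AA_{\delta}f(x,r)$ for every $x'\in\widetilde Q$, $x\in Q$ once $C$ is large.

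\textbf{Step 2: pigeonholing.} Since $\nu(Q)\le (\sqrt n\delta)^\alpha$ and we may discard cubes with $\nu(Q)\le\delta^{100n}$ (their total contribution is trivially acceptable using $\|\AA_\delta f\|_\infty\lesssim\delta^{-n/p}\|f\|_p$), split the remaining cubes into $O(\log\delta^{-1})$ classes $\mathcal Q_k=\{Q:\nu(Q)\sim 2^{-k}\delta^\alpha\}$. Let $X^{(k)}$ be the set of centres of the cubes in $\mathcal Q_k$. For $k\ge0$, the ball condition gives, for every ball $B(x,\rho)$ with $\rho\ge\delta$,
\[
\#\{Q\in\mathcal Q_k:Q\subset B(x,\rho)\}\,2^{-k}\delta^\alpha\le \nu(B(x,2\rho))\lesssim\rho^\alpha .
\]
So $X^{(k)}$ is a Katz-Tao $(\delta,\alpha,C2^{k})$-set. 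To avoid the growing constant, split $X^{(k)}$ further into $O(2^k)$ Katz-Tao $(\delta,\alpha,C)$-sets. Alternatively, and more simply, observe that the hypothesis \eqref{Eq:Nu=1} is linear in the Katz-Tao constant: if $X$ has constant $C'$, the left side to the $q$-th power scales at most like $C'$. This can be seen by randomly or greedily decomposing $X$ into $\lesssim C'$ Katz-Tao $(\delta,\alpha,C)$-sets, which is possible by a standard covering argument. Either route gives, uniformly in $k$,
\[
\sum_{Q\in\mathcal Q_k}\delta^{-n}\int_{\widetilde Q}\widehat G^{\,q}\lesssim 2^{k}\delta^{-n}\,\delta^{n-\alpha}\,C_\eps\delta^{-q\eps}\|f\|_p^q .
\]
We apply \eqref{Eq:Nu=1} with $\widetilde Q$ in place of $Q$; this is harmless, since $\widetilde Q$ is a union of $O(1)$ grid cubes, and each translated family is still Katz-Tao.

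\textbf{Step 3: summing.} Multiplying by $\nu(Q)\sim2^{-k}\delta^\alpha$, the $2^k$ cancels and the class $k$ contributes $\lessapprox\|f\|_p^q$. Summing over the $O(\log\delta^{-1})$ classes costs only a $\delta^{-\eps}$ factor, which yields \eqref{Eq:MixedNormEstimate}. The main obstacle is Step 1: making the local-constancy replacement of the $\nu$-integral by Lebesgue averages rigorous across different $\delta$-scales while keeping the mixed $L^s_r$ norm inside. I would handle it pointwise in $r$ before taking $L^s_r$, using the containment $S_\delta(x,r)\subset S_{C\delta}(x',r)$ for $|x-x'|\le\delta$.
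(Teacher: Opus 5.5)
Your proposal is correct, and its skeleton is the same as the paper's. Both arguments use local constancy of $\AA_\delta f$ at scale $\delta$, discard cubes of negligible $\nu$-mass, and sort the remaining cubes into $O(\log\delta^{-1})$ classes by $\nu(Q)$. In both, the centres of class $k$ form a Katz--Tao set with constant $\sim 2^k$, and this $2^k$ cancels against $\nu(Q)\sim 2^{-k}\delta^\alpha$. You sum over the classes where the paper pigeonholes a single $\nu_0$; that difference is immaterial.

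The genuine difference is how you get from a Katz--Tao $(\delta,\alpha,K)$-set to sets with bounded constant, i.e.\ how you prove the intermediate estimate \eqref{Eq:IntermediateEstimate}.
\begin{itemize}
\item The paper first pigeonholes the per-cube mass $\norm{\AA_\delta f}^q_{L^q_x(Q)(L^s_r(I))}$. It then passes to an $\eta$-uniform subset (Demeter--Wang, Lemma 2.7); the mass pigeonholing is what makes discarding all but a $\delta^{\eta}$-fraction of the cubes harmless. Only after that does it apply the Demeter--Wang factoring lemma, which is stated for uniform sets.
\item You split an arbitrary, non-uniform Katz--Tao $(\delta,\alpha,K)$-set directly into $\lesssim K$ sets with bounded constant. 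This removes the uniformization and the mass pigeonholing entirely, and gives a bound exactly linear in $K$.
\end{itemize}

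The cost is that you leave this splitting to a ``standard covering argument'', and it needs a proof. A clean one:
\begin{itemize}
\item Order $X$ depth-first along the dyadic tree, so that $X\cap Q$ is a contiguous block for every dyadic cube $Q$.
\item Colour the points cyclically with $\lceil K\rceil$ colours.
\item In a dyadic cube $Q$ of side $r\ge\delta$, each colour then occupies at most $\lceil \#(X\cap Q)/\lceil K\rceil\rceil\lesssim_n (r/\delta)^\alpha$ points.
\item Every ball of radius $r$ is covered by $O_n(1)$ such cubes, so each colour class is Katz--Tao with constant $O_n(1)$.
\end{itemize}

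Be aware that the random colouring you mention only gives Katz--Tao constant $O(\log\delta^{-1})$, because the union bound fails at small scales without that loss. A greedy colouring loses a similar logarithm. Either loss is harmless here because of the $\lessapprox$ in \eqref{Eq:Nu=1}, provided the hypothesis is read for constants $C\lessapprox 1$, as in Proposition~\ref{Prop:MainThmDiscretizedEndPointCase}.

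One minor point: with the crude bound $\norm{\AA_\delta f}_\infty\lesssim\delta^{-1/p}\norm{f}_p$, the cutoff $\delta^{100n}$ for light cubes only suffices when $q\lesssim np$. For general finite $q$, take the cutoff $\delta^{N}$ with $N=N(n,q)$ large; this only changes the number of classes by a constant factor.
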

\begin{remark}
    Here we assume that $X$ is a subset of the $\delta$-lattice in $B$. This is totally for the purpose of well presentation. In fact, $X$ can be  any $\delta$-separated set in $B$. 
\end{remark}
A duality argument further reduces \eqref{Eq:Nu=1} to analyzing the intersection of spherical annuli.

\begin{proposition}[Duality argument]\label{Prop:Discritization}
    Let $X$ be as in Proposition~\ref{Prop:Nu=1Reduction}. For each $x\in X$, Let $R_x$ be a subset of $\{r\in I\medcap \delta\mathbb Z\}$ with $\# R_x = m$. If for conjugate exponents $s',q',p'\in [1,\infty]$ of $s,q,p$, we have
    \begin{equation}\label{Eq:DiscretizedVersionOfMixedNormEstimate}
        \Big\|\sum_{x_i\in X}\sum_{r_j\in R_{x_i}}\chi_{S_{10\delta}(x_i,r_j)}\Big\|_{L^{p'}(\R^n)}\lessapprox (m\delta)^{\frac{1}{s'}}\delta^{-\frac{\alpha}{q}}(\#X)^{\frac{1}{q'}},
    \end{equation}
    where $\chi_E$ is the characteristic function of set $E$.
    Then \eqref{Eq:Nu=1} holds for the same $s,q,p$.
\end{proposition}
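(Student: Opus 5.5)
The plan is a standard duality argument combined with a pigeonholing (layer-cake) decomposition in the radial variable, which turns the mixed norm into the counting of annuli. Fix $f\ge 0$ in $L^p$. For each $x\in X$, let $Q_x$ be the $\delta$-cube centered at $x$, so that $X_\delta=\bigcup_x Q_x$. The first step is to replace $\AA_\delta f(y,r)$ for $y\in Q_x$ by a quantity depending only on $(x,r_j)$ with $r_j\in I\cap\delta\Z$. Since $|y-x|\lesssim\delta$ and $|r-r_j|\le\delta$, we have $S_\delta(y,r)\subseteq S_{10\delta}(x,r_j)$, and hence
\begin{equation*}
\AA_\delta f(y,r)\lesssim \AA_{10\delta}f(x,r_j).
\end{equation*}
Writing $a_{x,j}:=\AA_{10\delta}f(x,r_j)$, the left side of \eqref{Eq:Nu=1} is therefore bounded by
\begin{equation*}
\delta^{\frac{n}{q}}\Big(\sum_{x\in X}\Big(\delta\sum_j a_{x,j}^s\Big)^{q/s}\Big)^{1/q},
\end{equation*}
which is a discrete mixed norm $\ell^q_x(\ell^s_j)$, with the $j$-sum weighted by $\delta$.

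Next, pigeonhole the values of $a_{x,j}$. The trivial bound $a_{x,j}\lesssim\delta^{-n/p}\|f\|_p$ shows that values below $\delta^{C}\|f\|_p$ contribute negligibly, so only $O(\log(1/\delta))$ dyadic levels $\lambda$ matter, at a cost of $\lessapprox$. For each level $\lambda$ and each $x$, set $R_x^\lambda=\{r_j: a_{x,j}\sim\lambda\}$. Then pigeonhole the cardinality $\#R_x^\lambda\sim m$ over dyadic $m\in[1,\delta^{-1}]$, and let $X_{\lambda,m}\subseteq X$ be the set of $x$ with that cardinality. Being a subset of a Katz-Tao $(\delta,\alpha,C)$-set, $X_{\lambda,m}$ is again one. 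After trimming each $R_x$ to have exactly $m$ elements, which loses at most a factor of 2, it suffices to show for each fixed $(\lambda,m)$ that
\begin{equation*}
\delta^{\frac{n}{q}}\lambda (m\delta)^{\frac1s}(\#X_{\lambda,m})^{\frac1q}\lessapprox \delta^{\frac{n-\alpha}{q}}\|f\|_p .
\end{equation*}

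The key step is duality. Summing $a_{x,j}\gtrsim\lambda$ over $x\in X_{\lambda,m}$ and $r_j\in R_x$, and using $\mathcal L^n(S_{10\delta})\sim\delta$, gives
\begin{equation*}
\lambda\, m\,\#X_{\lambda,m}\lesssim \delta^{-1}\int f\sum_{x}\sum_{j}\chi_{S_{10\delta}(x,r_j)}\le \delta^{-1}\|f\|_p\Big\|\sum\sum\chi_{S_{10\delta}}\Big\|_{p'} .
\end{equation*}
Applying \eqref{Eq:DiscretizedVersionOfMixedNormEstimate} to the right side yields
\begin{equation*}
\lambda\lessapprox \delta^{-1}m^{-1}(\#X_{\lambda,m})^{-1}(m\delta)^{\frac1{s'}}\delta^{-\frac{\alpha}q}(\#X_{\lambda,m})^{\frac1{q'}}\|f\|_p=(m\delta)^{-\frac1s}\delta^{-\frac\alpha q}(\#X_{\lambda,m})^{-\frac1q}\|f\|_p .
\end{equation*}
Substituting this bound for $\lambda$ gives exactly the needed inequality. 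Summing over the $O(\log^2(1/\delta))$ pairs $(\lambda,m)$ then proves \eqref{Eq:Nu=1}.

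I expect the main obstacle to be the bookkeeping rather than any new idea. The comparison $\AA_\delta f(y,r)\lesssim\AA_{10\delta}f(x,r_j)$ needs the sets to be nested with comparable measures, which holds since $r\in I$ is bounded below. The endpoint cases $s=\infty$ or $q=\infty$ are handled identically, with sums replaced by suprema. One must also check that the exponents telescope exactly, since any mismatch would leave an uncontrolled power of $m$ or $\#X$.
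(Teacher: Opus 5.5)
Your argument is correct, and it reaches the paper's reduction through a different decomposition.

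\textbf{The paper's route.} It first uses two rounds of duality to rewrite the discretized $\ell^q_x(\ell^s_j)$ norm as a supremum of $\sum_i\sum_j a_ib_{ij}\AA_{10\delta}f(x_i,r_j)$ over dual sequences with $\sum_i a_i^{q'}=1$ and $\sum_j b_{ij}^{s'}=1$ (see \eqref{Eq:Duality}). It then applies H\"older once to arrive at the weighted inequality \eqref{Eq:DiscritizedVersionWithAB}. Only after that does it pigeonhole the \emph{dual} coefficients ($a_i\sim a$, $b_{ij}\sim b$, and the number of such $j$), absorbing the weights through $Na^{q'}\le 1$ and $mb^{s'}\le 1$.

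\textbf{Your route.} You pigeonhole on the primal side instead, into dyadic level sets of the values $\AA_{10\delta}f(x,r_j)$ and of their counts. On each piece you need only the scalar pairing $\int f\sum\chi_{S_{10\delta}}\le\|f\|_{L^p}\|\sum\chi_{S_{10\delta}}\|_{L^{p'}}$. You then solve for the level $\lambda$ from the hypothesis, and the exponents telescope exactly as you check. You leave the recombination of the pieces implicit, but it is routine: Minkowski's inequality over the $O(\log\delta^{-1})$ levels $\lambda$, and subadditivity of $t\mapsto t^{1/q}$ over the dyadic values of $m$ (using $\#R_x^\lambda<2m$).

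\textbf{Comparison.} The two routes are equal in strength. Both lose only powers of $\log\delta^{-1}$, and both apply the hypothesis to a sub-configuration whose center set is a subset of $X$, hence still a Katz--Tao set with the same constant. Yours is a restricted-type argument that needs only the scalar H\"older pairing, not the dual description of the mixed norm. The paper's keeps an exact reformulation of the mixed norm up to the point where pigeonholing begins, and isolates \eqref{Eq:DiscritizedVersionWithAB} as the natural dual form of \eqref{Eq:Nu=1}.
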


\begin{remark}
    In fact, \eqref{Eq:MixedNormEstimate}, \eqref{Eq:Nu=1} and \eqref{Eq:DiscretizedVersionOfMixedNormEstimate} are equivalent to each other. We omit the inverse implications.
\end{remark}
These reductions are routine and work for all $n\geq 2$. Details are presented in Section \ref{sec:discretization}.

To prove Proposition \ref{Prop:Discritization} for desired $s,q,p$ when $n=2$, we take different strategies depending on different ranges of $\alpha$.
\vspace{0.5em}\\
\noindent \textbf{Case $\alpha\in (0,1]$.} This corresponds to Item \eqref{Item1OfMainTheorem} of Theorem \ref{Thm:MainTheorem}. We modify the argument of invoking incidence estimate of circles in Pramanik, Yang and Zahl \cite{PYZ24}. Informally speaking, the incidence bound states the following fact. Let $\mathcal C$ be a set of circles satisfying certain nonconcentration and nondegenerate conditions, and $$\mathcal R\coloneq \{x\in \R^2: \text{there are $\mu$ circles tangent at }x \text{ with the same tangent line} \}$$ 
be the set of ``$\mu$-clamshells'', then $\#\mathcal R\lesssim (\#\mathcal C/\mu)^{3/2}$. The precise statement of this fact and the proof of Item~(\ref{Item:Item1OfBasicGMTLemma}) are presented in Section \ref{sec:case_(0,1]}.
\vspace{0.5em}\\
\noindent \textbf{Case $\alpha\in (1,2]$.} This corresponds to Item \eqref{Item2OfMainTHeorem} of Theorem \ref{Thm:MainTheorem}. Our argument relies on a fractal Fubini/slicing theorem in \cite[Proposition~6.3~and~6.4]{Mattila15}. Informally speaking, this theorem allows us to extract a product structure from the ``$\alpha$-dimensional'' set $X$: $X=$``$X_1\times X_2$'', where $X_1$ is ``$1$-dimensional'' and $X_2$ is ``$(\alpha-1)$-dimensional''. After a harmless change of variables, we can assume $X_1$ is parallel to the $y$-axis. On each slice $\{y_0\}\times X_2, y_0\in X_1$, the centers have roughly the same $y$-coordinate $y_0$. This, when combined with certain ``small cap'' reduction, allows us to avoid the tangency case and prove Item (\ref{Item2OfMainTHeorem}). Details are contained in Section \ref{sec:case_(1,2]}.\vspace{0.5em}\\
\noindent 
\textbf{High frequency decay.} Having proved Theorem~\ref{Thm:MainTheorem}, we can interpolate it with the “high frequency decay” estimates in \cite{HKL22} to remove the $\delta^{-\eps}$-loss, which immediately yields Theorem~\ref{Thm:MainTheorem_0}. Such an interpolation is standard and widely used in the literature, see Section~\ref{sec:high_freq_decay} for more details. Our key observation is that high frequency decay is always available in the mixed-norm setting.
\vspace{0.5em}\\
\noindent 
\textbf{Applications to wave equations.} In Section~\ref{sec:wave_eqn}, we will prove Theorem~\ref{Thm:wave_eqn}. Things will be divided into four subsections. In Section~\ref{subsec:prelim}, we will first provide the key bridge (Proposition~\ref{Prop:Abel_transform}) relating $L_r^s$ to $C_{\rm loc}^\beta$, and justify the well-posedness of the H\"older regularity problem. In Section~\ref{subsec:extend_0}, we will do the main technical manipulation, i.e., extending the radial bounds to $r=0$ via certain rescaling arguments. In Section~\ref{subsec:Proof_wave_eqn}, we will finish the proof of Theorem~\ref{Thm:wave_eqn} by the tools established. In Section~\ref{subsec:further_direction}, we will offer additional historical remarks, as well as possible further directions.\\
\noindent 
\textbf{Necessary conditions.}
In the last Section \ref{sec:necessary} of the paper, we also present a conjectural sharp range of $(p,q,s)$ for the mixed norm estimate to be true, by testing the classical examples.

\subsection{Acknowledgements}
The authors would like to thank Shaoming Guo, Yumeng Ou, Malabika Pramanik, Pablo Shmerkin, and Joshua Zahl for helpful conversations during this project. 
\section{
Discretization and duality argument}\label{sec:discretization}

This section consists of two standard reductions of the original estimate. Before the proofs, we record the following ``locally constant property'' of $\AA_\delta f$.
\begin{lemma}\label{Lem:LocallyConstantProperty}
    If $|(x,r)-(\bar x,\bar r)|\leq \delta$, then 
    \begin{equation*}
        \AA_\delta f(x,r)\leq \AA_{10\delta}f(\bar x,\bar r).
    \end{equation*}
\end{lemma}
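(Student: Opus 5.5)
The plan is a direct containment argument, and f is assumed nonnegative, as the paper allows. I read $S_\delta(x,r)$ as the annulus $\{y\in\R^n: ||y-x|-r|\le\delta\}$; the displayed definition in the introduction omits the centre $x$, presumably a typo. The argument has two steps: first show that the small annulus sits inside the fattened one, then compare the normalising volumes. The second step is where I expect the statement as written to need a constant.

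\textbf{Step 1 (containment).} Take $y\in S_\delta(x,r)$. By the triangle inequality,
\[
\bigl||y-\bar x|-\bar r\bigr| \le \bigl||y-x|-r\bigr| + |x-\bar x| + |r-\bar r| \le \delta + 2\delta \le 10\delta,
\]
using $|x-\bar x|,|r-\bar r|\le |(x,r)-(\bar x,\bar r)|\le\delta$. Hence $S_\delta(x,r)\subseteq S_{10\delta}(\bar x,\bar r)$. Since $f\ge 0$,
\[
\int_{S_\delta(x,r)} f \le \int_{S_{10\delta}(\bar x,\bar r)} f .
\]

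\textbf{Step 2 (normalisations).} Dividing by $\mathcal L^n(S_\delta(x,r))$ gives
\[
\AA_\delta f(x,r)\le \frac{\mathcal L^n(S_{10\delta}(\bar x,\bar r))}{\mathcal L^n(S_\delta(x,r))}\,\AA_{10\delta}f(\bar x,\bar r).
\]
I would compute the volumes explicitly as $\omega_n\big((r+\delta)^n-(r-\delta)_+^n\big)$. For $r,\bar r$ in (a $\delta$-neighbourhood of) $I=[1,2]$ and $\delta<1/10$, this gives $\mathcal L^n(S_\delta(x,r))\sim_n \delta r^{n-1}\sim\delta$ and $\mathcal L^n(S_{10\delta}(\bar x,\bar r))\sim_n 10\delta$. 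So the ratio is bounded by a dimensional constant $C_n$, yielding
\[
\AA_\delta f(x,r)\le C_n\,\AA_{10\delta}f(\bar x,\bar r).
\]

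\textbf{Main obstacle: the constant.} The ratio of volumes is roughly $10$, not at most $1$. Taking $f=\chi_{S_\delta(x,r)}$ shows the left side equals $1$ while the right side is about $1/10$. So the lemma can only hold in the form $\AA_\delta f(x,r)\lesssim \AA_{10\delta}f(\bar x,\bar r)$, with an implicit constant depending only on $n$. This is harmless for every later use (Proposition~\ref{Prop:Nu=1Reduction}, where losses of $O(1)$ are absorbed by $\lessapprox$), so I would state and prove the $\lesssim$ version.

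Alternatively, one could normalise by a fixed comparable quantity such as $\delta$ instead of the exact volume. Then $\int_{S_\delta}f\le\int_{S_{10\delta}}f$ gives an exact inequality up to the factor $10$, which is the same conclusion.
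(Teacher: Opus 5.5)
Your proposal is correct, and its first step matches the paper's, which also reduces everything to the containment $S_\delta(x,r)\subseteq S_{10\delta}(\bar x,\bar r)$. Your scalar bound $\bigl||y-\bar x|-\bar r\bigr|\le\bigl||y-x|-r\bigr|+|x-\bar x|+|r-\bar r|\le 3\delta$ is a cleaner route to it than the paper's perturbation of the vector $y$. As written, that perturbation also drops the centres, in line with the typo you noticed in the definition of $S_\delta$.

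Where you go beyond the paper is Step~2. The paper simply asserts that ``after unwinding the definition, it suffices to prove the inclusion.'' That overlooks the fact that the two averages are normalised by $\mathcal L^n(S_\delta(x,r))$ and $\mathcal L^n(S_{10\delta}(\bar x,\bar r))$, whose ratio is about $10$.

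Your example $f=\chi_{S_\delta(x,r)}$ with $(\bar x,\bar r)=(x,r)$ gives $1\le 1/10$ when $n=2$, so the lemma is false as literally stated. The right statement is yours: $\AA_\delta f(x,r)\le C_n\,\AA_{10\delta}f(\bar x,\bar r)$, or equivalently the unnormalised inequality $\int_{S_\delta(x,r)}f\le\int_{S_{10\delta}(\bar x,\bar r)}f$.

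The correction costs nothing later. The lemma is invoked in Step~1 of the proof of Proposition~\ref{Prop:Nu=1Reduction} and in the first display of the proof of Proposition~\ref{Prop:Discritization}, and both are already stated with $\lesssim$.
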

\begin{proof}
    After unwinding the definition, it suffices to prove the inclusion 
    \begin{equation}\label{Eq:InclusionRelation}
        S_\delta(x,r)\subseteq S_{10\delta}(\bar x,\bar r),
    \end{equation}
     Rewrite the condition,  $\bar x=x+\mathrm{Err}(x)$, $\bar r=r+\mathrm{Err}(r)$ for some $|\mathrm{Err}(x)|,|\mathrm{Err}(r)|\leq \delta$. Therefore, for any $y\in S_\delta(x,r)$, there is $\mathrm{Err}:|\mathrm{Err}|\leq \delta$ such that
    \begin{equation*}
      y=r\frac{y-x}{|y-x|}+\mathrm{Err}=(\bar r-\mathrm{Err}(r))\frac{y-\bar x+\mathrm{Err}(x)}{|y-\bar x+\mathrm{Err}(x)|}+\mathrm{Err}.
    \end{equation*}
    By triangle inequality, we obtain that the error term
    \begin{equation*}
        \Big|y-\bar r\frac{y-\bar x}{|y-\bar x|}\Big|\leq 10\delta,
    \end{equation*}
    which implies $y\in S_{10\delta}(\bar x,\bar r)$. That is \eqref{Eq:InclusionRelation}
\end{proof}

\subsection{ Discretization} 
Recall Proposition \ref{Prop:Nu=1Reduction} and we start the proof. 
\begin{proof}[Proof of Proposition \ref{Prop:Nu=1Reduction}] 
For any $X\subset B\medcap \delta\mathbb Z^n$ and its corresponding $X_\delta$, let 
$$\langle X\rangle_\alpha \coloneq \sup_{x\in \R^n,r\in [\delta,1]}\frac{\#B(x,r)\medcap X}{(r/\delta)^\alpha},$$ we introduce the following intermediate estimate:
\begin{equation}
      \norm{\AA_\delta f}_{L
        _x^q(X_\delta)(L^s_r(I))}\lessapprox \delta^{\frac{n-\alpha}{q}}\langle X\rangle_\alpha^{\frac{1}{q}}\norm{f}_{L^p(\R^n)}.\label{Eq:IntermediateEstimate}
\end{equation}
We will prove 
\begin{equation*}
    \eqref{Eq:Nu=1}\Rightarrow\eqref{Eq:IntermediateEstimate}\Rightarrow\eqref{Eq:MixedNormEstimate}.
\end{equation*}
\noindent \textbf{Step 1:$\eqref{Eq:IntermediateEstimate}\Rightarrow\eqref{Eq:MixedNormEstimate}$.} This reduction is used in the study of weighted restriction estimate by Du and Zhang \cite{Du_Zhang_WeightedRestriction}. Let $B\subseteq\medcup Q$, the union of all $\delta$-cubes with centers $x_Q\in B\medcap \delta\mathbb Z$ and $\mathcal Q =\{Q: \nu(Q)\in [\delta^{100n},1]\langle\nu \rangle_\alpha\}$.  Without loss of generality, we assume the left-hand side of \eqref{Eq:MixedNormEstimate} is $\geq \delta^{50n}\langle\nu \rangle_\alpha^{\frac{1}{q}}\norm{f}_{L^p(\R^n)}$. Otherwise \eqref{Eq:MixedNormEstimate} holds trivially. By Lemma \ref{Lem:LocallyConstantProperty}, note that the integral over $\{\medcup Q:Q\notin\mathcal Q\}$ is negligible,
\begin{equation}\label{Eq:IntegrationOverAllQs}
    \begin{aligned}
    \text{LHS of \eqref{Eq:MixedNormEstimate}}= \norm{\AA_\delta f}_{L
        _x^q(B,\nu)(L^s_r(I))}
        \lesssim \Big(\sum_{Q\in \mathcal Q}\norm{\AA_{10\delta} f(x_Q,r)}_{L^s_r(I)}^q\cdot \nu(Q)\Big)^{\frac{1}{q}}.
\end{aligned}
\end{equation}
We dyadically decompose $[\delta^{100n},1]$. 
\begin{equation}\label{Eq:ReplacedByDyadicNumbers}
    \text{RHS of \eqref{Eq:IntegrationOverAllQs}}\sim\Big(\sum_{s\in [\delta^{100n},1]\medcap 2^{-\mathbb N}}\sum_{Q\in \mathcal Q:\nu(Q)\sim s\langle\nu\rangle_{\alpha}}\norm{\AA_{10\delta} f(x_Q,r)}_{L^s_r(I)}^q\cdot s\langle\nu\rangle_{\alpha}\Big)^{\frac{1}{q}}.
\end{equation}
Since $\#[\delta^{100n},1]\medcap 2^{-\mathbb N}=O(\log \delta^{-1})$, by pigeonhole principle, there is $\nu_0$ such that 
\begin{equation}\label{Eq:AfterDP}
\begin{aligned}
     \text{RHS of \eqref{Eq:ReplacedByDyadicNumbers}}&\lessapprox\Big(\sum_{Q\in\mathcal Q;\nu(Q)\sim \nu_0}\norm{\AA_{10\delta} f(x_Q,r)}_{L^s_r(I)}^q\cdot \nu_0\Big)^{\frac{1}{q}}\\
     &=\Big(\nu_0\delta^{-n}\sum_{Q\subseteq X_\delta';\nu(Q)\sim \nu_0}\norm{\AA_{10\delta} f(x_Q,r)}_{L^s_r(I)}^q\cdot \delta^n\Big)^{\frac{1}{q}}
\end{aligned}
\end{equation}
If we define $X_\delta=\medcup_{Q\in \mathcal Q:\nu(Q)\sim \nu_0}Q$ and $X$ the corresponding centers, then
\begin{equation}\label{Eq:X_Delta}
    \text{RHS of \eqref{Eq:AfterDP}}\lesssim\big(\nu_0\delta^{-n}\big)^{\frac{1}{q}}\norm{\AA_{100\delta} f}_{L
        _x^q(X_\delta)(L^s_r(I))}
\end{equation}
Invoking \eqref{Eq:IntermediateEstimate}, we obtain
\begin{equation}
    \begin{aligned}
     \text{LHS of \eqref{Eq:MixedNormEstimate}}\lessapprox\text{RHS of \eqref{Eq:X_Delta}}\lessapprox \big(\nu_0\delta^{-n}\big)^{\frac{1}{q}}\delta^{\frac{n-\alpha}{q}}\langle X\rangle_\alpha^{\frac{1}{q}}\norm{f}_{L^p(\R^n)}\leq \langle\nu\rangle_\alpha^{\frac{1}{q}}\norm{f}_{L^p(\R^n)}=\text{RHS of \eqref{Eq:MixedNormEstimate}}.
    \end{aligned}
\end{equation}
Last inequality relies on the fact that $\nu_0\langle X\rangle_\alpha=\sup_{x,r}\frac{\nu_0\#B(x,r)\medcap X}{(r/\delta)^\alpha}=\sup_{x,r}\frac{\nu(B(x,r))}{(r/\delta)^\alpha}\leq \delta^{\alpha}\langle\nu\rangle_\alpha$.
\vspace{0.5em}\\
\noindent \textbf{Step 2:$\eqref{Eq:Nu=1}\Rightarrow\eqref{Eq:IntermediateEstimate}$.} This is a direct application of a factoring lemma for Katz-Tao set.
\begin{lemma}[Demeter and Wang \cite{Demeter_Wang}, Lemma 2.8]\label{Lem:FactoringLemmaForKatzTaoSet}
    Each $\eta$-uniform Katz-Tao $(\delta,s,K)$-set $S\subseteq B\medcap \delta\mathbb Z^n$ can be partitioned into $K\delta^{o_\eta(1))}$ many sets $S_i$ which are Katz-Tao $(\delta,s,O_\eta(1))$-sets. The quantities $o_\eta(1))$, $O_\eta(1)$ are independent of $K$.
\end{lemma}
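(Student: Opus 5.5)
The plan is to exploit the tree structure that $\eta$-uniformity provides, and to split the branching of this tree scale by scale. Set $\Delta := \delta^{\eta}$ and $T := \lceil 1/\eta\rceil$, so the relevant scales are $\Delta^{j}$, $0\le j\le T$. By $\eta$-uniformity there are integers $N_0,\dots,N_{T-1}$ such that every $\Delta^{j}$-cube meeting $S$ contains exactly $N_j$ $\Delta^{j+1}$-cubes meeting $S$. Hence every such $\Delta^j$-cube contains exactly $M_j:=\prod_{i\ge j}N_i$ points of $S$. Writing $f(j)=\log M_j$ and $g(j)=s\log(\Delta^{j}/\delta)$, the Katz-Tao $(\delta,s,K)$ hypothesis at the grid scales reads
\[
f(j)\le g(j)+\log K+O(1), \qquad 0\le j\le T.
\]

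\textbf{Step 1: choosing a branching budget.} I would define a target profile
\[
h(j):=\min_{k\ge j}\bigl[g(k)+f(j)-f(k)\bigr].
\]
It has three properties. First, $h(j)\le g(j)$ (take $k=j$). Second, $0\le h(j)-h(j+1)\le f(j)-f(k)$ increments, i.e.\ $h(j)-h(j+1)\le \log N_j$; this needs a short check, capping by $f$ where necessary. Third, $h(0)\ge f(0)-\log K-O(1)$, because $g(k)-f(k)\ge-\log K-O(1)$ for every $k$. Set $A_j:=\lceil e^{h(j)-h(j+1)}\rceil\le N_j$.

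\textbf{Step 2: partitioning.} For every $\Delta^j$-cube $Q$ meeting $S$, I would split its $N_j$ children into $\lceil N_j/A_j\rceil$ groups of size at most $A_j$, using the same labelling scheme in every cube. A piece $S_i$ is then determined by choosing one group label at each level $j$. The number of pieces is
\[
\prod_j \lceil N_j/A_j\rceil \le 2^{T}\,\frac{M_0}{\prod_j A_j}\lesssim 2^{T} e^{f(0)-h(0)}\lesssim_\eta K .
\]
Inside a single piece, every $\Delta^j$-cube contains at most $\prod_{i\ge j}A_i\lesssim 2^{T}e^{h(j)}\le 2^T(\Delta^j/\delta)^s$ points. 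So each piece satisfies the Katz-Tao bound at the grid scales with constant $O_\eta(1)$.

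\textbf{Step 3: intermediate scales (main obstacle).} The remaining issue is a ball of radius $r\in[\Delta^{j+1},\Delta^{j}]$. Such a ball can meet up to $\min\{A_j,(r/\Delta^{j+1})^n\}$ selected children, and the crude bound loses a factor $\Delta^{-s}=\delta^{-s\eta}$, which is not $O_\eta(1)$. If this loss is acceptable, it can be absorbed into the $\delta^{o_\eta(1)}$ factor by refining through further partitioning. To genuinely get $O_\eta(1)$, I would choose the groups in Step 2 to be spatially spread out rather than arbitrary. Concretely, I would further discretize each $\Delta^j$-cube dyadically between $\Delta^{j+1}$ and $\Delta^j$, and run the same budget construction of Step 1 on this finer dyadic tree. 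Since this tree is only uniform up to $\delta^{o_\eta(1)}$, I would first pass to uniform subtrees at a cost of $\delta^{-o_\eta(1)}$ further pieces, using $\log(1/\Delta)$-many dyadic levels and pigeonholing. The point of using the dyadic (rather than $\Delta$-adic) tree is that consecutive scales differ by a factor $2$. Then the loss at non-grid radii is only $2^{s}$, and the Katz-Tao constant is $O_\eta(1)$. The delicate bookkeeping is ensuring that the dyadic pigeonholing costs only $\delta^{-o_\eta(1)}$ pieces, which is where the $K\delta^{o_\eta(1)}$ count in the statement comes from.
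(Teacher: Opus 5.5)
There is a genuine gap in Step 3, and it comes from your reading of ``$\eta$-uniform''. The paper quotes this lemma from Demeter--Wang without proof. There, following Orponen--Shmerkin, whose uniformization lemma underlies Lemma~\ref{Lem:Uniformization}, uniformity is imposed along the scales $2^{-jT}$, $0\le j\le m$, with $\delta=2^{-mT}$. Here $T=T(\eta)$ is a fixed integer with $T^{-1}\log_2(2T)\le\eta$, which is exactly what makes uniformization cost only $(2T)^{-m}=\delta^{\eta}$. So consecutive scales differ by the constant factor $2^{T}$, not by $\delta^{-\eta}$, and there are $m=\log_2(1/\delta)/T$ levels, a number that grows as $\delta\to 0$.

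With this convention your intermediate-radius obstacle is trivial. A ball of radius $r\in[2^{-(j+1)T},2^{-jT}]$ meets $O_n(1)$ cubes of side $2^{-jT}$, so it costs only $O_n(2^{Ts})=O_\eta(1)$, and Step 3 is not needed. What does need repair is the rounding in Step 2. With $m$ levels, the accumulated ceiling error in $\prod_{i\ge j}\lceil e^{h(i)-h(i+1)}\rceil$ is no longer $O_\eta(1)$; a priori it is only bounded by $2^m=\delta^{-1/T}$. Round down instead. Since $h(j)\ge h(j+1)$ and $h(m)=0$, the integers $A_j:=\lfloor e^{h(j)-h(j+1)}\rfloor$ satisfy $1\le A_j\le N_j$ and $\prod_{i\ge j}A_i\le e^{h(j)}\le (2^{-jT}/\delta)^s$ with no loss. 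All rounding errors then move into the number of pieces: $\prod_j\lceil N_j/A_j\rceil\le 4^m e^{f(0)-h(0)}\lesssim 4^mK=K\delta^{-2/T}$. That factor $4^m$ is the $\delta^{-o_\eta(1)}$ in the statement. With these two corrections, your Steps 1--2 (a common labelling of children in every cube, with pieces indexed by label sequences) form a complete proof.

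Under your own convention, Step 3 as proposed does not close the argument.
\begin{itemize}
\item \emph{Dyadic uniformization is too expensive.} Passing to a subtree that is uniform at every dyadic scale costs a fixed power of $\delta$ in general, not $\delta^{-o(1)}$, because the per-level losses multiply over $\sim\log_2(1/\delta)$ levels. For example, take the $1$-dimensional tree in which an interval keeps both children if it is a left child, and only its left child if it is a right child. A uniform subtree can never branch at two consecutive levels, so it has at most $2^{\lceil L/2\rceil}$ points, while $\#S\sim\phi^{L}$ (with $L=\log_2(1/\delta)$ and $\phi$ the golden ratio). Moreover, pigeonholing yields a subset, not a partition.
\item \emph{Rounding fails at ratio-$2$ levels.} Even granting a dyadically uniform tree, Step 2 run with ratio-$2$ levels fails for the rounding reason above. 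If the ideal per-level branching $2^{s}$ is not an integer, you lose up to a factor $2$ at each of $\log_2(1/\delta)$ levels, either in the Katz--Tao constant or in the number of pieces. Grouping $T$ dyadic levels into one block is exactly the cure, which brings you back to the actual definition.
\item \emph{The fallback is circular.} Absorbing the $\delta^{-s\eta}$ loss ``by further partitioning'' is the same lemma again, for a set with no uniformity between consecutive grid scales.
\end{itemize}
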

Consult Demeter and Wang's paper for the definition of $\eta$-uniform set. For our purpose, we only need the following fact. 
\begin{lemma}[Demeter and Wang \cite{Demeter_Wang}, Lemma 2.7]\label{Lem:Uniformization}
    Let $\eta>0$ and $\delta<\delta(\eta)$. Then each set $P\subseteq B\medcap \delta \mathbb Z^n$has an $\eta$-uniform subset containing at least a $\sim \delta^{\eta}$-fraction of the original set.
\end{lemma}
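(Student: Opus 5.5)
The plan is to prove the uniformization lemma by a multiscale pigeonholing argument. I first fix what \emph{$\eta$-uniform} means, following Demeter--Wang.

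\textbf{Setup.} Fix an integer $T=T(\eta)$, to be chosen large, and set $\Delta\coloneqq 2^{-T}$. After harmlessly shrinking $\delta$ (this is where $\delta<\delta(\eta)$ is used), I assume $\delta=\Delta^m$ for an integer $m\sim \log(1/\delta)/T$. A set $P\subseteq B\medcap\delta\Z^n$ is $\eta$-uniform if it has the following property for every $0\le j<m$. Consider the dyadic $\Delta^{j}$-cubes $Q$ meeting $P$. Each such $Q$ contains the same number $N_j$ of dyadic $\Delta^{j+1}$-subcubes meeting $P$. I want a subset $P'\subseteq P$ with this property and $\#P'\gtrsim\delta^{\eta}\#P$.

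\textbf{Bottom-up pigeonholing.} I would construct $P'$ one level at a time, going from the finest scale to the coarsest. Going upward is essential: at each level I delete only entire subtrees, so the uniformity already established at finer levels is preserved.

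\textbf{Step $j=m-1$.} Each $\Delta^{m-1}$-cube contains between $1$ and $\Delta^{-n}$ points of $P$. Split these cubes into $O(nT)$ dyadic classes according to the number of points $\sim 2^k$ they contain. Choose the class carrying the largest share of $\#P$; it carries at least a $\gtrsim (nT)^{-1}$ fraction. Discard all cubes outside this class. Inside each remaining cube, discard points until exactly $2^k$ remain; this loses at most a factor $2$. Level $m-1$ is now uniform with $N_{m-1}=2^k$.

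\textbf{Inductive step $j<m-1$.} Suppose the levels $j+1,\dots,m-1$ are already uniform. Then every surviving $\Delta^{j+1}$-cube contains the same number $M$ of points. So the number of points in a $\Delta^j$-cube is exactly $M$ times the number of its surviving children. Pigeonhole the $\Delta^j$-cubes into $O(nT)$ dyadic classes by their number of children, keeping the class that carries the most points. Then trim every kept parent down to exactly $2^{k_j}$ children by deleting whole child subtrees. The finer levels stay uniform, and the loss at this step is again a factor $\gtrsim(nT)^{-1}$.

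\textbf{Counting the loss.} After all $m$ levels, the surviving set $P'$ is $\eta$-uniform and satisfies
$$\#P'\ \ge\ (CnT)^{-m}\,\#P\ =\ \delta^{\frac{\log(CnT)}{T\log 2}}\,\#P .$$
Choosing $T$ large enough that $\log(CnT)/(T\log 2)\le\eta$ gives $\#P'\gtrsim\delta^{\eta}\#P$. Rounding $\delta$ to a power of $\Delta$ costs only constants depending on $\eta$, which are absorbed by the hypothesis $\delta<\delta(\eta)$.

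\textbf{Main obstacle.} The main point of care is the order of operations. If I worked top-down, trimming at coarse scales would not disturb finer counts, but trimming at fine scales afterwards would destroy the coarse counts. The bottom-up scheme avoids this because each trimming step removes whole subtrees with identical internal structure. The second point to check is that the number of dyadic classes per level is $O(nT)$, so that the per-level loss is only logarithmic in $\Delta^{-1}$. That is exactly what makes the total loss $\delta^{o_T(1)}$ as $T\to\infty$.
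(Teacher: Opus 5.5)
Your proposal is correct, and it is the standard bottom-up uniformization argument that \cite{Demeter_Wang} relies on: at each of the $m\approx \log(1/\delta)/T$ levels you pigeonhole the number of children into $O(nT)$ dyadic classes and trim whole subtrees, so finer levels stay uniform and the total loss is $\delta^{O(\log T/T)}$. The paper itself gives no proof of this lemma and simply imports it from \cite{Demeter_Wang}, so there is no different route to compare against.
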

A dyadic pigeonholing argument is required before applying their results. This is just technical. Similar to the process above, we claim there is $s_0\in [\delta^{100n},1]\cdot\norm{\AA_\delta f}^q_{L
        _x^q(X_\delta)(L^s_r(I))}$ such that 
\begin{equation}\label{Eq:DPForUsingUniform}
    \text{LHS of \eqref{Eq:IntermediateEstimate}}^q\lessapprox \sum_{\substack{Q\subseteq X_\delta;\\\norm{\AA_\delta f}^q_{L
        _x^q(Q)(L^s_r(I))}\sim s_0} } \norm{\AA_\delta f}^q_{L
        _x^q(Q)(L^s_r(I))}\sim s_0\#X',
\end{equation}
if we denote as $X'$ the centers of $Q$'s in the summation in \eqref{Eq:DPForUsingUniform}. Applying Lemma \ref{Lem:Uniformization} to $X'$ with $\eta\ll \epsilon$, we extract an $\eta$-uniform subset $X''\subseteq X'\subseteq X$ with $\#X''\gtrsim \delta^{\eta}\#X'$. Since $X$ is a Katz-Tao $(\delta,\alpha,\langle X\rangle_\alpha)$-set, it is easy to verify so are all of its subsets. This allows us to apply Lemma \ref{Lem:FactoringLemmaForKatzTaoSet} to factor $X''$ as a union of uniform Katz-Tao sets.
\begin{equation*}
    X''=\medcup_{i=1}^{\langle X\rangle_\alpha \delta^{o_\eta(1)}} {}^iX'',
\end{equation*}
where each ${^i}X''$ is a Katz-Tao $(\delta,\alpha,O_\eta(1))$-set. Then we have 
\begin{equation}\label{Eq:EstimateAfterFactoring}
    \text{RHS of \eqref{Eq:DPForUsingUniform}}\approx s_0\#X''\lessapprox \sum_{i=1}^{\langle X\rangle_\alpha \delta^{o_\eta(1)}}s_0\#{}^iX''=\sum_i\sum_{\substack{Q:x_Q\in {}^iX'';\\\norm{\AA_\delta f}^q_{L
        _x^q(Q)(L^s_r(I))}\sim s_0} } \norm{\AA_\delta f}^q_{L
        _x^q(Q)(L^s_r(I))}.
\end{equation}
For each $i$, the inner summation can be bounded using \eqref{Eq:Nu=1} if  $\eta$ is chosen so that $O_\eta(1)\lessapprox 1$.
\begin{equation}\label{Eq:ApplyNu=1}
    \sum_{\substack{Q:x_Q\in {}^iX''; \\\norm{\AA_\delta f}^q_{L
        _x^q(Q)(L^s_r(I))}\sim s_0}} \norm{\AA_\delta f}^q_{L
        _x^q(Q)(L^s_r(I))}= \norm{\AA_\delta f}^q_{L
        _x^q({}^iX_{\delta}'')(L^s_r(I))}\lessapprox \delta^{n-\alpha}\norm{f}^q_{L^p(\R^n)}.
\end{equation}
Plugging \ref{Eq:ApplyNu=1} back to \eqref{Eq:EstimateAfterFactoring}, we obtain 
\begin{equation*}
      \norm{\AA_\delta f}_{L
        _x^q(X_\delta)(L^s_r(I))}^q= \text{LHS of \eqref{Eq:IntermediateEstimate}}^q\lessapprox \sum_{i=1}^{\langle X\rangle_\alpha \delta^{o_\eta(1)}} \delta^{n-\alpha}\norm{f}^q_{L^p(\R^n)}\lessapprox \langle X\rangle_\alpha\delta^{n-\alpha}\norm{f}^q_{L^p(\R^n)}.
\end{equation*}
Taking the $q$-th root concludes the proof of Proposition \ref{Prop:Nu=1Reduction}.
\end{proof}

\subsection{ Duality argument}Now, we prove Proposition \ref{Prop:Discritization}, which is quite standard. 
\begin{proof}[Proof of Proposition \ref{Prop:Discritization}]Let $X\subseteq B\medcap \delta\mathbb Z^n$ be a Katz-Tao $(\delta,\alpha,C)$-set. By Lemma \ref{Lem:LocallyConstantProperty},
    \begin{equation}\label{Eq:DiscritizationOfTheEstimate}
          \norm{\AA_\delta f}_{L
        _x^q(X_\delta)(L^s_r(I))}\lesssim \Bigg(\sum_{x_i\in X}\delta^n\Big(\sum_{r_j\in R_{x_i}}\delta \cdot A_{10\delta
        }f(x_i,r_j)^s\Big)^{\frac{q}{s}} \Bigg)^{\frac{1}{q}}.
    \end{equation}
    After two rounds of duality argument, we have 
    \begin{equation}\label{Eq:Duality}
        \text{RHS of \eqref{Eq:DiscritizationOfTheEstimate}}=\delta^{\frac{n}{q}+\frac{1}{s}}\sup_{\{a_i\}_{i=1}^{\#X}:\sum_i a_i^{q'}=1}\sup_{\{b_{ij}\}_{j=1}^{\delta^{-1}}:\sum_j  b_{ij}^{s'}=1}\sum_{x_i}\sum_{r_j}a_ib_{ij}\AA_{10\delta}f(x_i,r_j).
    \end{equation}
    Unwinding the definition, separating $f$ and applying H\"older's inequality, we obtain
    \begin{equation*}
    \begin{aligned}
         \text{RHS of \eqref{Eq:Duality}}&=\delta^{\frac{n}{q}-\frac{1}{s'}}\sup_{\{a_i\}}\sup_{\{b_{ij}\}}\int_{\R^n}\Big(\sum_{x_i}\sum_{r_j}a_ib_{ij}\chi_{S_{10\delta}(x_i,r_j)}\Big)\cdot f\\
         &\leq \delta^{\frac{n}{q}-\frac{1}{s'}}\sup_{\{a_i\}}\sup_{\{b_{ij}\}} \norm{\sum_{x_i}\sum_{r_j}a_ib_{ij}\chi_{S_{10\delta}(x_i,r_j)}}_{L^{p'}(\R^n)}\cdot\norm{f}_{L^p(\R^n)}.
    \end{aligned}
    \end{equation*}
    To prove \eqref{Eq:Nu=1}, it suffices to prove 
    \begin{equation}\label{Eq:DiscritizedVersionWithAB}
       \sup_{\{a_i\}}\sup_{\{b_{ij}\}} \norm{\sum_{x_i}\sum_{r_j}a_ib_{ij}\chi_{S_{10\delta}(x_i,r_j)}}_{L^{p'}(\R^n)}\lessapprox \delta^{\frac{1}{s'}-\frac{\alpha}{q}}.
    \end{equation}
    Without loss of generality, we assume the left-hand side of \eqref{Eq:DiscritizedVersionWithAB} is $\geq \delta^{100}$. Otherwise the estimate holds trivially. Then similar as above, we can throw away those small summands.
    \begin{equation}\label{Eq:BeforeDPLowerBoundObtain}
          \sup_{\{a_i\}}\sup_{\{b_{ij}\}} \norm{\sum_{x_i}\sum_{r_j}a_ib_{ij}\chi_{S_{10\delta}(x_i,r_j)}}_{L^{p'}(\R^n)}\lessapprox  \sup_{\{a_i:a_i\geq \delta^{100}\}}\sup_{\{b_{ij}:b_{ij}\geq \delta^{100}\}} \norm{\sum_{x_i}\sum_{r_j}a_ib_{ij}\chi_{S_{10\delta}(x_i,r_j)}}_{L^{p'}(\R^n)}.
    \end{equation}
    After three rounds of dyadic pigeonholing argument, we obtain $a,b$ such that 
    \begin{itemize}
        \item  $\#\{i:a_i\sim a\}={N}$. For such $i$, $\#\{j:b_{ij}\sim b\}=m$;
        \item RHS of \eqref{Eq:BeforeDPLowerBoundObtain}$\lessapprox ab\norm{\sum_{i:a_i\sim a}\sum_{j:b_{ij}\sim b}\chi_{S_{10\delta}(x_i,r_j)}}_{L^{p'}(\R^n)}$
    \end{itemize}
    Note that ${N}a^{q'}\leq 1, mb^{s'}\leq 1$. Therefore, it suffice to prove 
    \begin{equation}\label{Eq:LastStepOfDiscritization}
       \norm{\sum_{i:a_i\sim a}\sum_{j:b_{ij}\sim b}\chi_{S_{10\delta}(x_i,r_j)}}_{L^{p'}(\R^n)}\lessapprox (m\delta)^{\frac{1}{s'}}\delta^{-\frac{\alpha}{q}}N^{\frac{1}{q'}}.
    \end{equation}
    Since $X'=\{x_i:a_i\sim a\}\subseteq X$ is also a Katz-Tao $(\delta,\alpha,C)$-set, we can apply \eqref{Eq:DiscretizedVersionOfMixedNormEstimate} with $X=X'$, which is exactly \eqref{Eq:LastStepOfDiscritization}.
\end{proof}

A corollary of \ref{Prop:Nu=1Reduction} and Proposition \ref{Prop:Discritization} is that to obtain Theorem \ref{Thm:MainTheorem}, it suffices to prove the following equivalent version.
\begin{proposition}\label{Prop:MainThmDiscretizedEndPointCase}
Let $X\subseteq B\medcap \delta\mathbb Z^n$ be a  {Katz-Tao $(\delta,\alpha,C)$-set} for some $C\lessapprox 1$. For each $x\in X$, Let $R_x$ be a subset of $\{r\in I\medcap \delta\mathbb Z\}$ with $\# R_x = m$. 
 \begin{enumerate}[label={\rm (\arabic*)}]
     \item When $\alpha\in (0,1],s \in [1, \frac{3}{1-\alpha}]$, \begin{equation}\label{Eq:KakeyaTypeEstiamteForAlphaIn0To1}
          \Big\|\sum_{x_i\in X}\sum_{r_j\in R_{x_i}}\chi_{S_{10\delta}(x_i,r_j)}\Big\|_{L^{3/2}(\R^n)}\lessapprox (m\delta)^{\frac{1}{s'}}\delta^{-\frac{\alpha}{3}}(\#X)^{\frac{2}{3}}.
     \end{equation}
      \item When $\alpha\in (1,2],s \in [1,\frac{2}{2-\alpha}]$, \begin{equation}\label{Eq:KakeyaTypeEstiamteForAlphaIn1To2}
          \Big\|\sum_{x_i\in X}\sum_{r_j\in R_{x_i}}\chi_{S_{10\delta}(x_i,r_j)}\Big\|_{L^{2}(\R^n)}\lessapprox (m\delta)^{\frac{1}{s'}}\delta^{-\frac{\alpha}{2}}(\#X)^{\frac{1}{2}}.
     \end{equation}
 \end{enumerate}
\end{proposition}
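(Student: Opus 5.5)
Since $m\delta\le 1$ and $\frac1{s'}$ increases with $s$, the right-hand sides of \eqref{Eq:KakeyaTypeEstiamteForAlphaIn0To1} and \eqref{Eq:KakeyaTypeEstiamteForAlphaIn1To2} decrease in $s$. So I will only prove the endpoints $s=\frac{3}{1-\alpha}$ and $s=\frac{2}{2-\alpha}$, where $\frac1{s'}=\frac{2+\alpha}{3}$ and $\frac1{s'}=\frac{\alpha}{2}$ respectively. Throughout, write $N=\#X$ and $\mathcal C=\{(x_i,r_j)\}$ for the family of $mN$ circles, and view $\mathcal C$ as a $\delta$-separated subset of $\R^3$. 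Its key non-concentration property is
$$\#\big(\mathcal C\medcap B^3(z,\rho)\big)\lessapprox (\rho/\delta)^{\alpha}\min\{m,\rho/\delta\},\qquad \rho\in[\delta,1],$$
which follows because $X$ is Katz-Tao $(\delta,\alpha,C)$ and each center carries at most $m$ radii.

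\textbf{Case (1), $\alpha\in(0,1]$.} I would expand $\int(\sum\chi)^{3/2}$ by dyadically pigeonholing the multiplicity $\mu$. The task is then to bound the measure of $E_\mu=\{y:\sum\chi(y)\sim\mu\}$ by roughly $\delta\cdot(mN)^{3/2}\mu^{-3/2}\,(m\delta)^{\frac{\alpha-1}{2}}\delta^{-\alpha/2}$; this is the numerology forced by the target bound, and it is checked by a routine computation. Following Wolff and Pramanik--Yang--Zahl, I would cover $E_\mu$ by $\delta\times\sqrt{\delta/t}$ rectangles that are tangent to $\gtrsim\mu$ of the annuli at relative tangency $t$, i.e. $\mu$-clamshells. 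I would then invoke the circle tangency bound $\#\mathcal R\lessapprox(\#\mathcal C'/\mu)^{3/2}$, which is valid for families $\mathcal C'$ that are Katz-Tao $(\delta,1)$ in $\R^3$. Our $\mathcal C$ is not such a family, since its dimension can reach $\alpha+1$. I would therefore partition $\mathcal C$ randomly, or via the radii slabs $\{r\in[k\ell,(k+1)\ell]\}$, into about $m\delta^{1-\frac{1}{\alpha+1}}$-many subfamilies. Each subfamily should satisfy the $(\delta,1)$ ball condition with constant $\lessapprox 1$, after using the display above together with $\alpha\le 1$. The bound on each subfamily is then summed, with the triangle inequality in $L^{3/2}$ costing at most a power of the number of pieces. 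The main obstacle is exactly this step: the naive split loses too much. I would first try splitting by the scale $\rho_*=m\delta$, at which the two terms of the $\min$ cross. Below $\rho_*$ the family is genuinely $(\alpha+1)$-dimensional but only over a small range, and there I would use the trivial concentric bound, namely that the $m$ annuli around one center are disjoint. Above $\rho_*$ the family is $\alpha$-dimensional and PYZ applies directly. Interpolating the two regimes should produce the exponent $\frac{2+\alpha}{3}$. Sharpness then follows from Proposition~\ref{Prop:necessary}.

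\textbf{Case (2), $\alpha\in(1,2]$.} Here the target squared is $m^{\alpha}N$. I would write $\|\sum\chi\|_2^2=\sum_{(x,r),(x',r')}|S_{10\delta}(x,r)\cap S_{10\delta}(x',r')|$. The diagonal pairs with $x=x'$ contribute at most $mN\delta\le m^{\alpha}N$, using $m\le\delta^{-1}$. For $x\ne x'$ I would use the standard estimate
$$|S\cap S'|\lesssim\frac{\delta^2}{\sqrt{(\delta+|x-x'|)(\delta+||x-x'|-|r-r'||)}},$$
whose singular part is the internal-tangency regime $|r-r'|\approx|x-x'|$. To control that regime I would apply the discretized slicing Theorem~\ref{Thm:DiscretizedSlicing}. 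After a rotation and pigeonholing, it lets me write $X\approx\bigcup_{y_0\in X_1}(\{y_0\}\times X_2^{y_0})$, where $X_1$ is $1$-dimensional and each slice $X_2^{y_0}$ is a Katz-Tao $(\delta,\alpha-1)$-set. For each pair of centers at distance $t$, summing over the radii differences gives a contribution of order $\delta^2 m\,t^{-1/2}\sum_k(\delta+|t-k\delta|)^{-1/2}\lesssim \delta\, m\,\min\{m,t/\delta\}^{1/2}t^{-1/2}$, up to logarithms. I would then sum over pairs of centers with $|x-x'|\sim t$ using the ball condition $\#\{x':|x-x'|\le t\}\lessapprox(t/\delta)^{\alpha}$. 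Pairs lying in different slices are transverse in a controlled way: after a small-cap reduction that restricts the tangency direction, the near-tangent pairs are forced into essentially the same slice. Within one slice only $(t/\delta)^{\alpha-1}$ partners remain at distance $t$, and this one-dimensional loss compensates exactly for the square-root singularity.

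The main obstacle in Case (2) is the bookkeeping of this reduction. I must show that the mixed pairs, in different slices and near tangency, are negligible after the small-cap decomposition. Once that is done, a sum over $\delta\le t\le 1$ of the form $\delta m N\sum_t (t/\delta)^{\alpha-1}\min\{m,t/\delta\}^{1/2}(t/\delta)^{1/2}\cdot\delta^{1/2}t^{-1/2}$ evaluates to $\lessapprox m^{\alpha}N$, with the worst scale being $t\sim m\delta$.
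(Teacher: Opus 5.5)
Your proposal has a genuine gap in Case (2), and Case (1) is left unfinished, although Case (1) can be closed inside your own framework.

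\textbf{Case (2) rests on a false geometric claim.} Take the slices $\{y_0\}\times X_2^{y_0}$ to be the paper's, i.e.\ parallel to the tangent of the chosen cap at its midpoint. By Lemma~\ref{Lem:GeometricLemma}(3), an internal near-tangency of $C(x,r)$ and $C(x',r')$ sits near the point of $C(x,r)$ in direction $\pm(x'-x)$. For it to fall on the cap, $x'-x$ must be nearly normal to the slices. So near-tangent pairs lie in \emph{different} slices. Pairs in the same slice are exactly the transversal ones, with $|C^*_\delta(x,r)\cap C^*_\delta(x',r')|\lesssim\delta^2/(|x-x'|+\delta)$.

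Slicing in the other direction does not rescue the claim. A cap of fixed width $\sim 1/100$ only confines near-tangent partners at distance $t$ to a cone of aperture $\sim 1/100$. That cone may contain $\sim(t/\delta)^{\alpha}$ centers spread over $\sim t/\delta$ slices.

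Consequently, the cross-slice near-tangent pairs cannot be shown negligible by pair counting. Lemma~\ref{Lem:GeometricLemma}(2) plus the $\alpha$-dimensional ball condition only gives $\norm{\sum\chi}_2^2\lessapprox m^{1+\alpha}N\delta+m^{3/2}N\delta^{3/2-\alpha}$. This exceeds $m^{\alpha}N$ by $(m\delta)^{-(\alpha-3/2)}$ when $\alpha>3/2$, which is precisely the range where the statement is new; for $\alpha\le 3/2$ this crude count already suffices.

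The missing idea is the paper's: never expand cross-slice products at all. Apply Minkowski's inequality over the $N/K$ slices, and expand the $L^2$ norm only inside one slice, where every intersection is transversal:
\[
\Big\|\sum_{x,r}\chi_{C^*_\delta(x,r)}\Big\|_2\le\frac{N}{K}\Big(Km\sum_{\theta}\min\Big\{\Big(\frac{\theta}{\delta}\Big)^{\alpha-1},K\Big\}\min\Big\{\frac{\theta}{\delta},m\Big\}\frac{\delta^2}{\theta}\Big)^{1/2}\lessapprox
\begin{cases}
N m^{1/2}\delta^{1/2}, & K\le m^{\alpha-1},\\
N K^{-1/2}m^{\alpha/2}\delta^{1/2}, & K\ge m^{\alpha-1}.
\end{cases}
\]
Both bounds are $\le m^{\alpha/2}N^{1/2}$ because the number of slices $N/K$ is $\lesssim\delta^{-1}$. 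Note also that Theorem~\ref{Thm:DiscretizedSlicing} only structures a refinement of $X$. In the dual form you must either apply it before dualizing, as the paper does, or iterate it on the discarded part.

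\textbf{Case (1): the obstacle you stop at is not real.} By your own display, the Katz--Tao $(\delta,1)$ constant of $\mathcal C\subset\R^3$ is
\[
\lesssim\sup_{\rho}(\rho/\delta)^{\alpha-1}\min\{m,\rho/\delta\}\le m^{\alpha},
\]
with the worst scale $\rho=m\delta$ and using $\alpha\le 1$. It is not $m\delta^{1-\frac{1}{\alpha+1}}$.

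So partition $\mathcal C$ randomly into $P\approx m^{\alpha}$ classes; Chernoff plus a union bound over $\delta^{-O(1)}$ balls makes each class a $(\delta,1)$-family with constant $\lessapprox 1$. Then apply Wolff/PYZ on each class, Minkowski, and H\"older in the form $\sum_k a_k^{2/3}\le P^{1/3}(\sum_k a_k)^{2/3}$. This gives
\[
\norm{\sum\chi}_{3/2}\lessapprox m^{\alpha/3}(\delta mN)^{2/3}=(m\delta)^{\frac{2+\alpha}{3}}\delta^{-\frac{\alpha}{3}}N^{\frac{2}{3}},
\]
which is exactly the endpoint, with no splitting at $\rho_*$ or interpolation. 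Your reduction for $|E_\mu|$ is also off. The statement forces $|E_\mu|\lessapprox m^{1+\alpha/2}N\delta\mu^{-3/2}$, and your expression is larger by $(N/\delta)^{1/2}$.

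This repaired route differs from the paper's. The paper reruns the PYZ two-ends and bilinear machinery inside parameter balls of radius $t$. It obtains $(\#\mathcal C_B\delta)^{3/2}t^{-1/2}$ for an arbitrary family, since the tangency bound needs no non-concentration on the circles, contrary to what you assumed. Only at the end does it insert $\#\mathcal C_B\le\#X_B\min\{m,t/\delta\}$ with $\#X_B\lessapprox(t/\delta)^{\alpha}$. Both arguments amount to $\norm{\sum\chi}_{3/2}^{3/2}\lessapprox K^{1/2}\delta\,\#\mathcal C$ with $K\lessapprox m^{\alpha}$. Yours is shorter because it uses Wolff's (or PYZ's) $(\delta,1)$ maximal estimate as a black box.
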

In what follows, our goal is to prove Proposition \ref{Prop:MainThmDiscretizedEndPointCase}.

\section{Case \texorpdfstring{$\alpha \in (0,1]$}{α∈(0,1]}: tangency bound and the two-scale argument}\label{sec:case_(0,1]}

The argument of Pramanik, Yang and Zahl \cite{PYZ24} is applied to tackle the  case $\alpha\in (0,1]$. 

\subsection{Geometric lemmas and incidence bound} We first record some well-known geometric facts, which can be found in \cite{Wolff_2003_BookWolffLectureNotes,Schlag98,PYZ24}.

For two circles $C_1=C(x,r)$ and $C_2=C(y,s)$ with $x,y\in B=B(0,1/4)$ and $r,s\in I=[1,2]$, define the \textit{tangency parameter} $\Delta(C_1,C_2)$ and \textit{distance parameter} $d(C_1,C_2)$ as 
\begin{align*}\label{Eq:DeltaAndD}
    \Delta(C_1,C_2)=\big||x-y|-|r-s|\big|,\quad
    d(C_1,C_2)=\big| |x-y|+|r-s| \big|.
\end{align*}
See the left picture of Figure \ref{fig:RelationOfTwoCIrcles}.  $\Delta(C_1,C_2)$ is the shorter distance between the intersection points of $C_1$, $C_2$ with the line joining their centers while $d(C_1,C_2)$ is the longer.

It can be checked that $C_1$ and $C_2$ are internally tangent if and only if $\Delta(C_1,C_2)=0$. If we view $C_1=C(x,r)$ and $C_2=C(y,s)$ as two points $(x,r)$ and $(y,s)$  in the parameter space $\R^3$, then $d(C_1,C_2)$ is also comparable to the distance between these two points. When the two circles $C_1$ and $C_2$ can be read from the context, we abbreviate $\Delta(C_1,C_2)$ and $d(C_1,C_2)$ as $\Delta$ and $d$. 

\begin{lemma}[ {\cite[Lemma 2.1]{Schlag98}} ]\label{Lem:GeometricLemma}
For small $\delta>0$, let $C_i^\delta,i=1,2$ be the $\delta$-neighborhood of $C_i$. 
\begin{enumerate}[label={\rm (\arabic*)}]
    \item If $C_1^\delta\medcap C_2^\delta\neq \emptyset
    $, then $|r-s|\leq |x-y|+2\delta$, which implies that $d\sim |x-y|$;
    \item $C_1^{\delta}\medcap C_2^\delta$ is contained in the union of $\delta$-neighborhoods of two arcs, each with length $\frac{\delta}{\sqrt{(\Delta+\delta)(d+\delta)}}$. This implies
    \begin{equation*}
        |C_1^{\delta}\medcap C_2^\delta|\lesssim \frac{\delta^2}{\sqrt{(\Delta+\delta)(d+\delta)}}.
    \end{equation*}
    \item $C_1^{\delta}\medcap C_2^\delta$ is contained in a $\delta$
-neighborhood of an arc centered at $x-r\mathrm{sgn}(r-s)\frac{x-y}{|x-y|}$, with length 
\begin{equation*}
    \lesssim \sqrt{\frac{\Delta+\delta}{d+\delta}}.
\end{equation*}
\end{enumerate}
    
\end{lemma}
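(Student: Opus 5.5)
The plan is to reduce everything to elementary trigonometry in a frame adapted to the two centres. Throughout, write $t:=|x-y|$ and use that $r,s\in[1,2]$ while $t\le 1/2$ (because $x,y\in B$). Under these constraints the circles can only be internally tangent, which is why $\Delta$ is the relevant parameter.

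\textbf{Item (1).} Take $z\in C_1^\delta\medcap C_2^\delta$. Then $\big||z-x|-r\big|\le\delta$ and $\big||z-y|-s\big|\le\delta$. The triangle inequality $\big||z-x|-|z-y|\big|\le t$ gives $|r-s|\le t+2\delta$. Consequently $t\le d\le 2t+2\delta$, so $d+\delta\sim t+\delta$. This is the precise sense of ``$d\sim|x-y|$'' that I will use later.

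\textbf{Item (2).} First, if $t\lesssim\delta$, the claimed arc length $\delta/\sqrt{(\Delta+\delta)(d+\delta)}\gtrsim 1$ is trivial, so assume $t\gg\delta$. Assume also, by symmetry, $r\ge s$. Write $z=x+\rho e(\theta)$ with $|\rho-r|\le\delta$, where $\theta$ is measured from the direction $(y-x)/t$. Then
\[
|z-y|^2=\rho^2+t^2-2\rho t\cos\theta=(\rho-t)^2+4\rho t\sin^2(\theta/2).
\]
Since all radii are $\sim1$, the condition $\big||z-y|-s\big|\le\delta$ is equivalent to $\big||z-y|^2-s^2\big|\lesssim\delta$. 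Replacing $\rho$ by $r$ costs only $O(\delta)$, so the condition becomes
\[
\big|A+4rt\,u^2\big|\lesssim\delta,\qquad u=\sin(\theta/2),\quad A=(r-t)^2-s^2=(r-t-s)(r-t+s).
\]
The factor $r-t+s$ is $\sim1$. Hence $|A|\sim|r-s-t|$, and $|r-s-t|=\Delta$ whenever $r-s\ge0$ (the case $r<s$ is symmetric). So $u^2$ must lie in an interval of length $\lesssim\delta/t$ around $u_0^2:=-A/(4rt)$, where $u_0^2\lesssim(\Delta+\delta)/t$.

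Solving for $u\ge0$ gives a set of length $\lesssim\min\big(\sqrt{\delta/t},\,\delta/(t u_0)\big)$. In the nondegenerate regime where the intersection is nonempty and $\Delta\gtrsim\delta$, one has $u_0\sim\sqrt{\Delta/t}$. In both regimes the length is therefore $\lesssim\delta/\sqrt{t(\Delta+\delta)}$.

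The two signs $\pm\theta$ give the two arcs. Using $d+\delta\sim t+\delta$ from (1) converts this into the stated arc length. The area bound follows by multiplying the arc length by the radial width $O(\delta)$.

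\textbf{Item (3).} The same computation localizes the whole intersection to $|\theta|\lesssim u_0+\sqrt{\delta/t}\lesssim\sqrt{(\Delta+\delta)/(d+\delta)}$. The centre of this range is $\theta=0$, i.e. the point $x+r\,(y-x)/t$, and its length is of the stated order. When $r<s$, I would run the same argument with the roles of the circles or the direction flipped, which produces the sign factor $\mathrm{sgn}(r-s)$.

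\textbf{Main obstacle.} The delicate step is the bookkeeping in (2) and (3) that ensures only the region near $\theta=0$ can contribute. The competing branch near $\theta=\pi$ would correspond to external tangency. It is excluded because there $|z-y|\approx r+t$, which would require $s\approx r+t$ while $\big||z-y|-s\big|\le\delta$; after a relabelling of the roles of the circles this is again internal tangency. I also need to handle uniformly the transition regimes $\Delta\lesssim\delta$ and $t\lesssim\delta$, which is exactly what the ``$+\delta$'' terms absorb.
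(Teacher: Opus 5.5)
The paper gives no proof of this lemma; it quotes it from Schlag \cite{Schlag98} (and the other references cited there). Your argument is correct and self-contained, and it is essentially the standard elementary proof behind those citations. You expand $|z-y|^2$ by the law of cosines about the tangency direction, reduce to $|A+4rt\sin^2(\theta/2)|\lesssim\delta$ with $|A|\sim\Delta$, and read off the two regimes. For $\Delta\lesssim\delta$ you get one arc of angular length $\sqrt{\delta/t}$; for $\Delta\gg\delta$ you get two arcs of length $\delta/\sqrt{t\Delta}$ near $\theta_0\sim\sqrt{\Delta/t}$. One thing your version adds over the citation is the correct reading of ``$d\sim|x-y|$'', namely $d+\delta\sim|x-y|+\delta$. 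That is what (2) and (3) actually use, and the literal statement fails when $|x-y|\ll\delta$.

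Some steps should be written out, though none is a real gap.

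\textbf{Passing from $u$ to $\theta$.} Converting from $u=\sin(\theta/2)$ to $\theta$ needs $\cos(\theta/2)\gtrsim 1$ on the admissible set. Under $r\ge s$ this follows from $u_0^2=\frac{s^2-(r-t)^2}{4rt}\le\frac12-\frac{t}{4r}$. Hence for $t\ge C\delta$ every admissible $u^2$ is at most, say, $0.6$. This bound is also what removes the branch near $\theta=\pi$, so your ``main obstacle'' is already settled by the algebra.

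\textbf{The branch near $\theta=\pi$.} This branch is not external tangency, which is impossible since $r+s\ge 2>t$. It is internal tangency with $C_1$ inside $C_2$, i.e.\ $s-r\approx t$. It is simply empty when $r\ge s$ and $t\gg\delta$.

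\textbf{The claim $u_0\sim\sqrt{\Delta/t}$.} This needs $\Delta>2\delta$, so that $A<0$. That follows from (1): if $A>0$ then $r-s>t$, and so $\Delta=r-s-t\le 2\delta$.

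\textbf{Item (3).} Item (3) singles out $C_1$, so ``by symmetry $r\ge s$'' does not apply verbatim. Swapping the circles would need an extra step moving the arc's centre from $C_2$ to $C_1$, a displacement of size $\Delta$. The cleaner route is the direction flip you mention. For $r<s$ write $\cos\theta=2\cos^2(\theta/2)-1$, so that $|z-y|^2=(\rho+t)^2-4\rho t\cos^2(\theta/2)$. Then use $(r+t)^2-s^2=(r+t-s)(r+t+s)$ with $|r+t-s|=\Delta$. This centres the arc at $\theta=\pi$, i.e.\ at $x+r\frac{x-y}{|x-y|}$, as the factor $\mathrm{sgn}(r-s)$ in the statement requires. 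Finally, the case $t\lesssim\delta$ of (3) should be noted as trivial: then $d+\delta\lesssim\delta\le\Delta+\delta$, so the bound is $\gtrsim 1$.
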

See the right picture of Figure \ref{fig:RelationOfTwoCIrcles}.

\begin{figure}
    \centering

    \begin{minipage}{0.45\textwidth}
        \centering
        \tikzset{every picture/.style={line width=0.75pt}} 

\begin{tikzpicture}[x=0.75pt,y=0.75pt,yscale=-1,xscale=1]

\draw    (41,461.26) -- (339,459.81) ;
\draw   (55.77,461.74) .. controls (55.36,404.72) and (101.25,358.17) .. (158.26,357.77) .. controls (215.28,357.36) and (261.83,403.25) .. (262.23,460.26) .. controls (262.64,517.28) and (216.75,563.83) .. (159.74,564.23) .. controls (102.72,564.64) and (56.17,518.75) .. (55.77,461.74) -- cycle ;
\draw   (127.45,461.08) .. controls (127.12,414.57) and (164.55,376.59) .. (211.06,376.26) .. controls (257.58,375.93) and (295.55,413.36) .. (295.88,459.87) .. controls (296.22,506.39) and (258.78,544.36) .. (212.27,544.69) .. controls (165.76,545.03) and (127.78,507.59) .. (127.45,461.08) -- cycle ;
\draw [color={rgb, 255:red, 208; green, 2; blue, 27 }  ,draw opacity=1 ][line width=2.25]    (262.23,460.26) -- (295.88,459.87) ;
\draw    (159,461) -- (211.67,460.48) ;
\draw [shift={(211.67,460.48)}, rotate = 359.43] [color={rgb, 255:red, 0; green, 0; blue, 0 }  ][fill={rgb, 255:red, 0; green, 0; blue, 0 }  ][line width=0.75]      (0, 0) circle [x radius= 3.35, y radius= 3.35]   ;
\draw [shift={(159,461)}, rotate = 359.43] [color={rgb, 255:red, 0; green, 0; blue, 0 }  ][fill={rgb, 255:red, 0; green, 0; blue, 0 }  ][line width=0.75]      (0, 0) circle [x radius= 3.35, y radius= 3.35]   ;
\draw [color={rgb, 255:red, 74; green, 144; blue, 226 }  ,draw opacity=1 ][line width=2.25]    (55.77,461.74) -- (127.45,461.08) ;
\draw  [dash pattern={on 0.84pt off 2.51pt}]  (211.67,460.48) -- (271.67,400.81) ;
\draw  [dash pattern={on 0.84pt off 2.51pt}]  (159,461) -- (72.33,406.81) ;

\draw (153,470.73) node [anchor=north west][inner sep=0.75pt]    {$x$};
\draw (210,471.73) node [anchor=north west][inner sep=0.75pt]    {$y$};
\draw (44,392.4) node [anchor=north west][inner sep=0.75pt]    {$C_{1}$};
\draw (285,393.4) node [anchor=north west][inner sep=0.75pt]    {$C_{2}$};
\draw (266,472.88) node [anchor=north west][inner sep=0.75pt]  [color={rgb, 255:red, 208; green, 2; blue, 27 }  ,opacity=1 ]  {$\Delta ( C_{1} ,C_{2})$};
\draw (61.33,470.21) node [anchor=north west][inner sep=0.75pt]  [color={rgb, 255:red, 74; green, 144; blue, 226 }  ,opacity=1 ]  {$d( C_{1} ,C_{2})$};
\draw (109.33,412.21) node [anchor=north west][inner sep=0.75pt]    {$r$};
\draw (230,412.88) node [anchor=north west][inner sep=0.75pt]    {$s$};

\end{tikzpicture}
    \end{minipage}
    \hfill
    \begin{minipage}{0.45\textwidth}
        \centering

\tikzset{every picture/.style={line width=0.75pt}} 

\begin{tikzpicture}[x=0.75pt,y=0.75pt,yscale=-1,xscale=1]

\draw    (152,164.45) -- (450,163) ;
\draw   (166.77,164.92) .. controls (166.36,107.91) and (212.25,61.36) .. (269.26,60.95) .. controls (326.28,60.54) and (372.83,106.43) .. (373.23,163.45) .. controls (373.64,220.46) and (327.75,267.01) .. (270.74,267.42) .. controls (213.72,267.83) and (167.17,221.94) .. (166.77,164.92) -- cycle ;
\draw   (238.12,165.01) .. controls (237.79,118.23) and (275.44,80.04) .. (322.22,79.7) .. controls (369,79.37) and (407.19,117.02) .. (407.53,163.8) .. controls (407.86,210.58) and (370.21,248.78) .. (323.43,249.11) .. controls (276.65,249.45) and (238.45,211.79) .. (238.12,165.01) -- cycle ;
\draw    (270,164.19) -- (322.82,164.41) ;
\draw [shift={(322.82,164.41)}, rotate = 0.24] [color={rgb, 255:red, 0; green, 0; blue, 0 }  ][fill={rgb, 255:red, 0; green, 0; blue, 0 }  ][line width=0.75]      (0, 0) circle [x radius= 1.34, y radius= 1.34]   ;
\draw [shift={(270,164.19)}, rotate = 0.24] [color={rgb, 255:red, 0; green, 0; blue, 0 }  ][fill={rgb, 255:red, 0; green, 0; blue, 0 }  ][line width=0.75]      (0, 0) circle [x radius= 1.34, y radius= 1.34]   ;
\draw  [dash pattern={on 0.84pt off 2.51pt}]  (322.82,164.41) -- (270.7,229.88) ;
\draw  [dash pattern={on 0.84pt off 2.51pt}]  (270,164.19) -- (183.33,110) ;
\draw  [color={rgb, 255:red, 74; green, 144; blue, 226 }  ,draw opacity=1 ][fill={rgb, 255:red, 74; green, 144; blue, 226 }  ,fill opacity=0.75 ] (320.92,72.05) .. controls (321.14,71.75) and (321.57,71.68) .. (321.87,71.9) -- (337.02,83.09) .. controls (337.32,83.32) and (337.38,83.74) .. (337.16,84.04) -- (335.95,85.68) .. controls (335.73,85.98) and (335.3,86.05) .. (335,85.82) -- (319.85,74.64) .. controls (319.55,74.41) and (319.49,73.99) .. (319.71,73.69) -- cycle ;
\draw [color={rgb, 255:red, 208; green, 2; blue, 27 }  ,draw opacity=1 ]   (373.23,163.45) ;
\draw [shift={(373.23,163.45)}, rotate = 0] [color={rgb, 255:red, 208; green, 2; blue, 27 }  ,draw opacity=1 ][fill={rgb, 255:red, 208; green, 2; blue, 27 }  ,fill opacity=1 ][line width=0.75]      (0, 0) circle [x radius= 3.35, y radius= 3.35]   ;
\draw  [draw opacity=0][line width=1.5]  (317.94,71.04) .. controls (352.5,83.54) and (377.55,120.43) .. (377.55,164) .. controls (377.55,209.1) and (350.7,247.05) .. (314.25,258.19) -- (292.09,164) -- cycle ; \draw  [color={rgb, 255:red, 208; green, 2; blue, 27 }  ,draw opacity=1 ][line width=1.5]  (317.94,71.04) .. controls (352.5,83.54) and (377.55,120.43) .. (377.55,164) .. controls (377.55,209.1) and (350.7,247.05) .. (314.25,258.19) ;  
\draw  [draw opacity=0][line width=1.5]  (315.67,78.47) .. controls (346.58,91.65) and (368.55,125.2) .. (368.55,164.5) .. controls (368.55,205.43) and (344.73,240.11) .. (311.8,252.06) -- (286.55,164.5) -- cycle ; \draw  [color={rgb, 255:red, 208; green, 2; blue, 27 }  ,draw opacity=1 ][line width=1.5]  (315.67,78.47) .. controls (346.58,91.65) and (368.55,125.2) .. (368.55,164.5) .. controls (368.55,205.43) and (344.73,240.11) .. (311.8,252.06) ;  
\draw [color={rgb, 255:red, 208; green, 2; blue, 27 }  ,draw opacity=1 ][line width=1.5]    (311.8,252.06) -- (314.25,258.19) ;
\draw [color={rgb, 255:red, 208; green, 2; blue, 27 }  ,draw opacity=1 ][line width=1.5]    (315.67,78.47) -- (317.94,71.04) ;
\draw [color={rgb, 255:red, 208; green, 2; blue, 27 }  ,draw opacity=1 ]   (322,72) ;
\draw  [color={rgb, 255:red, 74; green, 144; blue, 226 }  ,draw opacity=1 ][fill={rgb, 255:red, 74; green, 144; blue, 226 }  ,fill opacity=0.75 ] (330.94,242.29) .. controls (330.72,241.99) and (330.3,241.93) .. (329.99,242.15) -- (314.85,253.34) .. controls (314.55,253.56) and (314.48,253.99) .. (314.7,254.29) -- (315.92,255.93) .. controls (316.14,256.23) and (316.56,256.29) .. (316.87,256.07) -- (332.01,244.88) .. controls (332.31,244.66) and (332.38,244.24) .. (332.16,243.93) -- cycle ;

\draw (253,166.92) node [anchor=north west][inner sep=0.75pt]    {$x$};
\draw (303,167.12) node [anchor=north west][inner sep=0.75pt]    {$y$};
\draw (173,72.59) node [anchor=north west][inner sep=0.75pt]    {$C_{1}$};
\draw (220,202.59) node [anchor=north west][inner sep=0.75pt]    {$C_{2}$};
\draw (194.33,127.4) node [anchor=north west][inner sep=0.75pt]    {$r$};
\draw (271.94,203.86) node [anchor=north west][inner sep=0.75pt]    {$s$};
\draw (341.16,240.33) node [anchor=north west][inner sep=0.75pt]  [font=\scriptsize,color={rgb, 255:red, 74; green, 144; blue, 226 }  ,opacity=1 ]  {$\frac{\delta }{\sqrt{( \Delta +\delta )( d+\delta )}}$};
\draw (372,92.4) node [anchor=north west][inner sep=0.75pt]  [font=\scriptsize,color={rgb, 255:red, 208; green, 2; blue, 27 }  ,opacity=1 ]  {$\sqrt{\frac{\Delta +\delta }{d+\delta }}$};

\end{tikzpicture}

    \end{minipage}

    \caption{circle relation}
    \label{fig:RelationOfTwoCIrcles}
\end{figure}
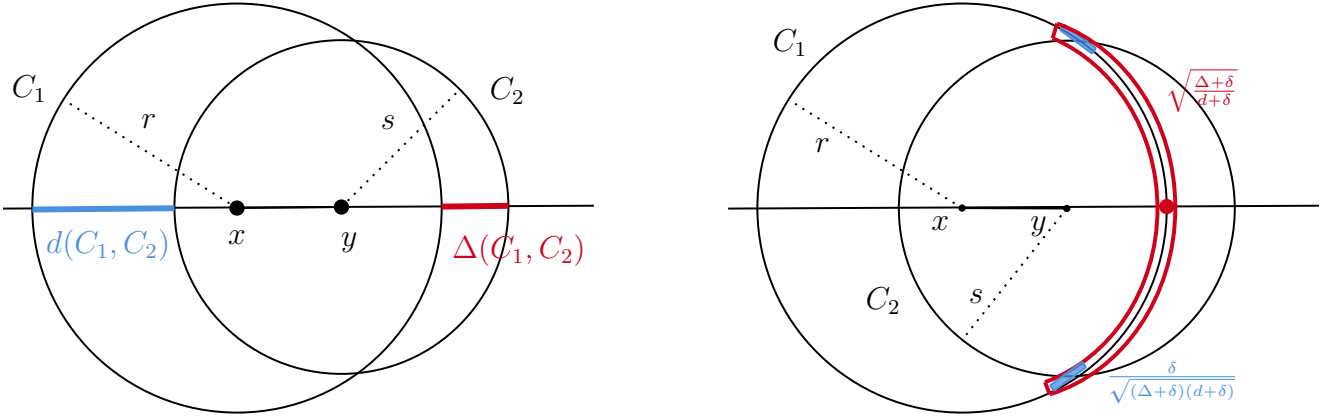

The next result of incidence bound is originally by Wolff \cite{Wolff97}, and is modified by Pramanik, Yang and Zahl \cite{PYZ24} to fit curves with low regularity. In our setting of circles, Wolff's result suffices, whereas for the cinematic curves, a more general version \cite{PYZ24} needs to be invoked.

\subsection{Proof Sketch of the item (1) of Proposition \ref{Prop:MainThmDiscretizedEndPointCase}}. 
Since our proof is a tiny modification of the argument of  Pramanik, Yang, and Zahl \cite{PYZ24}, we will give a sketch of the proof and intuitive explanations.

In what follows, we may identify the circles in $\R^2$ and the points in parameter space and denote by $\mathcal C=\{(x_i,r_j):x_i\in X, r_j\in R_{x_i}\}$ the collection of all circles we consider. Then to prove \eqref{Eq:KakeyaTypeEstiamteForAlphaIn0To1}, it suffices to prove 
\begin{equation}\label{Eq:AlphaIn0To1}
    \norm{\sum_{C\in \mathcal C}
    \chi_{C^\delta}}_{L^{3/2}(\R^2)}^{3/2}\leq C_\eps\delta^{-\eps} (m\delta)^{\frac{3}{2s'}}\delta^{-\frac{\alpha}{2}}\#X.
\end{equation}
\medskip

\noindent\textbf{Constant multiplicity reduction.} 
Let $\mu(x):=\sum_{C\in \mathcal C}
    \chi_{C^\delta}(x)$ be the multiplicity of $x\in \R^2$. Unify this quantity via dyadic pigeonholing.
\begin{equation*}
    \norm{\sum_{C\in \mathcal C}
    \chi_{C^\delta}}_{L^{3/2}(\R^2)}^{3/2}\sim \sum_{\mu:\text{dyadic}}\mu^{3/2}|E_\mu|,
\end{equation*}
where dyadic number $\mu\in [1,\delta^{-100}]$, $E_\mu=\{x\in \R^2:\mu(x)\sim \mu\}$. Therefore, there is $\mu_0$ and $E_{\mu_0}$, which slightly abusing the notations, we still denote as $\mu$ and $E$,  such that 
\begin{equation}\label{Eq:Collet1}
    \norm{\sum_{C\in \mathcal C}
    \chi_{C^\delta}}_{L^{3/2}(\R^2)}^{3/2}\approx \mu^{3/2} |E|
\end{equation}

 Now we endow the set of circles with weak uniformity via two-ends reduction. This helps us to localize the typical tangency parameter and distance parameter. 
\medskip

\noindent\textbf{Two-ends reduction for the distance parameter.} 
For each $x\in E$, consider the supremum 
\begin{equation}\label{Eq:MaximizerTwoEnds}
    \sup_{B(C,t)}\frac{\#\mathcal C(x)\medcap B(C,t)}{t^{\eps}},
\end{equation}
where $\mathcal C(x)\subseteq \mathcal C$ is the circles in $\mathcal C$ whose $\delta$-neighborhoods contain $x$ and $B(C,t)\subseteq B\times [1,2]\subset \R^3$ is the ball centered at $C$ with radius $t$ in the parameter space of circles. The supremum is taken over all $C\in B(0,1/4)\times [1,2]$ and $t\in [\delta,3]$. Let the maximizer be $B(C_x,t_x)$, then we have the correspondence:
\begin{equation*}
    E\ni x\mapsto B(C_x,t_x)\mapsto t_x.
\end{equation*}
Dyadic pigeonholing $\{t_x:x\in E\}$, we obtain a dyadic number $t\in [\delta,3]$, $E_t\subset E$ such that $|E_t|\approx |E|$ and $t_x\sim t$ for all $x\in E_t$. We still denote $E_t$ as $E$.

Cover $\mathcal C\subseteq B(0,1/4)\times [1,2]$ by finitely overlapping balls $B$ with radius $t$. For each $x\in E$, if $B(C_x,t)\medcap B\neq \emptyset$, then $\mathcal C(x)\subseteq 3B\medcap \mathcal C$, which we denote as $\mathcal C_B$. Since $\{B\}$ is a finitely overlapping cover of $\mathcal C$, each $B(C_x,t)$ intersects with at most $O(1)$ many $B$'s. If we denote as $E_B$ the points $x\in E$ whose $B(C_x,t)$ intersects $B$, then $\{E_B\}_B$ forms a finitely overlapping of $E$. 
From now on, we will focus on each such $B$. For each $x\in E_B$, \eqref{Eq:MaximizerTwoEnds} implies that  \begin{equation}\label{Eq:TwoEndsMaximizerForT}
    \#\mathcal C_B(x):=\#\mathcal C(x)\medcap 3B\geq \#\mathcal C(x)\medcap B(C_x,t)\gtrsim t^{\eps}\#\mathcal C(x)=t^{\eps}\mu.
\end{equation}
Dyadic pigeonholing $\{\#\mathcal C_B(x)\}_{x\in E_B}$, we obtain $\mu_1\approx \mu$ and $E_{\mu_1,B}\subseteq E_B$ with $|E_{\mu_1,B}|\approx |E_B|$ such that $\#\mathcal C_B(x)\sim \mu_1$ for all $x\in E_{\mu_1,B}$. As usual, we still denote it as $E_B$.
\medskip

\noindent\textbf{Two-ends reduction for the tangency parameter.}
For $x\in E_B$, consider the supremum 
\begin{equation*}
    \sup_{\substack{\bar C\in\mathcal C_B(x), 
    \Delta\in [\delta,6t]}}\frac{\#\{C\in \mathcal C_B(x):\Delta(C,\bar C)\leq \Delta\}}{\Delta^{\eps^2}}.
\end{equation*}
Note that by the first item of Lemma \ref{Lem:GeometricLemma}, it suffices to assume $\Delta\leq 6t.$
Let $\bar C_x, \Delta_x$ be the maximizer. Then we have the correspondence
\begin{equation*}
   E_B\ni x\mapsto \Delta_x.
\end{equation*}
Dyadic pigeonholing $\{\Delta_x:x\in E_B\}$, we obtain a dyadic number $\Delta\in [\delta,6t]$, $E_{\Delta,B}\subset E_B$ such that $|E_{\Delta,B}|\approx |E_B|$ and $\Delta_x\sim \Delta$ for all $x\in E_{\Delta,B}$. We still denote $E_{\Delta,B}$ as $E_B$.
This finishes our localization process. Note that 
\begin{equation}\label{Eq:Collect2}
    |E|\sim \sum_B|E_B|,\quad \#\mathcal C\sim \sum_B\#\mathcal C_B, \quad \mu^{3/2}|E|\approx \sum_B\mu_1^{3/2}|E_B|,
\end{equation}
and we will prove a similar \eqref{Eq:AlphaIn0To1} for each $B$. Note that $\mu_1$ in fact depends on $B$, but this is not relevant to our purpose. For each $B$, 
\begin{equation}\label{Eq:Collect3}
    \mu_1^{3/2}|E_B|=\norm{\sum_{C\in \mathcal C_B}\chi_{C^\delta}}_{L^{3/2}(E_B)}^{3/2}.
\end{equation}
\medskip

\noindent\textbf{Fine scale rectangle.}
As indicated by the item 2 of Lemma \ref{Lem:GeometricLemma}, $E_B$ is a union of disjoint fine scale (curvilinear) rectangles with dimensions $\delta\times {\delta}/{\sqrt{(\Delta+\delta)(t+\delta)}}$, which means that the typical intersection pattern of the circles in $\mathcal C_B$ is the fine scale rectangle. On each such rectangle, the multiplicity is $\mu_1$. It is remained to estimate how many fine scale rectangles are there in the plane. However, unfortunately, we cannot directly count the number of such fine scale rectangles via Wolff's incidence bound (or its variants). To explain what exactly the incidence bound is, we introduce the following definitions.
\medskip

\noindent\textbf{Tangency rectangle and incidence bound.}
 A \textit{$(\delta,t)$-rectangle} $R$ is a $\delta$-neighborhood of an arc of circles in $B(0,1/4)\times [1,2]$ with length $\sqrt{\delta/t}$. We say two rectangles $R,R'$ are \textit{$\lambda$-comparable} if there is another $(\lambda\delta,t)$-rectangle that contains both of $R$ and $R'$. Otherwise we say they are incomparable. Wolff's tangency bound is to estimate the number of such incomparable $(\delta,t)$-rectangles.
 \begin{proposition}[incidence bound, \cite{PYZ24}]\label{Prop:IncidenceBound}
     Assume $\mathcal C$ is a set of circles. Let $\delta\lesssim t$ and $\mathcal W,\mathcal B\subseteq \mathcal C$ be finite sets. Suppose $\mathcal W\medcup \mathcal B$ has diameter at most $6t$ and the any pair $(W,B)\in \mathcal W\times \mathcal B$ is $t/10$-separated.

     Let $\mathcal R$ be a collection of pairwise 100-incomparable $(\delta,t)$-rectangles. Suppose each rectangle in $\mathcal R$ is tangent to at least $\mu$ many elements of $\mathcal W$ and at least $\nu$ many elements of $\mathcal B$. Then 
     \begin{equation*}
         \#\mathcal R\lessapprox \Big(\frac{\#\mathcal W}{\mu}+\frac{\#\mathcal B}{\nu}\Big)^{3/2}.
     \end{equation*} 
 \end{proposition}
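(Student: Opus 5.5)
The plan is to prove Proposition~\ref{Prop:IncidenceBound} in two stages. First, a rescaling reduces to the case $t\sim 1$. Second, a Wolff-type cell-decomposition argument, in which the bipartite separation between $\mathcal W$ and $\mathcal B$ plays the role that nondegeneracy plays in Wolff's original proof \cite{Wolff97}.

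\textbf{Step 1: rescaling to $t\sim1$.} Since $\mathcal W\medcup\mathcal B$ lies in a ball of radius $6t$ in parameter space, after translating I will apply the standard (Lorentz/parabolic) rescaling adapted to circles near a fixed circle. It sends circles whose parameters lie within $O(t)$ of a fixed circle to curves whose parameters are $O(1)$-spread, and it sends $(\delta,t)$-rectangles to $(\delta/t,1)$-rectangles. Comparability and tangency are preserved up to constant factors in the parameter $\lambda$. So it suffices to treat $t=1$ with $\delta$ replaced by $\delta/t$. The rescaled curves are no longer exact circles, but they form a cinematic family, which is exactly the generality of \cite{PYZ24}. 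Alternatively, for exact circles one can note that tangency at scale $\delta$ with $d\sim t$ depends only on $\Delta$, and argue directly.

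\textbf{Step 2: elementary bipartite counting.} The basic input is Lemma~\ref{Lem:GeometricLemma}(3). If $W\in\mathcal W$ and $B\in\mathcal B$ are both tangent to a rectangle $R$, then $\Delta(W,B)\lesssim\delta$ and $d(W,B)\gtrsim 1$. Hence $W^\delta\medcap B^\delta$ lies in the $\delta$-neighbourhood of an arc of length $\lesssim\sqrt{\delta}$. Consequently the pair $(W,B)$ is tangent to at most $O(1)$ pairwise $100$-incomparable rectangles.

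Double counting triples $(R,W,B)$ gives
\[
\#\mathcal R\lesssim \frac{\#\mathcal W}{\mu}\cdot\frac{\#\mathcal B}{\nu}.
\]
A three-circle version is also available: three pairwise separated circles, internally tangent within $\delta$ to a common circle, determine it up to a $\delta^{O(1)}$-ball. This yields Kővári--Sós--Turán-type bounds. Neither bound alone gives the exponent $3/2$, but each serves as the base case inside cells.

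\textbf{Step 3: Clarkson--Shor cutting.} Set $N=\#\mathcal W/\mu+\#\mathcal B/\nu$. I will take a random sample of $\sim N$-proportionally chosen circles, roughly $r\approx (\#\mathcal W+\#\mathcal B)/\min(\mu,\nu)$ of them, lifted to $\R^3$ via Wolff's lifting. Triangulating the arrangement produces $O(r^3)$ cells, each crossed by few circles. Equivalently, one can use polynomial partitioning in the lifted space, as in \cite{PYZ24}.

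A rectangle tangent to $\mu$ circles of $\mathcal W$ is, with high probability, "hit" by the sample unless it sits in a cell containing a proportional share of its tangent circles. Inside each cell I apply the bound of Step 2. Summing over cells with the parameters balanced then yields
\[
\#\mathcal R\lessapprox N^{3/2}.
\]
The $\delta^{-\eps}$ losses come from the $\log$ factors of the sampling and from pigeonholing $\mu,\nu$ to dyadic values.

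\textbf{Main obstacle.} I expect the difficulty to be in Step 3, specifically the rectangles that straddle cell walls at scale $\delta$. Circles are only tangent to within $\delta$, so the cell structure must be built with $\delta$-thickened walls. One then has to show that the boundary rectangles, tangent to many circles from several cells, can be handled by induction on $N$ or by the algebraic structure of the walls. This is precisely where \cite{Wolff97} uses the combinatorial-geometry machinery and \cite{PYZ24} uses their two-ends and curvature arguments. My first attempt would be to import their treatment verbatim after the rescaling of Step 1, checking only that bipartite separation replaces their nondegeneracy hypothesis.
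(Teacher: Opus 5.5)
The paper does not prove this proposition. It is quoted from Wolff \cite{Wolff97} (for circles) and Pramanik--Yang--Zahl \cite{PYZ24}. There the bipartite bound reads $\#\mathcal R\lessapprox(\frac{\#\mathcal W\,\#\mathcal B}{\mu\nu})^{3/4}+\frac{\#\mathcal W}{\mu}+\frac{\#\mathcal B}{\nu}$. The displayed estimate follows from this by AM--GM, using $\#\mathcal W\ge\mu$ whenever $\mathcal R\neq\emptyset$.

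Your Step~3 also ends by importing the Wolff/PYZ argument verbatim, so in substance your proposal is the same citation. Step~1 is unnecessary: those bounds are already stated for $(\delta,t)$-rectangles. Moreover, after rescaling you no longer have circles, so Lemma~\ref{Lem:GeometricLemma}, which Step~2 relies on, is not available. The trivial bound $\#\mathcal R\lesssim\frac{\#\mathcal W}{\mu}\cdot\frac{\#\mathcal B}{\nu}$ in Step~2 is correct.

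The parts you argue yourself would not work as written.

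First, the three-circle claim is false. The circles $C((c,0),1+c)$, $0\le c\le\frac14$, all pass through $(-1,0)$ with a vertical tangent line. They are pairwise internally tangent ($\Delta=0$), yet pairwise separated in parameter space for separated $c$, and every member of this pencil is tangent to any three of them. A usable discretized three-circle lemma needs a quantitative nondegeneracy of the triple, such as separated points of tangency. It is exactly the pairwise incomparability of $\mathcal R$ that lets one discard such clamshell configurations. That, not the trivial bound, is what yields the K\H{o}v\'ari--S\'os--Tur\'an base case.

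Second, in Step~3 rectangles do not sit in cells. In the lifted space a circle of $\mathcal B$ is a point, and a circle $w\in\mathcal W$ contributes the thickened cone $\{b:\Delta(w,b)\lesssim\delta\}$. The circles tangent to a fixed $(\delta,t)$-rectangle form a thin neighbourhood of a one-parameter pencil, which typically meets many cells. So $\mu$- and $\nu$-richness are not inherited by cells, and no rich bound can be applied cell by cell.

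A standard way to organize the argument is as follows:
\begin{enumerate}
\item Reduce to $\mu=\nu=1$ by sampling $\mathcal W$ and $\mathcal B$ independently with probabilities $\sim1/\mu$ and $\sim1/\nu$; each rectangle keeps a tangent pair with probability $\gtrsim1$.
\item Bound the number of pairwise incomparable rectangles carrying a tangent pair by a cutting of the point--cone arrangement, using the K\H{o}v\'ari--S\'os--Tur\'an base case.
\item Choose the cutting parameter as for point--surface incidences in $\R^3$, which is what produces the exponent $3/4$.
\end{enumerate}

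Finally, you rightly flag the treatment of tangencies near the $\delta$-thickened cell walls as the main obstacle. That treatment is the actual content of \cite{Wolff97,PYZ24}, and you do not supply it, so the proposal does not go beyond the citation.
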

\medskip

\noindent\textbf{Coarse scale rectangle.}
With this proposition, we can bound the number of the fine scale rectangles formed by $\mathcal C_B$ only  when $\sqrt{\delta/t}=\delta/\sqrt{(\Delta+\delta)(t+\delta)}$, which is merely the case that $\Delta\sim \delta.$  To tackle arbitrary $\Delta\in [\delta,6t], \delta \leq t\leq 3$, in light of this fact and the item 3 of Lemma \ref{Lem:GeometricLemma}, we consider the $\Delta$-neighborhood of the circles in $\mathcal C_B$. Then all circles in $\mathcal C_B$ are $\Delta$-tangent and we can bound the number of $(\Delta,t)$-rectangles. These are the \textit{coarse scale rectangles}.

After some uniformization arguments for the quantities of rectangles, to count the number of fine scale rectangles, we first count the number of fine scale rectangles in each coarse scale rectangle via a bilinear $L^2$ argument and then count the number of coarse scale rectangles via incidence bound. 
\medskip

\noindent\textbf{Bilinear $L^2$ argument.}
Let $R\in \mathcal R$ be a typical coarse scale rectangle, then 
\begin{equation*}
     \norm{\sum_{C\in \mathcal C_B}\chi_{C^\delta}}_{L^{2}(E_B)}^{2}\approx \sum_{R\in \mathcal R} \norm{\sum_{C\in \mathcal C_B}\chi_{C^\delta}}_{L^{2}(E_B\medcap R)}^{2}.
\end{equation*}
For each term of the summations in the right-hand side,
\begin{equation*}
    \norm{\sum_{C\in \mathcal C_B}\chi_{C^\delta}}_{L^{2}(E_B\medcap R)}^{2}=\sum_{\substack{C,C'\in \mathcal C_B\\C,C'\sim R}}|C^{\delta}\medcap C'^\delta|\lesssim \mathcal (\#\mathcal C_B(R))^2\frac{\delta^2}{\sqrt{(\Delta+\delta)(t+\delta)}}.
\end{equation*}
Here we use $C\sim R$ to denote that $C$ is $\Delta$-tangent to $R$, $\mathcal C_B(R)$ is the set of all circles in $\mathcal C_B$ that are $\Delta$-tangent to $R$. In the last inequality, we uses the item 2 of Lemma \ref{Lem:GeometricLemma}.
\medskip

\noindent\textbf{$L^1$ argument.} Similarly, 
\begin{equation*}
     \norm{\sum_{C\in \mathcal C_B}\chi_{C^\delta}}_{L^{1}(E_B)}\approx \sum_{R\in \mathcal R} \norm{\sum_{C\in \mathcal C_B}\chi_{C^\delta}}_{L^{1}(E_B\medcap R)}
\end{equation*}
and
\begin{equation*}
    \norm{\sum_{C\in \mathcal C_B}\chi_{C^\delta}}_{L^{1}(E_B\medcap R)}\approx \norm{\sum_{C\in \mathcal C_B}\chi_{C^\delta}}_{L^{1}(R)}=\norm{\sum_{\substack{C\in \mathcal C_B\\C\sim R}}\chi_{C^\delta}}_{L^{1}(R)}\sim\#\mathcal C_B(R) \delta\cdot\sqrt{\frac{\Delta}{t}}.
\end{equation*}
\begin{remark}
   All facts above are not as trivial as we present here and can be rigorously proved by the language of ``shadings'', see \cite{PYZ24}.
\end{remark}
\medskip

\noindent\textbf{Interpolation.} By H\"older's inequality, we obtain that
\begin{equation*}
    \norm{\sum_{C\in \mathcal C_B}\chi_{C^\delta}}_{L^{3/2}(E_B)}^{3/2}\lessapprox  \#\mathcal R (\#\mathcal C_B(R)\delta)^{3/2} t^{-1/2}.
\end{equation*}
\medskip

\noindent\textbf{Applying incidence bound.}
Heuristically speaking, Proposition \ref{Prop:IncidenceBound} says that $$\#\mathcal R\lessapprox \Big(  \frac{\#\mathcal C_B}{\#\mathcal C_B(R)}\Big)^{3/2}.$$ Therefore, 
\begin{equation}\label{Eq:Collect4}
     \norm{\sum_{C\in \mathcal C_B}\chi_{C^\delta}}_{L^{3/2}(E_B)}^{3/2}\lessapprox  \#\mathcal R (\#\mathcal C_B(R)\delta)^{3/2} t^{-1/2}\lessapprox (\#\mathcal C_B\delta)^{3/2}t^{-1/2}.
\end{equation}
\begin{remark}
    In the rigorous proof, to apply the genuine bilinear incidence bound in Proposition \ref{Prop:IncidenceBound}, we need to divide $\mathcal C_B$ into two $t$-separated parts by pigeonhole principle. See \cite{PYZ24}. We omit all of the technicalities.
\end{remark}
\medskip

\noindent\textbf{Close the argument.} Thus far, our discussions have been quite general and do not rely on the mixed-norm setting. At this final stage, we need to make the parameter ``$m$'' come into play. Note that
\begin{equation}\label{Eq:Incid1}
    \#\mathcal C_B\leq \#X_B m_B, 
\end{equation}
where $X_B=X\medcap \pi(B)$, $\pi$ is the orthogonal projection to the first two coordinates and 
\begin{equation*}\label{Eq:Incid2}
    m_B=\sup_{x\in X_B}\#R_{x}.
\end{equation*}
Also, we have 
\begin{equation*}\label{Eq:Incid3}
    X_B\lessapprox (t/\delta)^\alpha,\quad m_B\lessapprox (t/\delta).
\end{equation*} 
Therefore,
\begin{equation*}
    (\#\mathcal C_B\delta)^{\frac{3}{2}}t^{-\frac{1}{2}}\overset{\eqref{Eq:Incid1}}{\lessapprox }{\color{red}\#X_B}{\color{blue}m_B} \underline{{\color{orange}(\#\mathcal C_B)^{\frac{1}{2}}}}(\delta/t)^{\frac{1}{2}}{\color{red}\delta}\lessapprox {\color{orange}m_B^{\frac{1}{2}-\frac{\alpha}{2}}}\underline{(t/\delta)^{\frac{\alpha}{2}}}(\delta/t)^{\frac{1}{2}}{\color{blue}m_B}^{{\color{blue}1}+{\color{orange}\frac{\alpha}{2}}}{\color{red}\#X_B\delta}\lessapprox {\color{blue}m_B}^{{\color{blue}1}+{\color{orange}\frac{\alpha}{2}}}{\color{red}\#X_B\delta}.
\end{equation*}
In the second inequality we bound $(\#\mathcal C_B)^{1/2}$ by $(\#X_Bm_B)^{1/2}\lessapprox (t/\delta)^{\alpha/2}m_B^{1/2}$. In the third inequality, we need $\alpha\leq 1$ to bound $m_B^{1/2-\alpha/2}$ by $(t/\delta)^{1/2-\alpha/2}$, which cancels $\underline{(t/\delta)^{\frac{\alpha}{2}}}(\delta/t)^{\frac{1}{2}}$.

Collect \eqref{Eq:Collet1}, \eqref{Eq:Collect2}, \eqref{Eq:Collect3}, \eqref{Eq:Collect4}, and the last inequality, sum over all $B$ and use $m_B\leq m$, $\sum_B\#X_B=\#X$ we obtain for $s=\frac{3}{1-\alpha}$, 
\begin{equation*}
    \norm{\sum_{C\in \mathcal C}
    \chi_{C^\delta}}_{L^{3/2}(\R^2)}^{3/2}\leq C_\eps\delta^{-\eps} (m\delta)^{\frac{3}{2s'}}\delta^{-\frac{\alpha}{2}}\#X.
\end{equation*}
By Fact \ref{Fact:Holder}, this concludes the proof.

\section{Case \texorpdfstring{$\alpha \in (1,2]$}{α∈(1,2]}: slicing method and refined \texorpdfstring{$L^2$}{L2} estimates}\label{sec:case_(1,2]}

\subsection{Prerequisites: slicing theorems}\label{subsec:Slicing}As we mentioned in Section \ref{subsec:SketchOfTheProof}, our main tool of coping this case is slicing theorems, which we introduce now.

Let $\mu\in \mathcal M(\R^n)$ denote $\mu$ is a non-zero finite Borel measure supported in $\R^n$ and $\mathcal H^m$ be the $m$-dimensional Hausdorff measure. For any $V\in G(m,n),1\leq m< n$ and $\mathcal H^m$ almost all $a\in V$, define the \textit{sliced measure} $\mu_{V,a}$ with the properties that 
\begin{equation*}
    \mathrm{spt}(\mu_{V,a})\subseteq \mathrm{spt}(\mu)\medcap V_a^\perp\quad \text{where } V_a^\perp=V^\perp+a,
\end{equation*}
and for $\varphi\in _0(\R^n)$,
\begin{equation*}
    \int_V\int\varphi\mathrm d\mathcal H^ma=\int \varphi \mathrm d\mu\quad \text{if } {P_V}_\sharp\mu \ll \mathcal H^m|_V,
\end{equation*}
where ${P_V}_\sharp\mu$ is the push-forward measure on $V$ under the projection map $P_V$ from $\R^n$ to $V$ and $\mathcal H^m|_V$ is the restricted measure to $V$.

\begin{theorem}[Mattila \cite{Mattila15}, Proposition 6.4] \label{Thm:MattilaSlicingTheorem}
    Let $m < s < n$, $\mu \in \mathcal{M}(\mathbb{R}^n)$ and let $\nu \in \mathcal{M}(G(m,n))$ be such that for some $t > m(n-m) + m - s$,
\[
\nu(B(V,r)) \le r^{t} \quad \text{for } V \in G(m,n),\ r > 0.
\]
Then
\begin{equation}\label{Eq:SlicingTheorem}
\int\int_{V} I_{s-m}(\mu_{V,a}) \, \mathrm d\mathcal{H}^m a \mathrm d\nu V\le C_{n,m,s}  I_s(\mu),
\end{equation}
where $I_s(\mu)=\iint |x-y|^{-s}\mathrm d\mu x\mathrm d \mu y$ is the $s$-energy of $\mu$.
\end{theorem}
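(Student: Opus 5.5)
The plan is to prove \eqref{Eq:SlicingTheorem} on the Fourier side, as in \cite{Mattila15}. The hypothesis $t>m(n-m)+m-s$ is exactly what makes a certain average over $\nu$ converge.

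\textbf{Step 1 (energies as Fourier integrals).} Recall that for $0<\sigma<d$ and $\lambda\in\mathcal M(\R^d)$,
\[
I_\sigma(\lambda)=c_{d,\sigma}\int_{\R^d}|\widehat\lambda(\xi)|^2|\xi|^{\sigma-d}\,\dd\xi .
\]
First reduce to $\mu$ smooth with compact support, i.e. replace $\mu$ by $\mu*\varphi_\eps$. For such $\mu$ the slices are the restrictions $\mu_{V,a}=\mu|_{V_a^\perp}$, viewed as functions on the $(n-m)$-plane $V_a^\perp$. Since $I_s(\mu*\varphi_\eps)\lesssim I_s(\mu)$, the general case should follow by lower semicontinuity. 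Concretely, the slices of $\mu*\varphi_\eps$ converge weakly to $\mu_{V,a}$ for a.e.\ $a$, and Fatou's lemma applies. I regard this limiting step as routine but will check it carefully at the end.

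\textbf{Step 2 (Plancherel in the slicing variable).} Fix $V$ and apply the formula above in $V^\perp\cong\R^{n-m}$ with $\sigma=s-m\in(0,n-m)$. Since $s>m$, this $\sigma$ is positive. Writing frequencies as $\xi+\eta$ with $\xi\in V$, $\eta\in V^\perp$, the $V^\perp$-Fourier transform of $\mu_{V,a}$, viewed as a function of $a\in V$, has $V$-Fourier transform $\widehat\mu(\xi+\eta)$. Then Plancherel in $a\in V$ gives
\[
\int_V I_{s-m}(\mu_{V,a})\,\dd\mathcal H^m a=c\int_V\int_{V^\perp}|\widehat\mu(\xi+\eta)|^2|\eta|^{s-n}\,\dd\eta\,\dd\xi
=c\int_{\R^n}|\widehat\mu(\zeta)|^2\,|P_{V^\perp}\zeta|^{s-n}\,\dd\zeta .
\]

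\textbf{Step 3 (integrating over $\nu$; the main obstacle).} Integrate the identity of Step 2 in $\dd\nu V$ and use Fubini. It then suffices to prove, for every $\zeta\neq0$,
\[
\int |P_{V^\perp}\zeta|^{s-n}\,\dd\nu V\lesssim |\zeta|^{s-n},
\]
because the Fourier formula for $I_s(\mu)$ then closes the argument. By homogeneity we may take $|\zeta|=1$ and decompose dyadically in $r=|P_{V^\perp}\zeta|$. The key geometric estimate is
\[
\nu\{V:\ |P_{V^\perp}\zeta|\le r\}\lesssim r^{\,t-(m-1)(n-m)} .
\]
The reason is that the set $\{V: \zeta\in V\}$ is a submanifold of $G(m,n)$ of dimension $(m-1)(n-m)$. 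The set above lies in its $Cr$-neighbourhood, which can be covered by $\lesssim r^{-(m-1)(n-m)}$ balls of radius $r$, each of $\nu$-measure at most $r^t$. Summing $r^{s-n}\cdot r^{t-(m-1)(n-m)}$ over dyadic $r\le1$ converges precisely when
\[
t-(m-1)(n-m)+s-n>0\iff t>m(n-m)+m-s .
\]
This matches the hypothesis exactly. The range $r\gtrsim1$ is trivial since $\nu$ is finite.

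The covering bound for the neighbourhood is the step I expect to need the most care. One must verify with the standard metric on $G(m,n)$ that $|P_{V^\perp}\zeta|\le r$ forces $d(V,W)\lesssim r$ for some $W\ni\zeta$. I would do this by rotating $V$ within a $2$-plane spanned by $P_V\zeta$ and $\zeta$, which moves $V$ by a distance $\sim r$. Combining Steps 2 and 3 then gives \eqref{Eq:SlicingTheorem} with $C_{n,m,s}$ depending only on the constants $c_{d,\sigma}$ and the covering constant.
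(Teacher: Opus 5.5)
Your argument is correct and is essentially the proof in Mattila's book, which the paper cites without reproducing. Plancherel on the slices gives $\int_V I_{s-m}(\mu_{V,a})\,\mathrm d\mathcal H^m a=c\int|\widehat\mu(\zeta)|^2|P_{V^\perp}\zeta|^{s-n}\,\mathrm d\zeta$, so everything reduces to $\int|P_{V^\perp}\zeta|^{s-n}\,\mathrm d\nu V\lesssim|\zeta|^{s-n}$, and covering the $r$-neighbourhood of $\{V:\zeta\in V\}\cong G(m-1,n-1)$ yields exactly the threshold $t>m(n-m)+m-s$; the one cosmetic point is that your geometric series makes the constant depend on $t$ as well, i.e. $C_{n,m,s,t}$.
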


For our purposes, it is technically important to turn Theorem \ref{Thm:MattilaSlicingTheorem} into a discretized version for Katz-Tao sets. This is because in for $L^2$ Córdoba-type arguments to work on each slice, it is often awkward to work with energy integrals. Also, it seems hard to directly sum up the information we get for all sliced measures $\mu_{V,a}$. On the other hand, counting $\delta$-covering numbers will never incur these annoyances. Now we state the main result in this subsection.
\begin{theorem}\label{Thm:DiscretizedSlicing}
    Let $\alpha\in (1,n]$. Assume $X\subseteq \delta\mathbb Z^n\medcap [-1,1]^n$ is a Katz-Tao $(\delta,\alpha,C)$-set for some $C\lessapprox 1$. Let $\eps_0$ be any fixed small positive number, $\nu$ is the normalized Haar measure on $G(m,n)$. Then for all $V\in G(m,n)$ but a subset with $\nu$-measure at most $\eps_0$, there is a set of $O(\delta^{-m})$ many essentially distinct $\delta$-neighborhood of translated copies of $V^{\perp}$, denoted as $\{{}^jV^\perp_{\delta}\}_{j=1}^{O(\delta^{-m})}$, perpendicular to $V_i$ such that the following holds.
    \begin{enumerate}[label={\rm (\arabic*)}]
    \item  $\medcup_{j}{}^jV^\perp_{\delta}$ covers $X'\subseteq X$ with $\#X'\gtrapprox \#X$.
    \item For all ${}^jV^\perp_{\delta}$, $
    X'\medcap {}^jV^\perp_{\delta}$
is a Katz-Tao $(\delta,\alpha-m,C)$-set.
\end{enumerate}
\end{theorem}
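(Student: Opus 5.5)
Here is the plan: discretize Mattila's slicing theorem (Theorem~\ref{Thm:MattilaSlicingTheorem}) by applying it to a measure built from $X$, and turn energy bounds into Katz--Tao bounds through pigeonholing.

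\textbf{Setup.} Fix $s\in(m,\alpha)$ with $\alpha-s=\eps$ small, and set
\[
\mu=\frac{1}{\#X}\sum_{x\in X}\delta^{-n}\chi_{B(x,\delta)}.
\]
The Katz--Tao condition gives $\mu(B(y,r))\lessapprox (\#X)^{-1}(r/\delta)^\alpha$ for $r\ge\delta$. Since $\#X\lesssim\delta^{-\alpha}$, a dyadic summation gives
\[
I_s(\mu)\lessapprox (\#X)^{-1}\delta^{-s}\sum_{\delta\le r\le 1}(r/\delta)^{\alpha}r^{-s}\lessapprox (\#X)^{-1}\delta^{-\alpha}.
\]
Here the sum is dominated by $r\sim1$ because $\alpha>s$. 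The normalized Haar measure $\nu$ on $G(m,n)$ satisfies $\nu(B(V,r))\lesssim r^{m(n-m)}$, and $m(n-m)>m(n-m)+m-s$ since $s>m$. So Theorem~\ref{Thm:MattilaSlicingTheorem} applies. By Chebyshev, for all $V$ outside a set of $\nu$-measure $\eps_0$,
\[
\int_V I_{s-m}(\mu_{V,a})\,\dd\mathcal H^m a\le C_{\eps_0}I_s(\mu).
\]
For these $V$, because $\mu$ is absolutely continuous, $\mu_{V,a}$ is simply $\mu$ restricted to the plane $V^\perp_a$ with its Lebesgue density, i.e.\ a sum of $(n-m)$-dimensional disk measures.

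\textbf{From energy to Katz--Tao slices.} Partition $V$ into $\delta$-cubes $Q_j$, giving the slabs ${}^jV^\perp_\delta=P_V^{-1}(Q_j)$. There are $O(\delta^{-m})$ relevant ones, since $X\subseteq[-1,1]^n$. For $a$ in the middle portion of $Q_j$, each point $x\in X\cap {}^jV^\perp_{\delta}$ near the plane contributes a disk of density $\sim(\#X)^{-1}\delta^{-m}$ to the slice. Hence $I_{s-m}(\mu_{V,a})$ controls, up to constants, the discrete energy
\[
\big((\#X)^{-1}\delta^{-m}\delta^{n-m}\big)^2\sum_{x,x'\in X_j}\big(|x-x'|+\delta\big)^{-(s-m)},\qquad X_j:=X\cap{}^jV^\perp_{\delta}.
\]
The interior-of-cube issue is handled by averaging over shifted grids, or by enlarging slabs by a constant factor.

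\textbf{Pruning.} Let $X'$ be the union of the $X_j$ that are Katz--Tao $(\delta,\alpha-m,C\delta^{-O(\eps)})$. For a bad slab, some ball $B(y,r)$ contains $N\ge\delta^{-O(\eps)}(r/\delta)^{\alpha-m}$ points of $X_j$, and these pairs contribute at least $N^2 r^{-(s-m)}$ to the discrete energy. The main step is to show that the total mass of bad slabs is at most $\delta^{\eps}\#X$ using the integrated energy bound; the $\delta^{-O(\eps)}$ slack is absorbed into $\lessapprox$. Two features of $X$ help here. First, the ambient Katz--Tao condition already caps $N\lesssim (r/\delta)^\alpha$. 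Second, a slab containing $N$ points of $\#X$ has weight $N/\#X$. I would run a double dyadic pigeonhole in $(r,N)$ and bound $\sum_{\text{bad }j}\#X_j$ by Cauchy--Schwarz against $\sum_j\#X_j^2 r^{-(s-m)}$ divided by the local density.

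\textbf{Main obstacle.} The hard part is exactly this last exchange. The energy bound only controls $\sum_j(\#X_j)^2\cdot(\text{concentration})$ with normalization $(\#X)^{-1}\delta^{-\alpha}$. This is sharp only when $\#X\approx\delta^{-\alpha}$, i.e.\ when $X$ is ``Frostman-tight''. To fix this, I would first use Lemma~\ref{Lem:Uniformization} to pass to a uniform subset. Then I would rescale, working with $\#X$ in place of $\delta^{-\alpha}$ via the Frostman exponent $\log\#X/\log\delta^{-1}$, so that the energy estimate is tight. The constant $C$ in the conclusion has to be read with $\lessapprox$, absorbing the $\delta^{-O(\eps)}$ losses.
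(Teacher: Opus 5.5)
Your setup matches the paper: Mattila's theorem applied to a normalized $\chi_{X_\delta}$, Chebyshev in $V$, and slabs taken from a $\delta$-grid on $V$. The step that fails is the pruning. You keep only the slabs $X_j$ that are already Katz--Tao $(\delta,\alpha-m,K)$, with $K=C\delta^{-O(\eps)}$, and hope the energy shows the rest carry few points. This is false in general, and the energy cannot deliver it. A slab is spoiled by a single cluster of $N\gtrsim K(r/\delta)^{\alpha-m}$ points in an $r$-ball. Such a cluster raises $E_j$ by only about $N^2r^{-(s-m)}$, and for small $r$ this need not exceed what an ordinary slab already carries.

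For a concrete failure, take $n=2$, $m=1$, $\alpha\in(1,2)$. Place $\sim\rho^{-\alpha}$ disjoint $\rho$-balls in a product-type Katz--Tao $(\rho,\alpha,1)$ pattern, with $\rho^{\alpha+1}\approx\delta^2/\log(1/\delta)$. In each ball put a full $\delta$-discretized segment of length $\rho$, with random direction and offset. This $X$ is Katz--Tao $(\delta,\alpha,O(1))$ and essentially uniform. Yet for typical $V$, essentially every $\delta$-slab perpendicular to $V$ contains $\gtrsim\rho/\delta$ points of a segment lying within angle $\delta/\rho$ of $V^\perp$, inside a ball of radius $\rho$. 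That count exceeds $(\rho/\delta)^{\alpha-1}$ by the factor $(\rho/\delta)^{2-\alpha}$, a fixed power of $\delta^{-1}$. So your $X'$ is empty, even though these segments are only a $\sim\delta/\rho$ fraction of $X$.

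Your proposed remedies do not help. Lemma~\ref{Lem:Uniformization} may return essentially $X$ itself. With $\beta=\log\#X/\log\delta^{-1}\in(1,\alpha)$, the clusters still exceed $(\rho/\delta)^{\beta-1}$; and a $(\delta,\beta-m)$ conclusion is weaker than the theorem anyway.

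The missing idea is to prune \emph{points} rather than slabs; the normalization is not actually an obstacle. Use the correct slice masses: the slice density is $(\#X)^{-1}\delta^{-n}$, so each point contributes $(\#X)^{-1}\delta^{-m}$. Then, with $p_j(x)=\sum_{x'\in X_j}(|x-x'|+\delta)^{-(s-m)}$, the integrated bound reads $\sum_j\sum_{x\in X_j}p_j(x)\lessapprox\#X\,\delta^{-(\alpha-m)}$. In other words, the average point already sits at the Katz--Tao threshold, whatever $\#X$ is.

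By Markov, deleting the points with $p_j(x)>K\delta^{-(\alpha-m)}$ costs $\lessapprox K^{-1}\#X$ points, and it only lowers the other potentials. Each survivor $x$ then has $\#(X_j\cap B(x,r))\lesssim K\delta^{-(\alpha-s)}(r/\delta)^{\alpha-m}$. Any $r$-ball meeting a survivor $x$ lies in $B(x,2r)$, so every pruned slice is Katz--Tao $(\delta,\alpha-m,\lessapprox 1)$.

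This is the discrete form of the paper's argument. The paper applies the popularity principle (Lemma~\ref{Lem:BasicGMTLemma}(2)) on each good slice, keeping half of its $\delta$-balls at Frostman constant $\lesssim I_{s-m}(\mu_{V,a})/\norm{\mu_{V,a}}$. It makes this ratio $\lessapprox C$ in two steps. First it pigeonholes the slabs to a common count $\tau$, so that $\norm{\mu_{V,a}}\mathcal L^m(V''_\delta)\approx\norm{\mu}$. Then it uses $I_s(\mu)\lesssim C\norm{\mu}$ from Lemma~\ref{Lem:BasicGMTLemma}(1). With your pointwise Markov step, the $\tau$-pigeonholing is not even needed.
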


Theorem~\ref{Thm:DiscretizedSlicing} should be intuitively acceptable, but it does not seem to exist in the literature. One reason might be that people did not find reasonable applications (outside of pure geometric measure theory) for such an extension, and in this paper we provide one regarding the weighted mixed-norm estimates. Similar discretization procedures were previously done by Peres-Shmerkin \cite[Proposition~7]{PS09} and Shmerkin \cite[Proposition~5]{shmerkin08} for orthogonal projections on $\R^2$, which are the earliest version we can find and and are used to study certain resonance between generalized Cantor sets. We should also point out that for technical reasons one can only expect the conclusions of Theorem~\ref{Thm:DiscretizedSlicing} to hold for a suitable refinement of $X$, and have to abandon a positive proportional of directions in $G(m,n)$ (one can easily construct counterexamples). However, as we will see in the next subsection, these will not affect our purpose.

To prove Theorem~\ref{Thm:DiscretizedSlicing}, we need the following classical results from geometric measure theory.
\begin{lemma}\label{Lem:BasicGMTLemma}
    Let $\mu$ be a non-trivial Borel measure supported in $B(0,1)$ and $\norm{\mu}$ be the total variance of $\mu$. Then 
    \begin{enumerate}[label={\rm (\arabic*)}]
        \item\label{Item:Item1OfBasicGMTLemma} For all $0<t<s$, $ \mu(B_r)\leq C_\mu r^s \text{ for any } B_r \Longrightarrow  I_t(\mu)\leq C_{s,t}\norm{\mu}$.
        \item\label{Item:Item2OfBasicGMTLemma} $I_s(\mu)<\infty \Longleftrightarrow $ There is a non-empty set   $A\subseteq  \spt(\mu)$ such that 
        \begin{equation*}
         (\mu|_A)(B_r)\leq C_s\frac{I_s(\mu)}{\norm{\mu}}r^s   
        \end{equation*} for any $B_r$.
    \end{enumerate}
\end{lemma}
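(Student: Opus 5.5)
The statement to prove is Lemma~\ref{Lem:BasicGMTLemma}. These are classical facts, and I would prove them directly from the layer-cake formula for the energy.

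For Item~\ref{Item:Item1OfBasicGMTLemma}, I fix $x\in\spt\mu\subseteq B(0,1)$ and write the potential as
\[
\int|x-y|^{-t}\,\dd\mu(y)=\int_0^\infty \mu\big(\{y:|x-y|^{-t}>\lambda\}\big)\,\dd\lambda=\int_0^\infty \mu\big(B(x,\lambda^{-1/t})\big)\,\dd\lambda .
\]
I split the $\lambda$-integral at $\lambda=2^{-t}$, which corresponds to radius $2$.
\begin{itemize}
\item For $\lambda\le 2^{-t}$, I bound the integrand by $\norm{\mu}$.
\item For larger $\lambda$, I use the ball condition $\mu(B(x,\rho))\le C_\mu\rho^s$. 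The tail integral is then $C_\mu\int_{2^{-t}}^\infty\lambda^{-s/t}\,\dd\lambda$, which converges because $s/t>1$.
\end{itemize}
This gives $\sup_x\int|x-y|^{-t}\,\dd\mu(y)\lesssim_{s,t}\norm{\mu}+C_\mu$. Integrating against $\dd\mu(x)$ then yields $I_t(\mu)\le C_{s,t}\norm{\mu}$. This holds with the constant absorbing $C_\mu$, or after normalizing so that $C_\mu\lesssim\norm{\mu}$, which is how the lemma is applied to Frostman-type measures. I would state this normalization explicitly.

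For the forward direction of Item~\ref{Item:Item2OfBasicGMTLemma}, I set $\phi(x)=\int|x-y|^{-s}\,\dd\mu(y)$, so that $\int\phi\,\dd\mu=I_s(\mu)$. By Chebyshev, the set $A=\{x\in\spt\mu:\phi(x)\le 2I_s(\mu)/\norm{\mu}\}$ has $\mu(A)\ge\norm{\mu}/2>0$, so $A$ is non-empty.

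Take any ball $B(z,r)$.
\begin{itemize}
\item If $B(z,r)\cap A=\emptyset$, there is nothing to prove.
\item Otherwise, pick $x\in A\cap B(z,r)$. Then $B(z,r)\subseteq B(x,2r)$, and on that ball $|x-y|^{-s}\ge(2r)^{-s}$. Hence
\[
\mu|_A(B(z,r))\le\mu(B(x,2r))\le(2r)^s\phi(x)\le 2^{s+1}\frac{I_s(\mu)}{\norm{\mu}}\,r^s .
\]
\end{itemize}

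For the reverse direction, I read "$\Longleftarrow$" as applying to $\mu|_A$ with a finite constant. The ball condition for $\mu|_A$ at exponent $s$ gives finiteness of $I_{s'}(\mu|_A)$ for all $s'<s$ by Item~\ref{Item:Item1OfBasicGMTLemma}. If a genuine equivalence at the same exponent is intended, I would note that only this exponent-loss version is true in general, and it is all that is needed downstream.

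The only delicate point is this formulation issue in the reverse implication of Item~\ref{Item:Item2OfBasicGMTLemma}. Everything else is the routine layer-cake and Chebyshev computation.
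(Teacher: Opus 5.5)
Your proposal is correct and is essentially the argument the paper defers to by citation: the layer-cake (spherical coordinates) bound for Item~(1), as in Mattila, and for Item~(2) the popular-principle set $A=\{x\in\spt\mu:\int|x-y|^{-s}\,\dd\mu(y)\le 2I_s(\mu)/\norm{\mu}\}$ from Wolff's Lemma~8.3, which is exactly how Remark~\ref{Remark:LoseAtMostHalf} describes $A_a$. Your caveat about the reverse implication is also justified. One adjustment is needed: instead of normalizing $C_\mu\lesssim\norm{\mu}$, use $\norm{\mu}=\mu(B(0,1))\le C_\mu$ in your potential bound to get $I_t(\mu)\lesssim_{s,t}C_\mu\norm{\mu}$. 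The normalization fails in the application, where $\norm{\mu}\sim\delta^\alpha\#X$ can be far below the Katz--Tao constant, whereas $I_t(\mu)\lesssim_{s,t}C_\mu\norm{\mu}$ is precisely the form invoked in the proof of Theorem~\ref{Thm:DiscretizedSlicing}.
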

The first item is a direct application of spherical coordinates, see \cite[Page 19]{Mattila15}. Whereas the second is an application of popular principle, see \cite[Lemma 8.3]{Wolff_2003_BookWolffLectureNotes}.

Now we are ready to prove the theorem.
\begin{proof}[Proof of Theorem~\ref{Thm:DiscretizedSlicing}]
     \noindent\textbf{Apply Theorem \ref{Thm:MattilaSlicingTheorem}.} Define 
    \begin{equation*}
        \mu:=\delta^{n-\alpha}\chi_{X_\delta
        }.
    \end{equation*}
    It is straightforward to verify that $\mu$ is an $\alpha$-dimensional Frostman measure with ball condition constant $C$. Indeed, for any ball $B_r$ with radius $r\in (0,1)$, if $r\in (0,\delta]$, 
    \begin{equation*}
        \delta^{-n+\alpha}r^n\leq \delta^{-n+\alpha}\mathcal L^{n}(B_r\medcap X_\delta)=\mu(B_r)\leq C r^{\alpha};
    \end{equation*}
    if $r\in (\delta,1]$, 
    \begin{equation*}
        \mu(B_r)\leq \delta^{-n+\alpha}\#B_r\medcap X\cdot\delta^{n}\leq Cr^\alpha.
    \end{equation*}

     Apply Theorem \ref{Thm:MattilaSlicingTheorem} with $m<s<\alpha\leq n$ and $t=m(n-m)$. We have \eqref{Eq:SlicingTheorem} holds.
    \begin{equation*}
\int\int_{V} I_{s-m}(\mu_{V,a}) \, \mathrm d\mathcal{H}^m a \mathrm d\nu V\le C_{n,m,s}  I_s(\mu).
\end{equation*}

    \noindent\textbf{Remove outer integral.} We claim that for all $V\in G(m,n)$ but a subset with $\nu$-measure at most $\eps_0$, the inner integral
    \begin{equation}\label{Eq:InnerIntegral2}
        \int_V I_{s-m}(\mu_{V,a})d\mathcal H^m a\leq \eps_0^{-1}C_{n,m,s} I_s(\mu).
    \end{equation}
    In fact, it suffices to prove that 
    \begin{equation*}
       E:= \Big\{V\in G(m,n):\int_{V} I_{s-m}(\mu_{V,a}) \, \mathrm d\mathcal{H}^m a>\eps_0^{-1}C_{n,m,s} I_s(\mu)\Big\}.
    \end{equation*}
    has measure at most $\eps_0$. This is because 
    \begin{equation*}
        \nu(E)\eps_0^{-1}C_{n,m,s} I_s(\mu)\leq \int_E\int_{V} I_{s-m}(\mu_{V,a}) \, \mathrm d\mathcal{H}^m a \mathrm d\nu V\le C_{n,m,s}  I_s(\mu).
    \end{equation*}
    \noindent\textbf{Refine the set $X$.} From now on, we focus on each fixed such $V\in G(m,n)\setminus E$. Recall that $P_V$ is the orthogonal projection from $\R^n$ to $V$. For technical reason, instead of taking a $\delta$-net, we take a  $\delta/4$-net $V'$ of  $P_V(X_\delta)\subseteq V$. Then 
    \begin{itemize}
        \item $P_V(X_\delta)\subseteq \medcup_{v\in V'}B^{m}(v,\delta/4)$ and each point $a\in P_V(X_\delta)$ is covered by at most $C_n$ times. Therefore, \eqref{Eq:InnerIntegral2}implies that 
    \begin{equation}\label{Eq:EachTube}
        \sum_{v\in V'}\int_{B^m(v,\delta/4)}I_{s-m}(\mu_{V,a})d\mathcal H^m a\leq C_n\eps_0^{-1}C_{n,m,s} I_s(\mu).
    \end{equation}
    \item   $X\subseteq X_\delta\subseteq \medcup_{v\in V'}P_V^{-1}(B^m(v,\delta/4))$. Dyadic pigeonholing $\{\#X\medcap P_V^{-1}(B^m(v,\delta/4))\}_v$, we have a dyadic number $\tau$ and a refined subset of $V''\subseteq V'$, such that 
    \begin{enumerate}
        \item \label{Item:Item1OfCOuntingNumber}for each $v\in V''$, $\#X\medcap P_V^{-1}(B^m(v,\delta/4))\sim \tau$;
        \item \label{Item:Item2OfCOuntingNumber}$\#X\medcap \big(\medcup_{v\in V''}P_V^{-1}(B^m(v,\delta/4))\big)=\#V''\tau\geq (\log\delta^{-1})^{-1}\#X$.
    \end{enumerate} 
    \end{itemize}
\noindent\textbf{Analyze the slice measure $\mu_{V,a}$.} Denote as $V''_\delta$ the $\medcup_{v\in V''}B^m(v,\delta/4)$. We claim that for $a\in V''_\delta$, 
\begin{equation*}
    \norm{\mu_{V,a}}\mathcal L^{m}(V''_\delta)\approx \delta^\alpha\#X.
\end{equation*}
Indeed, note that
\begin{equation*}
    \mu_{V,a}=\delta^{-n+\alpha}\chi_{X_\delta\medcap (V^\perp+a)},\quad \mathrm{spt}(\mu_{V,a})=X_\delta\medcap (V^\perp+a).
\end{equation*}
For each $a\in V''_\delta$, there is $v_a\in V''$ such that $a\in B^m(v_a,\delta/4)$.  By Item \eqref{Item:Item1OfCOuntingNumber} $\#X\medcap P_V^{-1}(B^m(v_a,\delta/4))\sim \tau$ and $X$ are $\delta$-separated set of points, therefore, $X_\delta \medcap P_V^{-1}(a)$ is a union of $\delta$-balls in $\R^{n-m}$
\begin{equation*}
    \mathcal L^{n-m}(X_\delta \medcap a+V^\perp)=\mathcal L^{n-m}(X_\delta \medcap P_V^{-1}(a))\sim \tau \delta^{n-m}.
\end{equation*}
Therefore, $\norm{\mu_{V,a}}\sim \delta^{-n+\alpha}\delta^{n-m}\tau\sim \tau\delta^{\alpha-m}$. By Item \eqref{Item:Item2OfCOuntingNumber}, we obtain
\begin{equation*}
    \norm{\mu_{V,a}}\mathcal L^m(V''_\delta)\sim \tau \delta^{\alpha-m}\delta^m\#V''\approx \delta^\alpha\#X.
\end{equation*}
\noindent\textbf{Analyze the inner integral.} Form \eqref{Eq:EachTube}, we have for each $V\in G(m,n)\setminus E, \nu(E)\leq \eps_0$,
\begin{equation*}
    \int_{V''_\delta}I_{s-m}(\mu_{V,a})d\mathcal H^m(a)\leq C_{n,m,s} I_s(\mu).
\end{equation*}
By a similar argument analyzing $\nu(E)$, we have 
\begin{equation*}
    \mathcal H^m\Big(\big\{ a\in V_\delta'':I_{s-m}(\mu_{V,a})\leq \frac{2C_{n,m,s} I_s(\mu)}{\mathcal L^{m}(V''_\delta)}\big \}\Big)>\frac{1}{2}\mathcal L^{m}(V''_\delta).
\end{equation*}
For every $a\in\big\{ a\in V_\delta'':I_{s-m}(\mu_{V,a})\leq \frac{2C_{n,m,s} I_s(\mu)}{\mathcal L^{m}(V''_\delta)}\big \}$ , 
\begin{equation*}
    I_{s-m}(\mu_{V,a})\leq \frac{2C_{n,m,s} I_s(\mu)}{\mathcal L^{m}(V''_\delta)}<\infty.
\end{equation*}
By Item \ref{Item:Item2OfBasicGMTLemma} of Lemma \ref{Lem:BasicGMTLemma}, there is $A_a\subseteq \mathrm{spt}(\mu_{V,a})=X_\delta\cap V^\perp+a$, in this case, it is a union sub-collections of $\delta$-balls in $\R^{n-m}$ that lose at most half of the $\delta$-balls in $\mathrm{spt}(\mu_{V,a})$, see Remark \ref{Remark:LoseAtMostHalf} below. Furthermore, we have the ball condition $\mu_{V,a}|_{A_a}(B_r)\leq C_a r^{s-m}$, where 
\begin{align*}
    C_a\leq C_{n,m,s} \frac{I_{s-m}(\mu_{V,a})}{\norm{\mu_{V,a}}} &\leq C_{n,m,s} \frac{I_{s}(\mu)}{\norm{\mu_{V,a}}\mathcal L^{m}(V''_\delta)}\\
   \text{(By Item \ref{Item:Item1OfBasicGMTLemma} of Lemma~\ref{Lem:BasicGMTLemma})} &\leq C_{n,m,s} \frac{C\norm{\mu}}{\norm{\mu_{V,a}}\mathcal L^{m}(V''_\delta)}\\
   &\lessapprox C_{n,m,s} C\frac{\delta^\alpha \#X}{\delta^\alpha \#X}\\
   &\leq CC_{n,m,s} .
\end{align*}

   This implies that 
  \begin{equation}\label{Eq:Non-ConcentrationOnEachSlice}
        \delta^{-n+\alpha}\chi_{X'_\delta\medcap A_a}(B^{n-m}_r)\leq \delta^{-n+\alpha}\chi_{X_\delta\medcap A_a}(B^{n-m}_r)\leq CC_{n,m,s} r^{s-m},
  \end{equation}
  which gives 
  \begin{equation}\label{Eq:Eq:Non-ConcentrationOnEachSliceVersion2}
      \delta^{-n+\alpha}|\big( X_\delta'\medcap A_a\big)\medcap B_r^{n-m}|_\delta\delta^{n-m}\leq CC_{n,m,s} r^{s-m}.
  \end{equation}
  \noindent\textbf{Verify the desired properties.} Since each $\delta$-ball in $\R^{n-m}$ is the intersection of some $B^n(x,\delta)\subseteq X_\delta'$ with $ a+V^\perp$ for some $x\in P_V^{-1}(B^m(v_a,\delta/4))\medcap X'$. Note that passing from $X'_\delta$ to $A_a \cap X'_\delta$ removes at most $\tau/2$ many $\delta$-balls. Since this is only a minor refinement, we continue to denote the resulting collection by $X'_\delta$. Moreover, the non-concentration conditions \eqref{Eq:Non-ConcentrationOnEachSlice} and \eqref{Eq:Eq:Non-ConcentrationOnEachSliceVersion2} continue to hold for this refined collection of $\delta$-balls. Define ${}^jV^\perp_\delta$ as $P_V^{-1}(B^m(v,\delta/4)),v\in V''$, obviously $\#\{j\}=O(\delta^{-m})$, For each $B_r^n$, 
  \begin{equation*}
      \#X'\medcap {}^jV^\perp_\delta\medcap B_r^n\sim |\big( X_\delta'\medcap A_a\big)\medcap B_r^{n-m}|_\delta\leq CC_{n,m,s} \frac{r^{s-m}}{\delta^{\alpha-m}}.
  \end{equation*}
  Let $s\to \alpha$, the error term can be absorbed by $C$ and we conclude the proof.
\end{proof}
\begin{remark}\label{Remark:LoseAtMostHalf}
    In the proof, we claimed that $A_a$ is a union of $\delta$-balls in $\R^{n-m}$ that loses at most half of the $\delta$-balls in $\mathrm{spt}(\mu_{V,a})$. This follows for the following reasons.
\begin{enumerate}
    \item In the proof of the item 2 of Lemma \ref{Lem:BasicGMTLemma}, $A_a$ is defined as the set of all $y \in \mathrm{spt}(\mu_{V,a})$ such that the potential (a function of $y$)
    \[
    \int |x-y|^{-(s-m)} \, d\mu_{V,a}(x)
    \]
    of $\mu_{V,a}$ at $y$ is bounded by
    \[
    \frac{2I_{s-m}(\mu_{V,a})}{\|\mu_{V,a}\|}.
    \]
    By the construction of popular principle, the set of such points carries at least $\frac{1}{2}\|\mu_{V,a}\|$ of the total mass.

    \item Moreover, thanks to the specific form of $\mu$, this potential is essentially constant on each $\delta$-ball. Hence if a $\delta$-ball contains one point of $A_a$, then the whole ball may be regarded as lying in $A_a$.
\end{enumerate}
Recall that $\mu_{V,a}$ is essentially the characteristic function of a union of $\delta$-balls. Therefore, the two observations above imply that $A_a$ contains at least half of the $\delta$-balls in $\mathrm{spt}(\mu_{V,a})$. We omit the routine technical details.
\end{remark}
A direct corollary of this theorem is the following special case in $\R^2$.
\begin{corollary}\label{Coro:SlicingForCentersOnThePlane}
    Let $\alpha\in (1,2]$. Assume $X\subseteq \delta\mathbb Z^2\medcap [0,1]^2$ is a Katz-Tao $(\delta,\alpha,C)$-set for some $C\lessapprox 1$ and  $D_i=[\frac{2\pi i}{100},\frac{2\pi (i+1)}{100}]\medcap [0,2\pi],i=0,1,...,99$. Then for all $i\in \{0,...,99\}$, there is $d_i\in D_i$ {\rm(}view it as a direction{\rm)} and a set of essentially distinct $\delta$-tubes $\mathbb T_i=\{T_{i,j}\}_{j=1}^{O(\delta^{-1})}$ perpendicular to $d_i$, such that the following holds. 
\begin{enumerate}[label={\rm (\arabic*)}]
    \item $\medcup_{j}T_{i,j}$ covers (most of) $X$.
    \item For all $T_{i,j}$, $
    X\medcap T_{i,j}$
is a Katz-Tao $(\delta,\alpha-1,C)$-set.
\end{enumerate}
\end{corollary}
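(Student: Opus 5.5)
The plan is to deduce Corollary~\ref{Coro:SlicingForCentersOnThePlane} from Theorem~\ref{Thm:DiscretizedSlicing} with $n=2$ and $m=1$. In this case $G(1,2)$ is identified with the circle of directions $[0,2\pi)$ modulo $\pi$. The normalized Haar measure $\nu$ is then just normalized arc length, so each arc $D_i$ has $\nu$-measure comparable to $1/100$; the exact constant depends on whether one counts directions modulo $\pi$, but in either case it is at least $1/100$. First I would translate $X\subseteq[0,1]^2$ into $[-1,1]^2$, or simply note that Theorem~\ref{Thm:DiscretizedSlicing} is stated for $[-1,1]^n$, which contains $[0,1]^2$. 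Then I apply the theorem with a fixed small exceptional parameter, say $\eps_0=1/1000$.

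The theorem gives an exceptional set $E\subseteq G(1,2)$ with $\nu(E)\le \eps_0$. Since $\nu(D_i)>\eps_0$ for every $i$, each $D_i\setminus E$ is nonempty, and I choose $d_i$ there. I take $V$ to be the line through $d_i$, so that $V^\perp$ is the line perpendicular to $d_i$. The $\delta$-neighborhoods ${}^jV^\perp_\delta$ of the translates of $V^\perp$ are exactly $\delta$-tubes perpendicular to $d_i$. There are $O(\delta^{-1})$ of them, and they are essentially distinct. This gives the family $\mathbb T_i=\{T_{i,j}\}$.

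Conclusions (1) and (2) then follow from items (1) and (2) of Theorem~\ref{Thm:DiscretizedSlicing}. The phrase ``covers (most of) $X$'' is read as covering the refinement $X'$ with $\#X'\gtrapprox\#X$. Likewise ``$X\cap T_{i,j}$'' should be read as $X'\cap T_{i,j}$, and by the theorem this is a Katz-Tao $(\delta,\alpha-1,C)$-set.

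The main subtlety is that the refinement $X'$ depends on $i$. If a single common refinement is wanted for all 100 directions, I would iterate: apply the theorem to $X$ for $i=0$, then to the resulting $X'$ for $i=1$, and so on. Each step loses only a $\delta^{-o(1)}$ factor, so after 100 steps the loss is still $\lessapprox 1$. The Katz-Tao property passes to subsets, and the constant $C\lessapprox1$ is preserved at each step. Slicing properties obtained for an earlier direction also survive, because subsets of Katz-Tao sets remain Katz-Tao. With this iteration, all 100 directions share one refined set $X'$ with $\#X'\gtrapprox\#X$.
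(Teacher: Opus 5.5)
Your proposal is correct and follows the paper's route: the paper states this corollary as a direct specialization of Theorem~\ref{Thm:DiscretizedSlicing} to $n=2$, $m=1$, with $\eps_0$ small enough that every arc $D_i$ (normalized measure at least $1/100$ in $G(1,2)$) contains a non-exceptional direction $d_i$, and with ``$X$'' in the conclusions read as the refinement $X'$. Your iteration producing one refinement common to all $100$ directions is a worthwhile addition, since the paper tacitly relies on such a common refinement in steps (b)--(d) of the proof of item (2) of Proposition~\ref{Prop:MainThmDiscretizedEndPointCase} without justifying it.
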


\subsection{Proof of the item (2) of Proposition \ref{Prop:MainThmDiscretizedEndPointCase}}
In light of Corollary \ref{Coro:SlicingForCentersOnThePlane}, we can decompose each circle into 100 many small arcs with equal-length and prove a uniform estimate for each arc. This trick was used in \cite{HJ25,Zahl25}. Therefore, we may assume that all circles are the upper arc with length $\sim 2\pi/100$. To prove \eqref{Eq:KakeyaTypeEstiamteForAlphaIn1To2}, it suffices to prove 

\begin{equation*}
    \Big\|\sum_{x_i\in X}\sum_{r_j\in R_{x_i}}\chi_{C^*_{\delta}(x_i,r_j)}\Big\|_{L^{2}(\R^n)}\lessapprox (m\delta)^{\frac{1}{s'}}\delta^{-\frac{\alpha}{2}}(\#X)^{\frac{1}{2}}
\end{equation*}
where $C^*(x,r)$ is the an arc with length $\sim 2\pi/100$ of circle $C(x,r)$ and similar as above, we use $C_\delta^*(x,r)$ to denote its $\delta$-neighborhood. The proof is based on a Cord\'oba type argument. 

Note that the set of centers $X$ is a Katz-Tao $(\delta,\alpha,C)$-set. It is tempting to directly apply Corollary~\ref{Coro:SlicingForCentersOnThePlane} to find a set of parallel $\delta$-tubes $\{T_k\}_{k}$ (slices) that cover most of $X$. In each $T_k$ (slice), the centers form a Katz-Tao $(\delta,\alpha-1,C)$-set. However, the problem is that the resulting refinement of $X$ does not necessarily preserve the $L^2$-mass above, since we have no a priori knowledge about the refinement. To fix this technical issue, we need to apply Corollary~\ref{Coro:SlicingForCentersOnThePlane} before the duality step in Section~\ref{sec:discretization}. A more precise sketch is as follows:
\begin{enumerate}[label={\rm (\alph*)}]
    \item Given a function $f\in L^p(\R^2)$, we start from (\ref{Eq:IntermediateEstimate}) with $\langle X \rangle_\alpha \lessapprox 1$ (by the reduction in Section~\ref{sec:discretization}), and do the same pigeonholing argument as in (\ref{Eq:DPForUsingUniform}), which yields a refinement $X'$ of $X$;

    \item Next, we apply Corollary~\ref{Coro:SlicingForCentersOnThePlane} to this $X'$ and get a further refinement $X''$. Note that by the initial uniformization refinement, the mixed-norm of $\AA_\delta f$ over $X''$ will still dominate that over the original $X$. Our $X''$ has all the slicing structures as in the output of Corollary~\ref{Coro:SlicingForCentersOnThePlane}.

    \item As long as we could prove uniform (discretized) weighted mix-norm estimates over $X''$, then by applying it to our given function $f$, we get the desired bound for it. Since the bound is uniform, i.e., the constants are independent of $f$ (though the refinements $X'$ and $X''$ may depend on $f$), we are done with the proof of Case (2) of Theorem~\ref{Thm:MainTheorem}.

    \item The only thing that remains to show is how to get uniform weighted mix-norm estimates over $X''$. Now we perform the duality argument in Section~\ref{sec:discretization} with respect to $X''$, which yields some $X''' \subseteq X''$. Since Katz-Tao properties are inherited downwards, we know that not only is $X'''$ a Katz-Tao $(\delta,\alpha,C)$-set ($C \lessapprox1$), but it also enjoys the slicing structures of $X''$ with the same choice of $\mathbb{T}_i$.

    \item By taking $X'''$ as the new $X$, we can safely continue the proof. Therefore, in later arguments, we can pretend that Corollary~\ref{Coro:SlicingForCentersOnThePlane} is directly applied to the $X$ under examination.
\end{enumerate}

After an appropriate rotation, we can assume all $T_k$'s are parallel to $x$-axis. In this case, $C^*$'s are upper arcs. By dyadic pigeonholing, we assume each slice contains an equal number of centers from $X$. In the computations below, we may frequently use the following facts. 
\begin{itemize}
    \item[(i)] Assume the total number of the center is $N$, then $N\approx \#X$. And $m\leq \delta^{-1}$
    \item[(ii)] We also assume that each slice has $K$ points then $K\lessapprox \delta^{-(\alpha-1)}$.
    \item[(iii)] $k=1,...,N/K\leq \min\{\delta^{-1},N\}$, which implies $K\geq \delta N$ if $N \geq \delta^{-1}$; $K\geq 1$ if $N\leq \delta^{-1}$.
\end{itemize}
Now we are ready to estimate the $L^2$ norm. For circles on each slices, we use item 2 and item 3 in Lemma \ref{Lem:GeometricLemma}; whereas for the interaction of different slices, we simply use triangle inequality. By the triangle inequality, we have
\begin{equation*}
    \Big\|\sum_{x_i\in X}\sum_{r_j\in R_{x_i}}\chi_{C^*_{\delta}(x_i,r_j)}\Big\|_{L^{2}(\R^n)}\leq\frac{N}{K} \max_{k}\Big\|\sum_{x_i\in T_k}\sum_{r_j\in R_{x_i}}\chi_{C^*_{\delta}(x_i,r_j)}\Big\|_{L^{2}(\R^n)}.
\end{equation*}
Expand the $L^2$ norm
\begin{equation}\label{Eq:ExpandL2Norm}
    \frac{N}{K}\Big(\sum_{i_1,i_2}\sum_{j_1\in R_{x_{i_1}}}\sum_{j_2\in R_{x_{i_2}}}|C_\delta^*(x_{i_1},r_{j_1})\medcap C^*_\delta(x_{i_2},r_{j_2})|\Big)^{1/2}.
\end{equation}
Since $C^*$ is the upper arc, if two arcs (or their $\delta$-neighborhoods) whose centers have the same $y$-axis, intersect, then by item (3) of Lemma \ref{Lem:GeometricLemma}, $\sqrt{(\Delta+\delta)/(d+\delta)}\sim 1$. This upgrades item (2) of Lemma \ref{Lem:GeometricLemma} into 
\begin{equation*}
    |C^*_\delta(x_1,y;r)\medcap C_\delta^*(x_2,y,r)|\lesssim \frac{\delta^2}{|x_1-x_2|+\delta}.
\end{equation*}
 Plugging this improved estimate to \eqref{Eq:ExpandL2Norm} and introduce dyadic summation, we get
 \begin{equation*}
     \eqref{Eq:ExpandL2Norm}\lesssim \frac{N}{K}\Bigg(Km\sum_{\theta}\min\Big\{\Big(\frac{\theta}{\delta}\Big)^{\alpha-1},K\Big\}\min\Big\{\frac{\theta}{\delta},m\Big\}\frac{\delta^{2}}{\theta}\Bigg)^{1/2},
 \end{equation*}
 where $\theta$ is dyadic numbers between $\delta$ to 1. Here we first fix the center $i_1$ and radius $j_1$. This gives the term $Km$ where $K$ is the number of centers on each slice and $m$ the number of radii for the same center. Then we consider the geometric relation of other circles with this fixed circle. We dyadically decompose the distances of other centers with the fixed circle: $|x_1-x_2|\sim \theta$. This gives the summation term. Finally, for each center, the number of radii is bounded by $\theta/\delta$ and $m$. This gives the last term.

 Next, we need to discuss different cases of the relation between $K,\theta/\delta,m$ for the ``$\min$''. We use $(*)$ to represent the part in the largest parenthesis above. 
 \begin{itemize}
     \item Suppose that $K^{\frac{1}{\alpha-1}} \le m$, i.e. $K \le m^{\alpha-1}$.

     If $\left(\frac{\theta}{\delta}\right)^{\alpha-1} \le K$ (thus $\frac{\theta}{\delta} \le m$), then $(*)\lesssim Km \sum_{\theta \lesssim \delta K^{\frac{1}{\alpha-1}}} \left(\frac{\theta}{\delta}\right)^{\alpha-1}\delta \lesssim Km \cdot K\delta$.

     If $\left(\frac{\theta}{\delta}\right)^{\alpha-1} \ge K$ and $\frac{\theta}{\delta} \le m$, then $(*)\lesssim Km \sum_{\delta K^{\frac{1}{d-1}} \le \theta \le \delta m} K \cdot \frac{\theta}{\delta} \cdot \frac{\delta^2}{\theta} \lesssim Km \cdot K\delta$.

     It $\frac{\theta}{\delta} \ge m$ (thus $\left(\frac{\theta}{\delta}\right)^{\alpha-1} \ge K$), then $(*)\lesssim Km \sum_{\theta \text{ dyadic},\ \theta \ge m\delta} K \cdot m \cdot \frac{\delta^2}{\theta} \lesssim Km \cdot K\delta$.

 Hence in this case, we have 
 \begin{equation*}
     \norm{\sum_{i=1}^N\sum_{r_j\in R_{x_i}}\chi_{C^*_\delta(x_i,r_j)}}_{L^2(\R^2)}\lessapprox \frac{N}{K}(KmK\delta) \lessapprox Nm^{\frac{1}{2}}\delta^{\frac{1}{2}}.
 \end{equation*}
 \item Suppose $K^{\frac{1}{\alpha-1}} \ge m$, i.e. $K \ge m^{\alpha-1}$.

 If $\frac{\theta}{\delta} \le m$ (thus $\left(\frac{\theta}{\delta}\right)^{\alpha-1} \le K$), then $(*)\lesssim Km \sum_{\theta \le \delta m} \left(\frac{\theta}{\delta}\right)^{\alpha-1} \cdot \delta \le K \cdot m^\alpha \cdot \delta$.

 If $\frac{\theta}{\delta} \ge m$ and $\left(\frac{\theta}{\delta}\right)^{\alpha-1} \le K$, then $(*)\lesssim Km \sum_{\delta m \le \theta \le \delta K^{\frac{1}{\alpha-1}}} \left(\frac{\theta}{\delta}\right)^{\alpha-1} \cdot m \cdot \frac{\delta^2}{\theta} \le K \cdot m^\alpha \cdot \delta$.

 If $\left(\frac{\theta}{\delta}\right)^{\alpha-1} \ge K$ (thus $\frac{\theta}{\delta} \ge m$), then 
 \begin{equation*}
     (*)\lesssim Km \sum_{\theta \ge \delta K^{\frac{1}{\alpha-1}}} Km \cdot \frac{\delta^2}{\theta} \le K^2 m^2 \cdot K^{-\frac{1}{\alpha-1}} \cdot \delta \le K m^\alpha \delta.
 \end{equation*}

 Hence in this case, we have 
 \begin{equation*}
     \norm{\sum_{i=1}^N\sum_{r_j\in R_{x_i}}\chi_{C^*_\delta(x_i,r_j)}}_{L^2(\R^2)}\lessapprox \frac{N}{K}(Km^\alpha\delta)^{1/2}\lessapprox \frac{N}{K^{1/2}}m^{\alpha/2}\delta^{1/2}
 \end{equation*}
 \end{itemize}
We summarize the above computations as follows, 
\begin{equation}\label{Eq:AfterCordoba}
\norm{\sum_{i=1}^N\sum_{r_j\in R_{x_i}}\chi_{C^*_\delta(x_i,r_j)}}_{L^2(\R^2)}
\lesssim
\begin{cases}
N \cdot m^{\frac{1}{2}} \cdot \delta^{\frac{1}{2}}, & \text{if } K \le m^{\alpha-1},\\
\frac{N}{K^{\frac{1}{2}}} \cdot m^{\frac{\alpha}{2}} \cdot \delta^{\frac{1}{2}}, & \text{if } K \ge m^{\alpha-1}.
\end{cases}
\end{equation}
Therefore, our goal comes down to finding the Lebesgue exponents $q,s$ such that 
\begin{equation}\label{Eq:GoalAfterL2Argument}
    \text{RHS of }\eqref{Eq:AfterCordoba}\lessapprox (m\delta)^{\frac{1}{s'}}\delta^{-\frac{\alpha}{2}}(\#X)^{\frac{1}{2}}
\end{equation}
 for all $K,m,N$. Now we start to discuss the different cases of $q$ and $s$.
\begin{enumerate}
    \item If $\delta^{-\alpha} \ge N \geq  \delta^{-1}, N\delta \le k \le m^{\alpha-1},$ \eqref{Eq:GoalAfterL2Argument} is reduced to
    \begin{equation}\label{Eq:GoalAfterL2Argument2}
        N \cdot m^{\frac12} \cdot \delta^{\frac12}
\le
(m\delta)^{\frac1{s'}} \cdot \delta^{-\frac{\alpha}{q}} \cdot N^{\frac{1}{q'}}\Longleftrightarrow
(m\delta)^{\frac12-\frac1{s'}} \cdot N^{\frac{1}{q}}
\le
\delta^{-\frac{\alpha}{q}}.
    \end{equation}

    When $s= 2$, \eqref{Eq:GoalAfterL2Argument2} holds for all $q\in[1,\infty]$ and $ \alpha\in(1,2].$

    When $s>2$ $(\Rightarrow \tfrac12-\tfrac1{s'}<0),$ after computations, \eqref{Eq:GoalAfterL2Argument2} holds if and only if $\frac{\alpha-1}{q}+\frac1s\geq\frac12.$

    When $s<2$ $(\Rightarrow \frac{1}{2}-\frac{1}{s'}>0)$, \eqref{Eq:GoalAfterL2Argument2} holds for all $q\in [1,\infty]$ and $\alpha\in (1,2]$. 
    \item If $1 \le N \le \delta^{-1}, 1 \le K \le m^{\alpha-1}$, \eqref{Eq:GoalAfterL2Argument} is reduced to   
        \begin{equation}\label{Eq:GoalAfterL2Argument3}
            N\,m^{\frac12}\,\delta^{\frac12} \le (m\delta)^{\frac1{s'}}\,\delta^{-\frac{\alpha}{q}}\,N^{\frac1{q'}} \Longleftrightarrow
(m\delta)^{\frac12-\frac1{s'}}\,N^{\frac1q} \le \delta^{-\frac{\alpha}{q}}
        \end{equation}
        
    When $s=2$, \eqref{Eq:GoalAfterL2Argument3} holds for all $q\in[1,\infty]$ and $ \alpha\in(1,2].$

    When $s>2$ $(\Rightarrow \frac12-\frac1{s'}<0)$, after computations, \eqref{Eq:GoalAfterL2Argument3} holds if and only if $\frac{\alpha-1}{q}+\frac{1}{s}\geq \frac{1}{2}$.

    When $s<2$ $(\Rightarrow \frac{1}{2}-\frac{1}{s'}>0)$, \eqref{Eq:GoalAfterL2Argument3} holds for all $q\in [1,\infty]$ and $\alpha\in (1,2]$. 

    \item $\delta^{-\alpha} \ge N \ge \delta^{-1},  K \ge \max\{N\delta,\; m^{\alpha-1}\}$, \eqref{Eq:GoalAfterL2Argument} is reduced to 
    \begin{equation*}\label{Eq:GoalAfterL2Argument4}
        \frac{N}{K^{1/2}} \cdot m^{\alpha/2} \cdot \delta^{1/2}
\le
(m\delta)^{1/s'} \cdot \delta^{-\alpha/q} \cdot N^{1/q'}
    \end{equation*}
    \begin{enumerate}
        \item[(3.1)] $N\delta \ge m^{\alpha-1}$, plugging $K\geq N\delta$, after computations, we need 
        \begin{equation*}
  m^{\alpha/2-1/s'}
\lesssim
\delta^{\,1/s'-\alpha/q}\cdot N^{1/q'-1/2}
        \end{equation*}

        If $q\geq 2$ and $\frac{\alpha-1}{q}+\frac{1}{s}\geq \frac{1}{2}$ then this holds trivially.

        If $q\geq 2$ and $\frac{\alpha-1}{q}+\frac{1}{s}<\frac{1}{2}$, then this never holds.

        If $q\leq 2$ and $\frac{\alpha}{2}-\frac{1}{s'}\geq 0$ $(s\geq \frac{2}{2-\alpha})$, then this holds trivially.

         If $q\leq 2$ and $\frac{\alpha}{2}-\frac{1}{s'}< 0$ $(s\geq \frac{2}{2-\alpha})$, then this never holds.

         \item[(3.2)] $N\delta \le m^{\alpha-1}$, plugging $K\geq m^{\alpha-1}$ reduces to the discussion of item (1).
    \end{enumerate}
\item If $1 \le N \le \delta^{-1}, K \ge m^{\alpha-1}$, then this is reduced to item (2).
\end{enumerate}

To summarize, we need the following conditions to simultaneously hold:
\begin{enumerate}
    \item [(I)] $s\in[1,2], q\in[1,\infty]$ or $s\in[2,\infty], \frac{\alpha-1}{q}+\frac{1}{s}\ge \frac{1}{2}$

    \item [(II)] $q\in[1,2], s\in\left[1,\frac{2}{2-\alpha}\right]$ or $q\in[2,\infty], \frac{\alpha-1}{q}+\frac{1}{s}\ge \frac{1}{2}$.
\end{enumerate}
This is exactly the range we obtained in the item (2) of Proposition \ref{Prop:MainThmDiscretizedEndPointCase}. Note that the endpoint corresponds to $q=2$, which matches our $p=2$.

\begin{remark}
    It seems to us that the initial application of the triangle inequality accounts for the main loss of the entire arguments. For example, if $\alpha$ is close to $1$, then we should only expect a very low dimensional fractal set on each slice, and the triangle inequality may well be far from sharp. This somehow explains why Case (2) in Theorem~\ref{Thm:MainTheorem_0} only improves the previous results when $\alpha$ is not too small. However, we currently do not know how to reduce this loss. Naive ways such as choosing a suitable thickened scale to apply the triangle inequality do not seem to help.
\end{remark}

\section{High frequency decay and the proof of Theorem~\ref{Thm:MainTheorem_0}}\label{sec:high_freq_decay}

We have proved Theorem~\ref{Thm:MainTheorem} in the previous sections. In this section, we will prove Theorem~\ref{Thm:MainTheorem_0} from Theorem~\ref{Thm:MainTheorem}. The main difference between these two theorems is that Theorem~\ref{Thm:MainTheorem} involves averages over $\delta$-annuli and with an additional $\delta^{-\eps}$-loss, while Theorem~\ref{Thm:MainTheorem_0} involves genuine circular averages with no loss of scale. For application to various exceptional set estimates, it is important to get the scale-free estimate (\ref{Eq:MixedNormEstimate_0}) in Theorem~\ref{Thm:MainTheorem_0}.

Via H\"older's inequality in the radial direction of the $\delta$-spherical shell, it is easy to see that (\ref{Eq:MixedNormEstimate_0}) implies (\ref{Eq:MixedNormEstimate}). However, the reverse implication is more subtle and requires the so-called ``high frequency decay'' estimates. Such type of estimates are established via local smoothing estimates, and are used to remove the $\delta^{-\eps}$-loss from purely geometric arguments via interpolation. One may consult \cite[Section~1.3]{Zahl25} for references and backgrounds in the setting of maximal functions along planar curves, or \cite[Section~3.2]{HKL22} for formulations with respect to fractal weights. 

The framework works in general $\R^n$. Let us first set-up a few basic facts. Let $\psi$ be a smooth radial function on $\R^n$ such that $\spt\,\widehat{\psi} \subseteq \{\xi: 1/2 \leq |\xi| \leq 2\}$, $0\leq \widehat{\psi} \leq 1$, and $\sum_{j\in\Z}\widehat{\psi_j}(\xi)=1$ for any $\xi\neq 0$, where $\widehat{\psi_j}(\xi) \coloneqq \widehat{\psi}(2^{-j}\xi)$. We also let $\widehat{\varphi_0} \coloneqq 1 - \sum_{j\geq 1}\widehat{\psi_j}$. Let $\dd \sigma_r$ be the normalized surface measure on $S(0,r)$, then $\AA^n f(x,r) = |f|*\dd\sigma_r(x)$. For $t\in[1,2]$, an elementary computation shows that $\psi_j * \sigma_t$ is $O(2^j)$ and decays rapidly away from the $O(2^{-j})$ neighborhood of $S(0,r)$. To be specific, for all $j\geq 1$ and $N > 0$, we have
\begin{align}
    |\psi_j| * \sigma_r (x) &\lesssim_N 2^j (1+2^j||x|-r|)^{-N},\label{Eq:basic_1}\\
    |\varphi_0| * \sigma_r (x) &\lesssim_N (1+|x|)^{-N}.\label{Eq:basic_2}
\end{align}

\begin{proposition}\label{Prop:smooth_freq_cut}
    Suppose that for all $\eps>0$, there is a constant $C_\eps>0$ such that \begin{equation*}
        \norm{\AA_\delta^n f}_{L
        _x^q(B,\nu)(L^s_r(I))}\leq C_\eps\delta^{-\eps}\langle\nu\rangle_\alpha^{\frac{1}{q}}\norm{f}_{L^p(\R^n)},
    \end{equation*}
    holds for all $0<\delta<1.$ Then for all $\eps'>0$, there is a constant $C_{\eps'}'>0$ such that 
    \begin{equation*}
        \norm{\AA^n (f*\psi_j)}_{L
        _x^q(B,\nu)(L^s_r(I))}\leq C_{\eps'}'2^{\eps'j}\langle\nu\rangle_\alpha^{\frac{1}{q}}\norm{f}_{L^p(\R^n)},
    \end{equation*}
    holds for all $j\geq 1$ and $f$ with $\spt\,f\subseteq B(0,5)$.
\end{proposition}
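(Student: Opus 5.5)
The plan is to dominate $\AA^n(f*\psi_j)$ pointwise by a rapidly decaying superposition of $\delta$-averages with $\delta\sim 2^{-j}$, and then apply the hypothesis at each scale. Write $\AA^n(f*\psi_j)(x,r)\le |f|*(|\psi_j|*\sigma_r)(x)$. By (\ref{Eq:basic_1}), for $r\in I$,
\[
|\psi_j|*\sigma_r(y)\lesssim_N 2^j\sum_{k\ge 0}2^{-kN}\chi_{\{||y|-r|\le 2^{k-j}\}}(y).
\]
We split the sum into two ranges. For the terms with $2^{k-j}\le 1/10$, the normalized indicator of the shell is comparable to $1/\mathcal L^n(S_{2^{k-j}}(0,r))\sim 2^{j-k}$. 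Hence
\[
2^j\,|f|*\chi_{\{||\cdot|-r|\le 2^{k-j}\}}(x)\lesssim 2^{k}\,\AA^n_{2^{k-j}}|f|(x,r).
\]
For the finitely many remaining $k$ with $2^{k-j}\gtrsim 1$, we bound crudely using $\spt f\subseteq B(0,5)$. The contribution is at most $2^j2^{-kN}\|f\|_{L^1}$, and since $2^{k}\gtrsim 2^j$ this is $\lesssim 2^{-j(N-1)}\|f\|_{L^p}$. This is a bounded function of $(x,r)$, whose mixed norm over $B\times I$ is controlled by $\nu(B)^{1/q}\le\langle\nu\rangle_\alpha^{1/q}$.

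The main step then follows from Minkowski's inequality in the mixed norm $L^q_x(B,\nu)(L^s_r(I))$, which is a norm since $q,s\ge1$. It gives
\[
\norm{\AA^n(f*\psi_j)}\lesssim_N \sum_{k:\,2^{k-j}\le 1/10}2^{-k(N-1)}\norm{\AA^n_{2^{k-j}}|f|}+2^{-j(N-1)}\langle\nu\rangle_\alpha^{1/q}\norm{f}_{L^p}.
\]
Applying the hypothesis with $\delta=2^{k-j}$ and $\eps=\eps'$ bounds the $k$-th term by $C_{\eps'}2^{(j-k)\eps'}\langle\nu\rangle_\alpha^{1/q}\|f\|_{L^p}$; the hypothesis applies to $|f|$, which has the same $L^p$ norm. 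Choosing $N=2$, the series $\sum_k 2^{-k}2^{-k\eps'}$ converges, and the total is $\lesssim C_{\eps'}2^{\eps' j}\langle\nu\rangle_\alpha^{1/q}\|f\|_{L^p}$, as claimed.

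The only point needing care is the pointwise domination of the smooth kernel by shells. This requires that $\mathcal L^n(S_\delta(x,r))\sim\delta$ uniformly for $r\in[1,2]$ and $\delta\le 1/10$, which is immediate, and that the decomposition of $(1+2^j||y|-r|)^{-N}$ into dyadic shells is valid, which is standard. Measurability is harmless, since all the functions involved are continuous in $(x,r)$. I expect no genuine obstacle: the argument is a direct superposition of the hypothesis, and the support assumption on $f$ is used only for the tail $k\gtrsim j$.
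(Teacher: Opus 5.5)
Your proof is correct. It uses a different decomposition of the kernel from the paper's, although both start from (\ref{Eq:basic_1}), reduce to $\delta$-averages of $|f|$, and control the far part through $\norm{f}_{L^1}\lesssim\norm{f}_{L^p}$ using the support of $f$.

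\textbf{The paper's route.} The paper makes a single cut at $||y|-r| = 2^{-j+\eta j}$. The inner part is bounded by $2^{\eta j}\AA^n_{2^{-j+\eta j}}|f|$, so the hypothesis is used only once, at a scale slightly coarser than $2^{-j}$. The outer part is bounded by $2^{(1-\eta N)j}\norm{f}_{L^1}$. One then takes $\eta=\eps=\eps'/2$ and $N>1/\eta$. So the paper needs a decay order that grows as $\eps'\to 0$, and it incurs an extra normalization loss $2^{\eta j}$ on top of the $\delta^{-\eps}$ loss from the hypothesis.

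\textbf{Your route.} Your dyadic layer-cake majorant applies the hypothesis at every scale $2^{k-j}$ and absorbs the losses $2^{(j-k)\eps'}$ into the weights $2^{-k(N-1)}$. As a result, the fixed decay order $N=2$ suffices, and the only loss is the hypothesis's own $\delta^{-\eps'}$ at the natural scale $\delta\sim 2^{-j}$. The price is a countable Minkowski inequality in $L^q_x(B,\nu)(L^s_r(I))$. This is harmless: the summands are nonnegative and the mixed norm is a genuine norm for $q,s\geq 1$.

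\textbf{A small slip.} As written, your majorant has infinitely many $k$ with $2^{k-j}>1/10$, not finitely many. This does not affect the conclusion. Their contributions $2^j 2^{-kN}\norm{f}_{L^1}$ form a geometric series that sums to $\lesssim_N 2^{-j(N-1)}\norm{f}_{L^1}$. Alternatively, $x\in B$ and $\spt f\subseteq B(0,5)$ force $||x-y|-r|\le 5$, so you may truncate the majorant at $2^{k-j}\lesssim 1$.
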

\begin{proof}
    Note that by (\ref{Eq:basic_1}), for any $0<\eta<1/10$ and $j \geq 1$, we have
    \begin{align*}
        \AA^n(f*\psi_j)(x,r) &= |f*\psi_j|*\dd\sigma_r(x) 
        \leq         |f|*(|\psi_j|*\dd\sigma_r)(x)\\
        &\lesssim_N \int_{\R^n} |f(x-y)|\cdot 2^j (1+2^j||y|-r|)^{-N} \dd y\\
        &\leq 
        2^j\int_{||y|-r|\leq 2^{-j+\eta j}} |f(x-y)|\dd y + 2^{(1-\eta N) j}\int_{||y|-r|\geq 2^{-j+\eta j}} |f(x-y)|\dd y\\
        &\leq
        2^{\eta j} \AA_{2^{-j+\eta j}}^n f(x,r) + 2^{(1-\eta N)j} \norm{f}_{L^1(\R^n)}
    \end{align*}
    for any $x\in\R^n$ and $r\in I$. Therefore, 
    \begin{align*}
        \norm{\AA^n (f*\psi_j)}_{L
        _x^q(B,\nu)(L^s_r(I))}
        &\lesssim_N 
        \norm{\AA_{2^{-j+\eta j}}^n f}_{L
        _x^q(B,\nu)(L^s_r(I))} + 2^{(1-\eta N)j} \norm{\nu\chi_{B}}^{\frac{1}{q}} \norm{f}_{L^1(\R^n)}\\
        &\leq C_\eps 2^{\eta j + \eps j - \eta\eps j}\langle\nu\rangle_\alpha^{\frac{1}{q}}\norm{f}_{L^p(\R^n)} + 10^n 2^{(1-\eta N)j} \langle\nu\rangle_\alpha^{\frac{1}{q}} \norm{f}_{L^p(\R^n)},
    \end{align*}
    where we applied the assumed estimate for $\AA_\delta^n f$ with $\delta = 2^{-j+\eta j}$, and used $\norm{\nu\chi_{B}} \leq \langle\nu\rangle_\alpha$ as well as $\norm{f}_{L^1(\R^n)} \leq 6^n \norm{f}_{L^p(\R^n)}$ due to $\spt\, f \subseteq B(0,5)$. Finally, for any given $\eps'>0$, by taking $\eta = \eps = \eps'/2$ and $N > 1/\eta$, we get the desired bound.
\end{proof}
Now by interpolation, we can conclude that
\begin{proposition}\label{Prop:interpolate}
    Suppose that for all $\eps>0$, there is a constant $C_\eps>0$ such that \begin{equation*}
        \norm{\AA_\delta^n f}_{L
        _x^{q_1}(B,\nu)(L^{s_1}_r(I))}\leq C_\eps\delta^{-\eps}\langle\nu\rangle_\alpha^{\frac{1}{q_1}}\norm{f}_{L^{p_1}(\R^n)},
    \end{equation*}
    holds for all $0<\delta<1.$ Suppose also that for some $\eps_0>0$, the inequality
    \begin{equation}\label{Eq:High_freq_decay}
        \norm{\AA^n (f*\psi_j)}_{L
        _x^{q_2}(B,\nu)(L^{s_2}_r(I))}\leq C_{\eps_0}2^{-\eps_0 j}\langle\nu\rangle_\alpha^{\frac{1}{q_2}}\norm{f}_{L^{p_2}(\R^n)}
    \end{equation}
    holds for all $j \geq 1$. Then for any $(\frac{1}{p}, \frac{1}{q}, \frac{1}{s}) = (1-\theta)(\frac{1}{p_1}, \frac{1}{q_1}, \frac{1}{s_1}) + \theta(\frac{1}{p_2}, \frac{1}{q_2}, \frac{1}{s_2})$ with $\theta \in (0,1]$, there is a constant $C>0$ depending only on $\{C_\eps\}_\eps$, $C_{\eps_0}$, $n$, $\eps_0$, and $\theta$, such that
    \begin{equation*}
        \norm{\AA^n f}_{L
        _x^{q}(B,\nu)(L^{s}_r(I))}\leq C\langle\nu\rangle_\alpha^{\frac{1}{q}}\norm{f}_{L^{p}(\R^n)}.
    \end{equation*}
\end{proposition}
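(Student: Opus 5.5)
We decompose $f$ with the Littlewood--Paley pieces, $f = f*\varphi_0 + \sum_{j\geq 1} f*\psi_j$, and use the sublinearity of $\AA^n$ (since $\AA^n f = |f|*\dd\sigma_r$):
\[
\norm{\AA^n f}_{L_x^{q}(B,\nu)(L^{s}_r(I))} \leq \norm{\AA^n (f*\varphi_0)}_{L_x^{q}(B,\nu)(L^{s}_r(I))} + \sum_{j\geq 1}\norm{\AA^n (f*\psi_j)}_{L_x^{q}(B,\nu)(L^{s}_r(I))}.
\]
Since $x\in B$ and $r\in I$, only the values of $f$ on a bounded set matter. Replacing $f$ by $f\chi_{B(0,5)}$ is harmless up to Schwartz tails of $\psi_j,\varphi_0$, which are treated by the same argument on unit cubes. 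So we may assume $\spt\, f\subseteq B(0,5)$, as in Proposition~\ref{Prop:smooth_freq_cut}.

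\textbf{Low frequency.} By (\ref{Eq:basic_2}), $\AA^n(f*\varphi_0)(x,r)\lesssim \norm{f}_{L^1}\lesssim\norm{f}_{L^p}$ pointwise. Integrating gives a bound $\norm{\nu\chi_B}^{1/q}\norm{f}_{L^p}\leq\langle\nu\rangle_\alpha^{1/q}\norm{f}_{L^p}$.

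\textbf{High frequency, $\theta=1$.} When $\theta=1$ the claim follows directly by summing (\ref{Eq:High_freq_decay}) as a geometric series. For $\theta\in(0,1)$, Proposition~\ref{Prop:smooth_freq_cut} turns the first hypothesis into
\[
\norm{\AA^n (f*\psi_j)}_{L_x^{q_1}(B,\nu)(L^{s_1}_r(I))}\leq C'_{\eps'}2^{\eps' j}\langle\nu\rangle_\alpha^{1/q_1}\norm{f}_{L^{p_1}}.
\]

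\textbf{Interpolation for fixed $j$.} The goal is the interpolated bound
\[
\norm{\AA^n (f*\psi_j)}_{L_x^{q}(B,\nu)(L^{s}_r(I))}\lesssim 2^{((1-\theta)\eps'-\theta\eps_0)j}\langle\nu\rangle_\alpha^{1/q}\norm{f}_{L^p}.
\]
The map $f\mapsto \AA^n(f*\psi_j)$ is only sublinear because of $|f*\psi_j|$, so we linearize. Use the majorant $T_jf(x,r):=\int |f(x-y)|\,(|\psi_j|*\sigma_r)(y)\,\dd y$, or more simply replace $|f*\psi_j|$ by $(f*\psi_j)\cdot g$ with $|g|\leq 1$ a measurable unimodular function and note that the bounds are uniform in $g$. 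Either way we get a linear operator in $f$ with values in the mixed-norm space $L^q(\nu;L^s(I))$.

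We then apply complex (Riesz--Thorin type) interpolation for mixed-norm Lebesgue spaces, using the Benedek--Panzone theory: $[L^{q_1}(L^{s_1}),L^{q_2}(L^{s_2})]_\theta=L^q(L^s)$. The constants interpolate as $\langle\nu\rangle_\alpha^{(1-\theta)/q_1+\theta/q_2}=\langle\nu\rangle_\alpha^{1/q}$.

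\textbf{Main obstacle.} The hard part is making this interpolation rigorous for the sublinear operator while keeping both endpoint bounds valid. If one uses the positive majorant $T_j$ in place of $\AA^n(f*\psi_j)$, the decay hypothesis (\ref{Eq:High_freq_decay}) is only stated for $\AA^n(f*\psi_j)$ itself and may not transfer to $T_j$. For that reason I would first try the linearization via a unimodular $g(x,r,\cdot)$, which preserves both hypotheses verbatim. If that fails, I would fall back on Stein's interpolation of analytic families, applied to $f\mapsto (f*\psi_j)g$.

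\textbf{Summation.} Choose $\eps'<\theta\eps_0/(2(1-\theta))$. The exponent $(1-\theta)\eps'-\theta\eps_0$ is then negative, so the $j$-sum converges. The resulting constant depends only on the stated quantities.
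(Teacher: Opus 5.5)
Your proposal is correct and follows the paper's proof essentially step by step. The shared ingredients are: Proposition~\ref{Prop:smooth_freq_cut} for the $2^{\eps' j}$ bound on each piece, Benedek--Panzone mixed-norm complex interpolation at fixed $j$, geometric summation with $\eps'$ small relative to $\theta\eps_0/(1-\theta)$, the trivial low-frequency bound, and locality to reduce to $\spt\, f\subseteq B(0,5)$.

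Your unimodular linearization is valid, since $|T_g f|\le \AA^n(f*\psi_j)$ pointwise and so both hypotheses transfer verbatim. It properly justifies the step where the paper simply treats $f\mapsto\AA^n(f*\psi_j)$ as linear, so the Stein fallback is unnecessary. Likewise, no Schwartz tails arise if you pass to $f\chi_{B(0,5)}$ before decomposing, because $\AA^n f=\AA^n(f\chi_{B(0,5)})$ exactly on $B\times I$.
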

\begin{proof}
    By Proposition~\ref{Prop:smooth_freq_cut}, we know that for all $\eps'>0$, there is a constant $C_{\eps'}'>0$ depending only on $\{C_\eps\}_\eps$, $n$, and $\eps'$, such that 
    \begin{equation}\label{Eq:Geom_loss}
        \norm{\AA^n (f*\psi_j)}_{L
        _x^{q_1}(B,\nu)(L^{s_1}_r(I))}\leq C_{\eps'}'2^{\eps'j}\langle\nu\rangle_\alpha^{\frac{1}{q_1}}\norm{f}_{L^{p_1}(\R^n)},
    \end{equation}
    holds for all $j\geq 1$ and $f$ with $\spt\,f\subseteq B(0,5)$. By complex interpolation in the mixed-norm setting (\cite[Section~7, Theorem~2]{BP61}\footnote{See \cite[Section~4.1~and~5.1]{BL76} or \cite[Section~2.2]{HNVW16} for expositions in more general vector-valued settings.}), from (\ref{Eq:High_freq_decay}) and (\ref{Eq:Geom_loss}) we immediately obtain
    \begin{align*}
        \norm{\AA^n (f*\psi_j)}_{L
        _x^{q}(B,\nu)(L^{s}_r(I))}
        &\leq \big(C_{\eps'}'2^{\eps' j}\langle\nu\rangle_\alpha^{\frac{1}{q_1}}\big)^{1-\theta} \big(C_{\eps_0}2^{-\eps_0 j} \langle\nu\rangle_\alpha^{\frac{1}{q_2}} \big)^{\theta} \norm{f}_{L^{p}(\R^n)}\\
        &= C_{\eps'}'^{1-\theta}C_{\eps_0}^\theta 2^{(\eps'(1-\theta) - \eps_0\theta)j} \langle\nu\rangle_\alpha^{\frac{1}{q}} \norm{f}_{L^{p}(\R^n)}
    \end{align*}
    for any $\eps'>0$ and $f$ with $\spt\,f\subseteq B(0,5)$. Here for the interpolation, we are considering the linear operator $Tf \coloneqq \AA^n (f*\psi_j)$ restricted to those $f$ with $\spt\,f \subseteq B(0,5)$. 
    
    Now by taking $\eps' = \eps_0\theta/2$ and summing over all $j \geq 1$, we get the high frequency bound
    \begin{align*}
        \norm{\AA^n (f-f*\varphi_0)}_{L
        _x^{q}(B,\nu)(L^{s}_r(I))} 
        = 
        \norm{\AA^n (f*\sum_{j\geq1}\psi_j)}_{L
        _x^{q}(B,\nu)(L^{s}_r(I))}
        \leq
        C \langle\nu\rangle_\alpha^{\frac{1}{q}} \norm{f}_{L^{p}(\R^n)}
    \end{align*}
    for all $f$ with $\spt\,f\subseteq B(0,5)$, where $C>0$ depends only on $\{C_\eps\}_\eps$, $C_{\eps_0}$, $n$, $\eps_0$, and $\theta$. On the other hand, by H\"older's inequality and (\ref{Eq:basic_2}) with $N = n+1$, we have
    \begin{align*}
        \AA^n(f * \varphi_0)(x,r) \leq |f| * (|\varphi_0| * \dd\sigma_r)(x) \leq \norm{f}_{L^{p}(\R^n)} \norm{|\varphi_0| * \dd\sigma_r}_{L^{p'}(\R^n)} \lesssim_n \norm{f}_{L^{p}(\R^n)}
    \end{align*}
    for any $x\in\R^n$ and $r\in I$. So we get the low frequency bound
    \begin{align*}
        \norm{\AA^n (f*\varphi_0)}_{L
        _x^{q}(B,\nu)(L^{s}_r(I))} \lesssim_n \norm{\nu\chi_B}^{\frac{1}{q}} \norm{f}_{L^{p}(\R^n)} \leq \langle\nu\rangle_\alpha^{\frac{1}{q}} \norm{f}_{L^{p}(\R^n)}. 
    \end{align*}
    Combining the two bounds, we conclude that
    \begin{equation*}
        \norm{\AA^n f}_{L
        _x^{q}(B,\nu)(L^{s}_r(I))}\leq C\langle\nu\rangle_\alpha^{\frac{1}{q}} \norm{f}_{L^{p}(\R^n)}
    \end{equation*}
    for some slightly modified $C>0$ and all $f$ with $\spt\,f \subseteq B(0,5)$. 
    
    Finally, note that $\norm{\AA^n f}_{L_x^{q}(B,\nu)(L^{s}_r(I))} = \norm{\AA^n (f\chi_{B(0,5)})}_{L_x^{q}(B,\nu)(L^{s}_r(I))}$ by the locality of $\AA^n$, so the proof is completed.
\end{proof}

Inequalities of the form (\ref{Eq:High_freq_decay}) are the ``high frequency decay'' estimates mentioned before. For the proof of Theorem~\ref{Thm:MainTheorem_0}, we will only need the following result in $\R^2$:
\begin{proposition}[\rm \cite{CDK25}, Proposition~4.12]\label{Prop:CDK_decay}
    Let $\overline{\nu}$ be a non-zero Borel measure on $\R^2 \times I$. If $1 < \alpha \leq 3$, then for any $p>3$, there exists $\eps = \eps(\alpha,p)>0$ such that
    \begin{align*}
        \norm{\AA(f * \psi_j)}_{L^p(\overline{\nu})} \lesssim 2^{-\eps j} \langle\overline{\nu}\rangle_\alpha^{\frac{1}{q}} \norm{f}_{L^{p}(\R^2)}.
    \end{align*}
    If $2 < \alpha \leq 3$, then for $\eps = (\alpha - 2)/2 >0$, we have
    \begin{align*}
        \norm{\AA(f * \psi_j)}_{L^2(\overline{\nu})} \lesssim 2^{-\eps j} \langle\overline{\nu}\rangle_\alpha^{\frac{1}{2}} \norm{f}_{L^{2}(\R^2)}.
    \end{align*}
\end{proposition}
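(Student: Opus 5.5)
The plan is to pass to the half-wave propagator and then prove weighted extension bounds for the cone. Write $f_j := f*\psi_j$. The asymptotics of $\widehat{\sigma_r}$ give
\begin{equation*}
\AA f_j(x,r)=2^{-j/2}\sum_{\pm} e^{\pm ir\sqrt{-\Delta}}\,a_\pm(r,2^{-j}D) f_j(x)+O(2^{-Nj}\norm{f}_{L^p}),
\end{equation*}
where the $a_\pm$ are symbols of order zero. So, modulo harmless symbol factors, both claims reduce to estimates of the form
\begin{equation*}
\norm{e^{it\sqrt{-\Delta}} f_j}_{L^p(\overline\nu)}\lesssim 2^{j\gamma}\langle\overline\nu\rangle_\alpha^{1/p}\norm{f}_{L^p(\R^2)}
\qquad\text{with } \gamma<\tfrac12 .
\end{equation*}
Throughout I localize $\overline\nu$ to the unit ball of $\R^3$; this is permitted because $r\in I$ and the statement is local.

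\textbf{The $L^2$ statement ($2<\alpha\le 3$).} I would prove this by duality together with the locally constant property of $\widehat{h\overline\nu}$. The adjoint of $g\mapsto e^{it\sqrt{-\Delta}}g$, restricted to frequencies $|\xi|\sim 2^j$, is essentially $h\mapsto \widehat{h\overline\nu}$ restricted to the cone $\Gamma_j=\{(\xi,\pm|\xi|):|\xi|\sim 2^j\}$. Since $h\overline\nu$ is supported in a unit ball, $|\widehat{h\overline\nu}|$ is essentially constant at unit scale. Therefore the surface integral over $\Gamma_j$ is bounded by the integral over its $1$-neighbourhood, and this in turn is bounded by $\int_{B(0,2^{j+2})}|\widehat{h\overline\nu}|^2$. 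The standard energy estimate then controls this quantity by $\lesssim 2^{j(3-\alpha)}\langle\overline\nu\rangle_\alpha\norm{h}^2_{L^2(\overline\nu)}$. (That estimate is proved by expanding the square and testing $\overline\nu$ against a Schwartz bump at scale $2^{-j}$, using $\overline\nu(B(z,\rho))\le\langle\overline\nu\rangle_\alpha\rho^\alpha$.) This yields $\gamma=(3-\alpha)/2$. Multiplying by the prefactor $2^{-j/2}$ gives the total factor $2^{-j(\alpha-2)/2}$, which is exactly the claimed decay rate $\eps=(\alpha-2)/2$.

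\textbf{The $L^p$ statement ($1<\alpha\le 3$, $p>3$).} Here the $L^2$ argument by itself fails once $\alpha\le2$, so I would use wave-packet geometry. The first step is to decompose $f_j$ into angular sectors of aperture $2^{-j/2}$. Each sector propagates as a sum of planks of dimensions $2^{-j}\times 2^{-j/2}\times 1$ in $\R^3$. Bourgain--Demeter $\ell^2$ decoupling for the cone (or, more sharply, Guth--Wang--Zhang square function / local smoothing) reduces the $L^p(\overline\nu)$ norm to a square function over planks; the locally constant property is used to replace $\overline\nu$ by its $2^{-j}$-average. The second step uses the ball condition: the $\overline\nu$-mass of a plank is at most $2^{j/2}\cdot\langle\overline\nu\rangle_\alpha 2^{-j\alpha/2}$, obtained by covering the plank with $2^{-j/2}$-balls. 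The third step interpolates this weighted plank bound against the trivial $L^\infty$ bound $|e^{it\sqrt{-\Delta}}f_j|\lesssim 2^{2j/p}\norm{f}_{L^p}$. The target is an exponent $\gamma(\alpha,p)<1/2$ whenever $\alpha>1$ and $p>3$; the threshold $p=3$ should emerge as the Wolff/Kolasa--Wolff exponent, i.e.\ the $\alpha=1^+$ endpoint.

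\textbf{Main obstacle.} The hard step is this last one, namely obtaining strictly positive decay all the way down to $\alpha=1^+$. Decoupling alone loses too much for small $\alpha$. At that endpoint I would instead invoke the Pramanik--Yang--Zahl fractal maximal bound, i.e.\ the $\alpha=1^+$ case recorded in the introduction via \cite[Theorem~1.13]{CDK25}, which gives $\delta^{-\eps}$-lossy geometric control. I would then interpolate that lossy bound with a high-$\alpha$ Fourier bound, in the spirit of Proposition~\ref{Prop:interpolate}, to extract a small $\eps(\alpha,p)>0$ for every $p>3$.
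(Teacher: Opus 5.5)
Your $L^2$ argument is correct and is the standard one. Once the $r$-dependence of the symbols is absorbed (expanding them in $r$ only produces unimodular factors, which can be moved into $h$), the adjoint bound via the unit-scale locally constant property and the energy estimate $\int_{B(0,R)}|\widehat{h\overline{\nu}}|^2\lesssim R^{3-\alpha}\langle\overline{\nu}\rangle_\alpha\norm{h}_{L^2(\overline{\nu})}^2$ give exactly the factor $2^{-j(\alpha-2)/2}$. The paper does not prove this proposition; it imports it from \cite{CDK25}.

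The $L^p$ half, however, has a genuine gap. Steps 1--3 never produce an exponent. The repair in your last paragraph, interpolating the lossy $\alpha=1$ bound ``with a high-$\alpha$ Fourier bound'', does not work. $\overline{\nu}$ is one fixed measure of dimension $\alpha$, possibly close to $1$. A bound valid only for high-dimensional weights does not apply to it, and there is no interpolation in the dimension parameter. Interpolation in the spirit of Proposition~\ref{Prop:interpolate} is in the Lebesgue exponent with the measure fixed. So the missing ingredient is a \emph{decaying} bound at some exponent $p_1>3$ that holds for \emph{every} $\alpha>1$.

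Contrary to your remark that decoupling loses too much for small $\alpha$, such a bound is available. Let $\chi$ be a smooth cutoff equal to $1$ on $I$ and set $F:=\chi(r)\AA(f*\psi_j)$. Up to Schwartz tails, $F$ has space-time frequencies in $B(0,C2^j)$. So for your localized $\overline{\nu}$ the locally constant property gives $\norm{F}_{L^{p_1}(\overline{\nu})}\lesssim 2^{j(3-\alpha)/p_1}\langle\overline{\nu}\rangle_\alpha^{1/p_1}\norm{F}_{L^{p_1}(\R^3)}$. The local smoothing estimate implied by Bourgain--Demeter decoupling gives $\norm{F}_{L^6(\R^3)}\lesssim 2^{-j/3+\eps' j}\norm{f}_{L^6(\R^2)}$. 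Together these give decay $2^{-j(\alpha-1)/6+\eps' j}$ at $p_1=6$ for all $\alpha>1$. The sharp local smoothing of \cite{GWZ20} gives $2^{-j(\alpha-1)/4+\eps' j}$ at $p_1=4$.

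Now apply Riesz--Thorin in $p$, for this same $\overline{\nu}$. Interpolate between that bound and the $\delta^{-\eps}$-lossy $L^3(\overline{\nu})$ bound for one-dimensional families of circles (\cite{PYZ24}, extending \cite{Wolff97}). That lossy bound applies because $\langle\overline{\nu}\rangle_1\le\langle\overline{\nu}\rangle_\alpha$ when $\alpha\ge1$. It becomes a $2^{\eps' j}$-lossy bound for $\AA(f*\psi_j)$ via Section~\ref{sec:discretization} and Proposition~\ref{Prop:smooth_freq_cut}. The interpolation gives some $\eps(\alpha,p)>0$ for $3<p<6$; for $p>6$, interpolate instead with the trivial $L^\infty$ bound. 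This is also where $p=3$ genuinely enters. With the crude transference above, Fourier-analytic bounds alone give decay only for $p>2(3-\alpha)$, which tends to $4$ as $\alpha\to1$.

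Two smaller points. First, do not invoke the lossless $\alpha=1^+$ theorem of \cite{CDK25}. Lossless bounds of that type are obtained from exactly this kind of high-frequency decay (compare Section~\ref{sec:high_freq_decay}), so using it here would be circular; only the lossy bound of \cite{PYZ24} is legitimate input. Second, your $L^\infty$ bound $2^{2j/p}$ is wrong for $e^{it\sqrt{-\Delta}}f_j$; for $p\ge2$ the correct bound is $2^{j(1/2+1/p)}$. This plays no role in the corrected argument.
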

\begin{proof}[Proof of Theorem~\ref{Thm:MainTheorem_0}]
    By taking $\overline{\nu} = \nu \otimes \mathcal{L}^1|_I$ and replacing $\alpha$ with $\alpha+1$ in Proposition~\ref{Prop:CDK_decay}, we get     \begin{equation*}
        \norm{\AA (f*\psi_j)}_{L
        _x^{p}(B,\nu)(L^{p}_r(I))}\lesssim 2^{-\eps j}\langle\nu\rangle_\alpha^{\frac{1}{p}}\norm{f}_{L^{p}(\R^2)}
    \end{equation*}
    for some $\eps = \eps(\alpha,p)>0$, whenever $0 < \alpha \leq 2$ and $p>3$, or $1 < \alpha \leq 2$ and $p=2$. Here we are using the fact that $\langle\overline{\nu}\rangle_{\alpha+1} \lesssim \langle\nu\rangle_\alpha$.

    Let us first prove Case (2). By Proposition~\ref{Prop:interpolate}, we can interpolate the high frequency decay estimate with Case (2) in Theorem~\ref{Thm:MainTheorem} to obtain (\ref{Eq:MixedNormEstimate_0}) for any $(\frac{1}{p}, \frac{1}{q}, \frac{1}{s}) = (1-\theta)(\frac{1}{2}, \frac{1}{2}, \frac{2-\alpha}{2}) + \theta(\frac{1}{2}, \frac{1}{2}, \frac{1}{2})$, where $\theta \in (0,1]$. In particular, by taking $\theta$ arbitrarily small, we immediately see that (\ref{Eq:MixedNormEstimate_0}) holds for $p=q=2$ and $1 \leq s < \frac{2}{2-\alpha}$.
    
    Now we turn to Case (1), which is a bit subtle as the high frequency decay estimate only holds for $p>3$ instead of $p=3$. Fortunately, this endpoint issue can be handled by an additional interpolation step. As before, by Proposition~\ref{Prop:interpolate} and Case (1) in Theorem~\ref{Thm:MainTheorem}, we obtain (\ref{Eq:MixedNormEstimate_0}) for any $(\frac{1}{p}, \frac{1}{q}, \frac{1}{s}) = (1-\theta)(\frac{1}{3}, \frac{1}{3}, \frac{1-\alpha}{3}) + \theta(\frac{1}{4}, \frac{1}{4}, \frac{1}{4}) = (\frac{1}{3}-\frac{\theta}{12}, \frac{1}{3}-\frac{\theta}{12}, \frac{1-\alpha}{3} + \frac{4\alpha-1}{12}\theta)$, where $\theta \in (0,1]$. On the other hand, by Fubini's theorem, one can easily see that (\ref{Eq:MixedNormEstimate_0}) trivially holds for $(\frac{1}{p}, \frac{1}{q}, \frac{1}{s}) = (1,0,1)$. Therefore, by a further complex interpolation in the mixed-normed setting with weight $u = \theta/(8+\theta)$, we get (\ref{Eq:MixedNormEstimate_0}) for any
    \begin{align*}
        \Big(\frac{1}{p}, \frac{1}{q}, \frac{1}{s}\Big) &= (1-u)\Big(\frac{1}{3}-\frac{\theta}{12}, \frac{1}{3}-\frac{\theta}{12}, \frac{1-\alpha}{3} + \frac{4\alpha-1}{12}\theta\Big) 
        +
        u (1,0,1)\\
        &= \Big( \frac{1}{3}, \frac{8}{8+\theta}\Big(\frac{1}{3} - \frac{\theta}{12}\Big), \frac{8}{8+\theta}\Big(\frac{1-\alpha}{3} + \frac{4\alpha-1}{12}\theta\Big) + \frac{\theta}{8+\theta}\Big).
    \end{align*}
    By Fact~\ref{Fact:Holder}, since $\frac{8}{8+\theta}(\frac{1}{3} - \frac{\theta}{12}) < \frac{1}{3}$, we must have (\ref{Eq:MixedNormEstimate_0}) for $p=q=3$ with the same $s$. Again by Fact~\ref{Fact:Holder}, since $\frac{1}{s} \rightarrow \frac{1-\alpha}{3}$ as $\theta \rightarrow 0$, we must have (\ref{Eq:MixedNormEstimate_0}) for $p=q=3$ and $1\leq s < \frac{3}{1-\alpha}$, as desired.
\end{proof}

Before ending this section, we point out that because of the existence of measure zero Besicovitch–
Rado–Kinney sets \cite{BR68,Kinney68}, when $0 < \alpha \leq 1$, one cannot expect the space-time weighted estimates of the form 
\begin{align*}
    \norm{\AA f}_{L^q(\overline{\nu})} \lesssim \langle\overline{\nu}\rangle_\alpha^{\frac{1}{q}} \norm{f}_{L^{p}(\R^2)}
\end{align*}
to hold, and any estimate for $\norm{\AA_\delta f}_{L^q(\overline{\nu})}$ must involve some $\delta^{-\eps}$-loss when $p<\infty$. This indicates that there is no high frequency decay when $0 < \alpha \leq 1$, and is one of the reasons why Ham-Ko-Lee \cite{HKL22} only focus on $1 < \alpha \leq 3$. In contrast, our Theorem~\ref{Thm:MainTheorem_0} indeed contains estimates for low-dimensional $\nu$ with no loss of scale when $p=q=3$. This is because by giving up a little $L^s$ integrability in $r$, we are able to take advantage of the high frequency decay estimates of $\overline{\nu} = \nu \otimes \mathcal{L}^1|_I$, whose dimension is increased by $1$. In other words, in the regime of mixed-norm estimates, high frequency decay is always no big issue if we are willing to abandon the endpoints.
\section{Exceptional set estimates for the wave equation}\label{sec:wave_eqn}
\subsection{Preliminaries}\label{subsec:prelim} When proving exceptional set estimates for partial differential equations, it is often the case that one needs to be careful about whether or not it makes sense to consider the behavior over a lower-dimensional set. In this subsection, we will set up some basic estimates to argue that our target is properly formulated. More precisely, we will show that the solution of the wave equation on $\R^2$ is continuous whenever $h$ has local integrability slightly better than $L^2$. The discussions are expected to be standard, but since we could not find a suitable reference in the literature, we present everything in detail for the reader's convenience.

Recall that in Theorem~\ref{Thm:wave_eqn}, we are considering the following linear wave equation:
\[
\begin{cases}
u_{tt}-\Delta_x u=0, \qquad x\in \mathbb{R}^2,\ t\in (0,\infty),\\
u(x,0)=0,\\
u_t(x,0)=h(x).
\end{cases}
\]
By the well-known Poisson's formula (see \cite[Section~2.4]{Evans10}), the (formal) solution is given by 
\begin{align}\label{Eq:Poisson}
    u(x,t) = \frac{1}{2\pi}\int_{B(x,t)} \frac{h(y)}{(t^2 - |y-x|^2)^{1/2}} \dd y.
\end{align}
We can rewrite (\ref{Eq:Poisson}) formula in terms of $\AA$ in the polar coordinate system:
\begin{align*}
    u(x,t) = \int_0^t \AA h(x,r) \frac{r}{\sqrt{t^2-r^2}} \dd r,
\end{align*}
Let $F_x(r) \coloneqq \AA h(x,r)r$ and $\J$ is an Abel-type transform (see \cite{GV91} for an exposition) defined by $$\J F(t) \coloneqq \int_0^t \frac{F(r)}{\sqrt{t^2-r^2}} \dd r,$$ then we can further rewrite the formal solution (\ref{Eq:Poisson}) as 
\begin{align*}
    u(x,t) = \J F_x (t).
\end{align*}

Intuitively, since the singularity of the kernel resembles $|\cdot|^{-1/2}$ (Riesz potential of order $\frac{1}{2}$), if $F_x \in L^s[0,t]$ for some $s>2$ and $t>0$, then we should expect $u(x,\cdot) = \J F_x(\cdot) \in C^{\frac{1}{2}-\frac{1}{s}}[0,t]$ by the Sobolev embedding theorem:
\begin{equation*}
    F\in L^s \xrightarrow[\text{via ``Riesz potential''}]{\text{``gain'' $\frac{1}{2}$ derivative}} \mathcal JF\in L^{s}_{1/2}\hookrightarrow C^{1/2-1/s}.
\end{equation*}
Such a smoothing property for Abel-type transforms has been rigorously established (see \cite[Theorem~4.1.7]{GV91}), and here we only record a special case for our purpose:
\begin{proposition}\label{Prop:Abel_transform}
   For $F \in L^s[0,t]$ with $s > 2$, we have $\J F \in C^{\frac{1}{2}-\frac{1}{s}}[0,t]$. In fact, for any $0 < t \leq T<\infty$ we have
    \begin{align*}
    \norm{\J F}_{C^{\frac{1}{2}-\frac{1}{s}}[0,t]} \leq C_{s,T} \norm{F}_{L^s[0,t]},
    \end{align*}
    where $C_{s,T}$ is a finite constant depending only on $s$ and $T$. 
\end{proposition}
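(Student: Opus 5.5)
We prove the estimate directly, using Hölder's inequality near the endpoint $r=t$. Set $\beta \coloneqq \frac12-\frac1s\in(0,\frac12)$ and write $\norm{F}_s \coloneqq \norm{F}_{L^s[0,t]}$.

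\textbf{Sup bound.} Substitute $r=t\rho$ to get
\begin{align*}
\J F(t')=\int_0^1 F(t'\rho)(1-\rho^2)^{-1/2}\,\dd\rho .
\end{align*}
By Hölder's inequality, $|\J F(t')|\le \norm{F(t'\cdot)}_{L^s[0,1]}\,\norm{(1-\rho^2)^{-1/2}}_{L^{s'}[0,1]}$. The second factor is finite since $s'<2$. The first equals $t'^{-1/s}\norm{F}_{L^s[0,t']}$. Alternatively, Hölder's inequality in the original variables gives $|\J F(t')|\lesssim_s t'^{\,1/s'-1/2}\norm{F}_s=t'^{\beta}\norm{F}_s$. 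This is bounded by $T^\beta\norm{F}_s$, and it also shows $\J F(t')\to0$ as $t'\to0$. So we set $\J F(0)=0$.

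\textbf{Hölder seminorm.} Fix $0\le t_1<t_2\le t$ and put $h\coloneqq t_2-t_1$. If $t_1\le 2h$, the sup bound gives $|\J F(t_i)|\lesssim t_i^\beta\norm{F}_s\lesssim h^\beta\norm{F}_s$, and we are done. Now assume $t_1>2h$. Split
\begin{align*}
\J F(t_2)-\J F(t_1)
&=\int_{t_1-h}^{t_2}\frac{F(r)}{\sqrt{t_2^2-r^2}}\,\dd r-\int_{t_1-h}^{t_1}\frac{F(r)}{\sqrt{t_1^2-r^2}}\,\dd r\\
&\quad+\int_0^{t_1-h}F(r)\Bigl(\frac{1}{\sqrt{t_2^2-r^2}}-\frac{1}{\sqrt{t_1^2-r^2}}\Bigr)\dd r .
\end{align*}

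For the two local terms, use $t_i^2-r^2\ge t_i(t_i-r)$. Hölder's inequality then bounds each by
\begin{align*}
\norm{F}_s\, t_i^{-1/2}\Bigl(\int_0^{2h}u^{-s'/2}\,\dd u\Bigr)^{1/s'}\lesssim t_i^{-1/2}h^{\beta}\norm{F}_s .
\end{align*}
This loses a factor $t_i^{-1/2}$ when $t_i$ is small. To remove it, avoid the crude bound $\sqrt{t_i^2-r^2}\ge\sqrt{t_i(t_i-r)}$ and use $\sqrt{t_i^2-r^2}=\sqrt{(t_i-r)(t_i+r)}\ge\sqrt{t_i-r}\sqrt{t_i}$ together with $t_i\ge t_1>2h$. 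Even so, the scaling requires care. The cleanest route is to rescale by $t_2$ to the case $t_2=1$: the claimed inequality is scale-covariant, because $\norm{F(\lambda\cdot)}_{L^s}=\lambda^{-1/s}\norm{F}_s$ and $\J$ maps $F(\lambda\cdot)$ to $(\J F)(\lambda\cdot)$, which exactly matches the $\beta$-Hölder weight $\lambda^{1/2-1/s}$. Once $t_2=1$ and $t_1>2h$, we have $t_1\ge 2/3$, and the local terms are $\lesssim h^\beta\norm{F}_s$.

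For the far term, with $t_1\sim t_2\sim1$ after rescaling, the mean value theorem gives a kernel difference $\lesssim h\,(t_1-r)^{-3/2}$ for $r<t_1-h$. Then Hölder's inequality yields
\begin{align*}
h\Bigl(\int_h^\infty u^{-3s'/2}\,\dd u\Bigr)^{1/s'}\norm{F}_s\sim h\cdot h^{-3/2+1/s'}\norm{F}_s=h^{\beta}\norm{F}_s ,
\end{align*}
using $3s'/2>1$.

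Combining the sup bound and the seminorm bound, rescaled back, gives $\norm{\J F}_{C^\beta[0,t]}\le C_{s,T}\norm{F}_s$. The $T$-dependence enters only through the $L^\infty$ part. The main obstacle is bookkeeping the scaling so that the constants are uniform in $t_1$ near $0$. The rescaling reduction to $t_2=1$, together with the trivial case $t_1\le2h$, handles this.
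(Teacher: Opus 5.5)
\textbf{There is a genuine gap, and it cannot be closed: the proposition is false as stated.}

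Your unit-scale estimates are correct: the two local terms and the far term, once $t_2=1$ and $t_1\in(\frac23,1)$. The two steps that carry them down to small times are wrong.

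\textbf{The sup bound.} Your ``alternative'' H\"older computation uses the homogeneity of the Riemann--Liouville kernel $(t'-r)^{-1/2}$. The kernel of $\J$ is homogeneous of degree $-1$, and in fact
\[
\norm{(t'^2-r^2)^{-1/2}}_{L^{s'}_r[0,t']}=c_s\,t'^{\frac1{s'}-1}=c_s\,t'^{-\frac1s}.
\]
This agrees with your first (rescaled) computation, not with $t'^{\beta}$.

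\textbf{The scale covariance.} The claimed matching fails. $\J$ commutes with dilations and $\norm{F(\lambda\cdot)}_{L^s}=\lambda^{-1/s}\norm{F}_{L^s}$, whereas the $\beta$-H\"older seminorm of $(\J F)(\lambda\cdot)$ is $\lambda^{\beta}[\J F]_{\beta}$. Since $\beta+\frac1s=\frac12\neq0$, these do not match. Undoing your normalization $t_2=1$ therefore gives only $|\J F(t_2)-\J F(t_1)|\lesssim t_2^{-1/2}|t_2-t_1|^{\beta}\norm{F}_{L^s}$ for $t_1>2h$. That is exactly the factor you set out to remove. With the correct sup bound $|\J F(t')|\lesssim t'^{-1/s}\norm{F}_{L^s[0,t']}$, the case $t_1\le 2h$ is not handled either. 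A quick sanity check: $F\equiv1$ gives $\J F\equiv\pi/2$ on $(0,t]$, so your claim $\J F(t')\to0$ already fails.

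\textbf{Why no bookkeeping can fix this.} These losses are sharp; the paper itself gives no argument here, only the citation \cite[Theorem~4.1.7]{GV91}. Two counterexamples:
\begin{itemize}
\item For $F=\chi_{[0,\eps]}$ one has $\J F\equiv\pi/2$ on $(0,\eps]$ and $\J F(2\eps)=\pi/6$, while $\norm{F}_{L^s}=\eps^{1/s}\to0$. So even the $L^\infty$ part of the claimed estimate fails.
\item $F(r)=r^{-a}$ with $0<a<\frac1s$ lies in $L^s[0,1]$, but $\J F(t)=c_a t^{-a}$ is unbounded. So the qualitative claim fails too.
\end{itemize}

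\textbf{What is true.} The correct statement needs a weight at $r=0$. Substituting $\sigma=r^2$ gives
\[
\J F(t)=\tfrac12\int_0^{t^2}(t^2-\sigma)^{-1/2}G(\sigma)\,\dd\sigma,\qquad G(\sigma)\coloneqq\sigma^{-1/2}F(\sigma^{1/2}),\qquad \norm{G}_{L^s[0,t^2]}^s=2\int_0^t|F(r)|^s r^{1-s}\,\dd r .
\]
Apply your argument to the kernel $(\tau-\sigma)^{-1/2}$. There the scaling genuinely matches, and your bound $|KG(\tau)|\lesssim\tau^{\beta}\norm{G}_{L^s}$ is correct. Composing with the Lipschitz map $t\mapsto t^2$ then yields
\[
\norm{\J F}_{C^{\beta}[0,t]}\lesssim_{s,T}\Big(\int_0^t|F(r)|^s r^{1-s}\,\dd r\Big)^{1/s}.
\]
Alternatively, your unweighted estimate holds on $[t_0,T]$ with a constant depending on $t_0>0$. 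For the paper's $F_x(r)=r\,\AA h(x,r)$, the relevant weighted quantity is $\int_0^t|\AA h(x,r)|^s\,r\,\dd r$.
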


Let us first assume that $h \in L^p(\R^2)$ for some $p>2$. The general case where $h \in L_{\rm loc}^{2+}(\R^2)$ then follows immediately. Note that the integral in (\ref{Eq:Poisson})  converges absolutely by H\"older's inequality, so the solution $u$ is pointwise well-defined. Also, for each $x\in\R^2$, we have the trivial estimate
\begin{align*}
    \int_0^t F_x(r)^p \dd r = \int_0^t \AA h(x,r)^p r^p \dd r \leq t^{p-1} \int_0^t \AA |h|^p(x,r) r \dd r = \frac{1}{2\pi} t^{p-1} \int_{B_{x,t}} |h|^p \leq t^{p-1}\norm{h}_{L^p(\R^2)}^p.
\end{align*}
Thus by applying Proposition~\ref{Prop:Abel_transform} with $F = F_x(r)$, $s=p$, and $t=T$, we know that 
\begin{align*}
    \norm{u(x,\cdot)}_{C^{\frac{1}{2}-\frac{1}{p}}[0,T]} \leq C_{p,T} \norm{h}_{L^p(\R^2)}
\end{align*}
uniformly in $x$, where $C_{p,T}$ is some finite constant depending only on $p$ and $T$. This process says that improving the Lebesgue regularity of the initial velocity $h$ gives better H\"older regularity to the solution $u$.

Firstly, let us check in what sense does (\ref{Eq:Poisson}) satisfies the two initial conditions. Firstly, by H\"older's inequality followed by a change of variables $y \mapsto t\Tilde{y}$, for every $x \in R^2$, we have
\begin{align}\label{Eq:u(x,0)}
     |u(x,t)| &\leq \frac{1}{2\pi} \int_{B(0,t)} \frac{|h|(x+y)}{(t^2 - |y|^2)^{1/2}} \dd y \nonumber\\
     &\leq \frac{1}{2\pi} \left[ \int_{B(0,t)} \frac{1}{(t^2-|y|^2)^{p'/2}} \dd y \right]^{1/p'} \norm{h}_{L^p(\R^2)}\nonumber\\
     &= \frac{1}{2\pi} \left[ t^{2-p'} \int_{B(0,1)} \frac{1}{(1-|\Tilde{y}|^2)^{p'/2}} \dd y \right]^{1/p'} \norm{h}_{L^p(\R^2)}\\
     &\rightarrow 0 \nonumber
\end{align}
as $t \mapsto 0^+$, where we used $p'<2$ in the last step. In particular, by extending the definition of $u(x,t)$ in (\ref{Eq:Poisson}) to $t=0$ by the initial condition $u(x,0)=0$, we know that it is uniformly H\"older continuous in $t\in[0,T]$. Secondly, 
by a change of variables $r \mapsto t\Tilde{r}$, we have 
\begin{align*}
    u(x,t) = t\int_0^1 \AA h(x,t\Tilde{r}) \frac{1}{\sqrt{1-\Tilde{r}^2}}\dd \Tilde{r}
\end{align*}
Combining the $L^p$ ($p>2$) boundedness of Bourgain's circular maximal function \cite{Bourgain86} with arguments from real analysis \cite[Theorem~5.6]{Tao_Lecture}, we know that for almost every $x \in \R^2$, the averages $\AA h (x,\cdot)$ are well-defined and finite for all $r>0$, continuous in $r$ (see also \cite{Marstrand87}), and $\lim_{r\rightarrow 0^+} \AA h (x,r) = h(x)$. Here we do not identify $L^p$ functions if they agree almost everywhere. Therefore, for almost every $x\in \R^2$, we have
\begin{align*}
    u_t(x,0) &= \lim_{t\rightarrow0^+} \frac{t\int_0^1 \AA h(x,t\Tilde{r}) \frac{1}{\sqrt{1-\Tilde{r}^2}}\dd \Tilde{r} - 0}{t} \\ 
    &= \lim_{t\rightarrow0^+} \int_0^1 \AA h(x,t\Tilde{r}) \frac{1}{\sqrt{1-\Tilde{r}^2}}\dd \Tilde{r} - 0  \\
    &= h(x) \int_0^1 \frac{1}{\sqrt{1-\Tilde{r}^2}}\dd \Tilde{r}\\
    &= h(x),
\end{align*}
which coincides with our initial condition $u_t(x,0) = h(x)$. Note that $h\in L^p(\R^2)$ is only well-defined almost everywhere, so this is the best sense that we could hope for.

Secondly, let us check the regularity of $u(x,t)$ in $x$. For any fixed $t>0$, we can write (\ref{Eq:Poisson}) as the convolution of $h \in L^p(\R^2)$ and $(2\pi)^{-1}(t^2 - |\cdot|^2)^{-1/2} \chi_{B(0,t)} \in L^{p'}(\R^2)$, so $u(\cdot,t)$ is a uniformly continuous function on $\R^2$. In fact, via similar computation as in (\ref{Eq:u(x,0)}), one can easily see that the family $\{u(x,t)\}_{t\in[0,T]}$ is uniformly equicontinuous (Recall $u(\cdot,t) \equiv 0$ for $t=0$.). To conclude, on any $\R^2\times [0,T]$ ($0<T<\infty$), the solution $u(x,t)$ is a multivariate function uniformly equicontinuous in $x \in \R^2$ and uniformly H\"older continuous in $t\in [0,T]$, and so must be jointly uniformly continuous for in $(x,t)$. 

Now by a standard approximation procedure, one can easily see that $u(x,t)$ satisfies the linear wave equation $u_{tt} - \Delta_x u = 0$ on $\R^2 \times (0,\infty)$ in the weak sense (distributional sense). Moreover, as we previously showed, the initial data enjoy very nice interpretations. As a side remark, by clasical theory of linear wave equations, our $u(x,t)$ is also the unique continuous function on $\R^2\times[0,\infty)$ that satisfies the equation in the sense of distributions. 

Finally, by the local nature of the truncated kernel in the Poisson formula (\ref{Eq:Poisson}), it is easy to see that even if we only assume $h \in L_{\rm loc}^{2+}(\R^2)$ at the beginning, the same continuity property of solution $u$ (as in the $h \in L^p(\R^2)$ case) would still hold true, except that we only have local uniformity. In this case, (a priori) we can explicitly tell whether or not $u(x,\cdot) \in C_{\rm loc}^\beta[0,\infty)$ for each $x\in\R^2$ and $\beta\in(0,1]$, and so the problem of bounding $\dim_H E(u,\beta) = \dim_H\{x\in\mathbb{R}^2: u(x,\cdot) \not\in C_{\rm loc}^\beta[0,\infty)\}$ is properly formulated.

\subsection{From \texorpdfstring{$L^s_r(I)$}{Lsr(I)} to \texorpdfstring{$L^s_r[0,2]$}{Lsr(0,2]}}\label{subsec:extend_0}
In the previous subsection, we have seen that if $h\in L^{2+}(\R^2)$, then for every $x\in\R^2$, we always have $u(x,\cdot) \in C_{\rm loc}^{\frac{1}{2} - \frac{1}{p}}[0,\infty)$, and so $E(u,\beta) = \varnothing$ for $\beta = \frac{1}{2}-\frac{1}{p}>0$. The question is: Can we raise the exponent $\beta$ upon sacrificing a ``small'' set of $x$ in the sense of $\dim_H$? This should intuitively be achievable, because in Theorem~\ref{Thm:MainTheorem_00} we have estimates of the general form
\begin{equation}\label{Eq:main_0}
    \norm{\AA f}_{L
    _x^q(B,\nu)(L^s_r(I))}\leq C \langle\nu\rangle_\alpha^{\frac{1}{q}}\norm{f}_{L^p(\R^n)},
\end{equation}
and if $s>p$, then we could convert better integrability into better H\"older continuity by Proposition~\ref{Prop:Abel_transform}. The main technical difficulty is that we only have $L^s_r(I)$ in (\ref{Eq:main_0}) instead of $L^s_r[0,2]$ (More general $L^s_r[0,T]$ can be handled similarly.). Also, we need to consider $F_x(r) = \AA h(x,r)r$ instead of $\AA f(x,r)$. In this subsection we will adopt a rescaling argument to deal with these issues, which also works in $\R^n$. For our purpose, we can mainly assume $q \geq p$.

Suppose that (\ref{Eq:main_0}) holds true for any $f$ and $\nu$, then for $\lambda \in (0,1]$ and $h\in L^p(\R^n)$, by change of variable, we have
\begin{align*}
    \norm{\AA h}_{L
    _x^q(B,\nu)(L^s_r(\lambda I))} 
    &= \Big(\int_{B}\Big(\int_{\lambda I}|\AA h(x,r)|^s\mathrm dr\Big)^\frac{q}{s}\mathrm d\nu(x)\Big)^{\frac{1}{q}}\\
    &= \Big(\int_{B}\Big( \lambda \int_{I} |\AA h(x,\lambda\Tilde{r})|^s \dd \Tilde{r} \Big)^\frac{q}{s}\dd\nu(x)\Big)^{\frac{1}{q}}.
\end{align*}
If we define $\Tilde{h}(\cdot) \coloneqq h(\lambda\cdot)$, then 
\begin{equation*}
    \Big(\int_{B}\Big( \lambda \int_{I} |\AA h(x,\lambda\Tilde{r})|^s \dd \Tilde{r} \Big)^\frac{q}{s}\dd\nu(x)\Big)^{\frac{1}{q}} = \Big(\int_{B}\Big( \lambda \int_{I} |\AA \Tilde{h}(x/\lambda,\Tilde{r})|^s \dd \Tilde{r} \Big)^\frac{q}{s}\dd\nu(x)\Big)^{\frac{1}{q}}.
\end{equation*}
Let $\Tilde{\nu}(\cdot) \coloneqq \lambda^{-\alpha} \nu(\lambda \cdot)$ and apply another change of variables, 
\begin{align*}
    \Big(\int_{B}\Big( \lambda \int_{I} |\AA \Tilde{h}(x/\lambda,\Tilde{r})|^s \dd \Tilde{r} \Big)^\frac{q}{s}\dd\nu(x)\Big)^{\frac{1}{q}} 
    &=\Big(\int_{\frac{1}{\lambda} B} \lambda^\alpha \Big( \lambda \int_{I} |\AA \Tilde{h}(x,\Tilde{r})|^s \dd \Tilde{r} \Big)^\frac{q}{s}\dd\Tilde{\nu}(x)\Big)^{\frac{1}{q}}\\
    &= \lambda^{\frac{\alpha}{q}+\frac{1}{s}} \Big(\int_{\frac{1}{\lambda} B} \Big( \int_{I} |\AA \Tilde{h}(x,\Tilde{r})|^s \dd \Tilde{r} \Big)^\frac{q}{s}\dd\Tilde{\nu}(x)\Big)^{\frac{1}{q}}.
\end{align*}

In particular, we have
\begin{align*}
    \langle \Tilde{\nu} \rangle_\alpha 
    = \sup_{x\in\R^n, \rho\in(0,1]} \frac{\Tilde{\nu}(B(x,\rho))}{\rho^\alpha} 
    = \sup_{x\in\R^n, \rho\in(0,1]} \frac{\nu(B(\lambda x, \lambda\rho))}{(\lambda\rho)^\alpha} 
    = \sup_{x\in\R^n, \rho\in(0,\lambda]} \frac{\nu(B(x, \rho))}{\rho^\alpha} 
    \leq \langle\nu \rangle_\alpha.
\end{align*}
Now the key observation is that we can cover $\frac{1}{\lambda} B$ by a finitely overlapping family of translates of $B$, which we denote by $\{B(x_i,1/4)\}_{i\in \Lambda}$. By the support property, (\ref{Eq:main_0}) actually means that
\begin{equation*}
    \norm{\AA f}_{L
    _x^q(B,\nu)(L^s_r(I))}\leq C \langle\nu\rangle_\alpha^{\frac{1}{q}}\norm{f}_{L^p(B(0,5))}
\end{equation*}
for any $f$ and $\nu$, which by translation invariance implies that
\begin{equation*}
    \big\Vert\AA \Tilde{h}\big\Vert_{L_x^q(B(x_i,1/4),\Tilde{\nu})(L^s_r(I))} 
    \leq C \langle \Tilde{\nu} \rangle_\alpha^{\frac{1}{q}} \big\Vert\Tilde{h}\big\Vert_{L^p(B(x_i,5))}.
\end{equation*}
Note also that the family $\{B(x_i,5)\}_{i\in\Lambda}$ is still finitely overlapping. Thus we can continue the above computations as follows: By triangle inequality 
\begin{align*}
    \norm{\AA h}_{L
    _x^q(B,\nu)(L^s_r(\lambda I))}
    &= \lambda^{\frac{\alpha}{q}+\frac{1}{s}} \Big(\int_{\frac{1}{\lambda} B} \Big( \int_{I} |\AA \Tilde{h}(x,\Tilde{r})|^s \dd \Tilde{r} \Big)^\frac{q}{s}\dd\Tilde{\nu}(x)\Big)^{\frac{1}{q}}\\
    &\leq \lambda^{\frac{\alpha}{q}+\frac{1}{s}} 
    \Big( \sum_{i\in\Lambda} \big\Vert\AA \Tilde{h}\big\Vert_{L_x^q(B(x_i,1/4),\Tilde{\nu})(L^s_r(I))}^q  \Big)^{\frac{1}{q}}.
\end{align*}
Apply \eqref{Eq:main_0} to each term, 
\begin{equation*}
    \lambda^{\frac{\alpha}{q}+\frac{1}{s}} 
    \Big( \sum_{i\in\Lambda} \big\Vert\AA \Tilde{h}\big\Vert_{L_x^q(B(x_i,1/4),\Tilde{\nu})(L^s_r(I))}^q  \Big)^{\frac{1}{q}}\leq  C \langle \Tilde{\nu} \rangle_\alpha^{\frac{1}{q}} \lambda^{\frac{\alpha}{q}+\frac{1}{s}} 
    \Big( \sum_{i\in\Lambda} \big\Vert\Tilde{h}\big\Vert_{L^p(B(x_i,5))}^q  \Big)^{\frac{1}{q}}.
\end{equation*}
Since $q\geq p$, $\ell^p\hookrightarrow\ell^q$, 
\begin{equation*}
    C \langle \Tilde{\nu} \rangle_\alpha^{\frac{1}{q}} \lambda^{\frac{\alpha}{q}+\frac{1}{s}} 
    \Big( \sum_{i\in\Lambda} \big\Vert\Tilde{h}\big\Vert_{L^p(B(x_i,5))}^q  \Big)^{\frac{1}{q}}\leq C \langle \nu \rangle_\alpha^{\frac{1}{q}} \lambda^{\frac{\alpha}{q}+\frac{1}{s}}
    \Big( \sum_{i\in\Lambda} \big\Vert\Tilde{h}\big\Vert_{L^p(B(x_i,5))}^p  \Big)^{\frac{1}{p}}.
\end{equation*}
(If $q < p$, then we have to apply H\"older's inequality instead, and there will be an additional loss $\lesssim_{n} \lambda^{-n(\frac{1}{q}-\frac{1}{p})}$.) Recall that $\{B(x_i,5)\}_{i\in\Lambda}$ is finitely overlapping, 
\begin{equation}\label{Eq:AhForLambdaI}
    \begin{aligned}
    \norm{\AA h}_{L
    _x^q(B,\nu)(L^s_r(\lambda I))}
    &\leq C \langle \nu \rangle_\alpha^{\frac{1}{q}} \lambda^{\frac{\alpha}{q}+\frac{1}{s}}
    \Big( \sum_{i\in\Lambda} \big\Vert\Tilde{h}\big\Vert_{L^p(B(x_i,5))}^p  \Big)^{\frac{1}{p}} \\
    & \lesssim_n 
    C \langle \nu \rangle_\alpha^{\frac{1}{q}} \lambda^{\frac{\alpha}{q}+\frac{1}{s}}    \big\Vert\Tilde{h}\big\Vert_{L^p(\R^n)}\\
    &= 
    C \langle \nu \rangle_\alpha^{\frac{1}{q}} \lambda^{\frac{\alpha}{q} + \frac{1}{s} - \frac{n}{p}}
    \norm{h}_{L^p(\R^n)}
\end{aligned}
\end{equation}

If $s \geq q$, then by taking $\lambda = 2^{-k}$ and summing over $k \in \mathbb{N}$, we get
\begin{align*}
    \norm{F_x(r)}_{L
    _x^q(B,\nu)(L^s_r[0,2])} 
    &= \norm{\AA h(x,r)r}_{L
    _x^q(B,\nu)(L^s_r[0,2])}\\
    &\leq 
    \Big( \int_{B}\Big( \sum_{k\in\mathbb{N}} 2^{-(k-1)s} \int_{2^{-k} I}|\AA h(x,r)|^s \dd r \Big)^\frac{q}{s} \dd\nu(x)\Big)^{\frac{1}{q}}.
\end{align*}
Since $s\geq q$, $\ell^q\hookrightarrow\ell^s$, 
\begin{align*} 
   \Big( \int_{B}\Big( \sum_{k\in\mathbb{N}} 2^{-(k-1)s} \int_{2^{-k} I}|\AA h(x,r)|^s \dd r \Big)^\frac{q}{s} \dd\nu(x)\Big)^{\frac{1}{q}}
    &\leq
    \Big( \int_{B}\Big( \sum_{k\in\mathbb{N}} 2^{-(k-1)q} \norm{\AA h}_{L_r^s(2^{-k}I)}^q \Big) \dd\nu(x)\Big)^{\frac{1}{q}}\\
    &=
    \Big( \sum_{k\in\mathbb{N}} 2^{-(k-1)q} \norm{\AA h}_{L_x^q(B,\nu)(L_r^s(2^{-k}I))}^q  \Big)^{\frac{1}{q}}.
\end{align*}
Plugging \eqref{Eq:AhForLambdaI}, we obtain 
\begin{align*}
     \norm{F_x(r)}_{L
    _x^q(B,\nu)(L^s_r[0,2])}  
    &\leq  \Big( \sum_{k\in\mathbb{N}} 2^{-(k-1)q} \norm{\AA h}_{L_x^q(B,\nu)(L_r^s(2^{-k}I))}^q  \Big)^{\frac{1}{q}}\\
    &\lesssim_n
    C \langle \nu \rangle_\alpha^{\frac{1}{q}}
    \Big( \sum_{k\in\mathbb{N}} 2^{-(k-1)q} 2^{-k(\frac{\alpha}{q}+\frac{1}{s}-\frac{n}{p})q}  \Big)^{\frac{1}{q}}
    \norm{h}_{L^p(\R^n)}. 
\end{align*}

If $s<q$, then by using the Minkowski inequality instead of $\ell^q \hookrightarrow \ell^s$, we get
\begin{align*}
    \norm{F_x(r)}_{L
    _x^q(B,\nu)(L^s_r[0,2])} 
    \lesssim_n 
    C \langle \nu \rangle_\alpha^{\frac{1}{q}}
    \Big( \sum_{k\in\mathbb{N}} 2^{-(k-1)s} 2^{-k(\frac{\alpha}{q}+\frac{1}{s}-\frac{n}{p})s}  \Big)^{\frac{1}{s}}
    \norm{h}_{L^p(\R^n)}.
\end{align*}
In both cases, as long as $\frac{\alpha}{q}+\frac{1}{s}-\frac{n}{p} + 1 > 0$, we would get
\begin{align*}
    \norm{F_x(r)}_{L
    _x^q(B,\nu)(L^s_r[0,2])} \lesssim_{p,q,s,n} 
    \langle \nu \rangle_\alpha^{\frac{1}{q}} \norm{h}_{L^p(\R^n)}.
\end{align*}
(If $q<p$, then what we need becomes $\frac{\alpha}{q}+\frac{1}{s}-\frac{n}{q} + 1 > 0$.) 

Now let us briefly comment on what will happen if we take $\lambda > 1$ at the beginning. In this case, since $\frac{1}{\lambda}B \subseteq B$, no covering procedure is required, and we have
\begin{align*}
    \norm{\AA h}_{L
    _x^q(B,\nu)(L^s_r(\lambda I))}
    \leq
    \lambda^{\frac{\alpha}{q}+\frac{1}{s}} \Big(\int_{B} \Big( \int_{I} |\AA \Tilde{h}(x,\Tilde{r})|^s \dd \Tilde{r} \Big)^\frac{q}{s}\dd\Tilde{\nu}(x)\Big)^{\frac{1}{q}}
\end{align*}
in the first step. On the other hand, by covering $B(x,\rho)$ with $\lesssim_n \rho^n\leq \lambda^n$ unit balls whenever $\rho > 1$, we have
\begin{align*}
    \langle \Tilde{\nu} \rangle_\alpha 
    = \sup_{x\in\R^n, \rho\in(0,\lambda]} \frac{\nu(B(x, \rho))}{\rho^\alpha}
    \lesssim_n \lambda^n \langle\nu \rangle_\alpha.
\end{align*}
So we immediately get
\begin{align*}
    \norm{\AA h}_{L
    _x^q(B,\nu)(L^s_r(\lambda I))} 
    \lesssim_n 
    C \langle\nu \rangle_\alpha^{\frac{1}{q}} \lambda^{\frac{\alpha}{q} + \frac{1}{s} - \frac{n}{p} + \frac{n}{q}}
    \norm{h}_{L^p(\R^n)}.
\end{align*}
In general, the infinite sum over $\lambda = 2^k$ ($k\in\mathbb{N}$) is divergent. But for our purpose, the important thing is that for any $1 < T <\infty$, we have
\begin{align*}
    \norm{F_x(r)}_{L
    _x^q(B,\nu)(L^s_r[1,T])} 
    &= \norm{\AA h(x,r)r}_{L
    _x^q(B,\nu)(L^s_r[1,T])}\\
    &\leq \sum_{k=0}^{\lceil \log T \rceil} 
    \norm{\AA h(x,r)r}_{L
    _x^q(B,\nu)(L^s_r(2^k I))}\\
    &\lesssim_n 
    C \langle\nu \rangle_\alpha^{\frac{1}{q}} 
    \sum_{k=0}^{\lceil \log T \rceil}
    2^{k(\frac{\alpha}{q} + \frac{1}{s} - \frac{n}{p} + \frac{n}{q}) + k}
    \norm{h}_{L^p(\R^n)}\\
    &\lesssim_n 
    C T^{2n+2} \langle\nu \rangle_\alpha^{\frac{1}{q}} \norm{h}_{L^p(\R^n)}
\end{align*}
by the triangle inequality.

Finally, we summarize what we have got so far in the following proposition:
\begin{proposition}\label{Prop:local_[0,T]}
    Suppose that {\rm (\ref{Eq:main_0})} holds true for any $f$ and $\nu$, then we have
    \begin{align*}        \norm{F_x(r)}_{L_x^q(B,\nu)(L^s_r[0,T])} 
    \lesssim_{p,q,s,n}
    \max\{1,T\}^{2n+2} 
    \langle\nu \rangle_\alpha^{\frac{1}{q}} \norm{h}_{L^p(\R^n)}
    \end{align*}
    as long as $\frac{\alpha}{q}+\frac{1}{s} + 1 > \frac{n}{\min(p,q)}$.
\end{proposition}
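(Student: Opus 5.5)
The plan is to assemble Proposition~\ref{Prop:local_[0,T]} from the two rescaling computations just carried out, splitting $[0,T]$ into a small-radius part $[0,\min\{1,T\}]$ (or $[0,2]$) and a large-radius part $[1,T]$ when $T>1$.

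\textbf{Step 1: small radii.} For $\lambda=2^{-k}$, $k\in\mathbb{N}$, apply (\ref{Eq:main_0}) to the rescaled data $\Tilde h=h(\lambda\cdot)$ and the rescaled measure $\Tilde\nu=\lambda^{-\alpha}\nu(\lambda\cdot)$. Use $\langle\Tilde\nu\rangle_\alpha\le\langle\nu\rangle_\alpha$. Cover $\frac1\lambda B$ by finitely overlapping translates of $B$, and use the locality of $\AA$ together with translation invariance to obtain (\ref{Eq:AhForLambdaI}). If $q\ge p$, the embedding $\ell^p\hookrightarrow\ell^q$ gives the exponent $\frac{\alpha}{q}+\frac1s-\frac np$. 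If $q<p$, H\"older's inequality over the $\lesssim\lambda^{-n}$ balls costs $\lambda^{-n(\frac1q-\frac1p)}$, which turns the exponent into $\frac\alpha q+\frac1s-\frac nq$. In both cases the exponent is $\frac\alpha q+\frac1s-\frac n{\min(p,q)}$.

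\textbf{Step 2: sum the dyadic pieces.} Write $F_x(r)=\AA h(x,r)r$, so that $r\le 2^{-k+1}$ on $2^{-k}I$. Sum over $k$ using $\ell^q\hookrightarrow\ell^s$ when $s\ge q$, or Minkowski's inequality when $s<q$. This gives a geometric series with ratio $2^{-(1+\frac\alpha q+\frac1s-\frac n{\min(p,q)})}$, which converges exactly under the stated hypothesis. The result is $\norm{F_x}_{L^q_x(B,\nu)(L^s_r[0,2])}\lesssim\langle\nu\rangle_\alpha^{1/q}\norm{h}_{L^p}$. For $T\le 2$ one restricts to $[0,T]\subseteq[0,2]$.

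\textbf{Step 3: large radii.} For $T>1$, use $\lambda=2^k$ with $0\le k\le\lceil\log T\rceil$. Here $\frac1\lambda B\subseteq B$, so no covering is needed, but now $\langle\Tilde\nu\rangle_\alpha\lesssim\lambda^n\langle\nu\rangle_\alpha$. Each piece is then bounded by $\lambda^{\frac\alpha q+\frac1s-\frac np+\frac nq}$ times the weight $r\le 2^{k+1}$, and in the case $q<p$ one again loses only a bounded power of $\lambda$. Every exponent is at most $2n+2$, since $\alpha\le n$ and $p,q,s\ge1$. Summing the $O(\log T)$ terms by the triangle inequality therefore gives $\lesssim T^{2n+2}$.

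Combining the small- and large-radius bounds yields the factor $\max\{1,T\}^{2n+2}$. The main point requiring care is the bookkeeping in Step 1 when $q<p$: the number of covering balls must be tracked so that the loss is precisely $\lambda^{-n(1/q-1/p)}$, since this is what produces the $\min(p,q)$ in the condition. Everything else is the computation already displayed in the text.
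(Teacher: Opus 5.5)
Your proposal is correct and follows the paper's own argument: the $\lambda=2^{-k}$ rescaling with $\langle\tilde\nu\rangle_\alpha\le\langle\nu\rangle_\alpha$, the covering of $\frac{1}{\lambda}B$ by translates of $B$ (with the $\ell^p\hookrightarrow\ell^q$ versus H\"older dichotomy producing the $\min(p,q)$), the dyadic summation via $\ell^q\hookrightarrow\ell^s$ or Minkowski, and the $\lambda=2^k$ regime with $\langle\tilde\nu\rangle_\alpha\lesssim\lambda^n\langle\nu\rangle_\alpha$. Two small points: the large-radius step needs no $q$-versus-$p$ case split, since there is no covering there; and the $O(\log T)$ pieces should be summed as a geometric series rather than bounding each by $T^{2n+2}$, which avoids a stray $\log T$ factor.
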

In fact, by further exploiting the support property as before, we can bootstrap Proposition~\ref{Prop:local_[0,T]} to the following global version, whose proof we omit.
\begin{proposition}\label{Prop:global_[0,T]}
    Suppose that {\rm (\ref{Eq:main_0})} holds true for any $f$ and $\nu$, then we have
    \begin{align*}        \norm{F_x(r)}_{L_x^q(\R^n,\nu)(L^s_r[0,T])} 
    \lesssim_{p,q,s,n}
    \max\{1,T\}^{3n+2} 
    \langle\nu \rangle_\alpha^{\frac{1}{q}} \norm{h}_{L^p(\R^n)}
    \end{align*}
    as long as $q\geq p$ and $\frac{\alpha}{q}+\frac{1}{s} + 1 > \frac{n}{p}$.
\end{proposition}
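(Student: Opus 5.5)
We derive Proposition~\ref{Prop:global_[0,T]} from Proposition~\ref{Prop:local_[0,T]} using translation invariance and locality of $\AA$. By Proposition~\ref{Prop:local_[0,T]}, and since $\frac{n}{\min(p,q)}=\frac{n}{p}$ when $q\ge p$, we have
\[
\norm{F_x(r)}_{L_x^q(B,\nu)(L^s_r[0,T])}\lesssim \max\{1,T\}^{2n+2}\langle\nu\rangle_\alpha^{1/q}\norm{h}_{L^p(\R^n)}
\]
for every $h$ and $\nu$.

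The first step is to localize the right-hand side. For $x\in B$ and $r\le T$, the value $\AA h(x,r)$ depends only on $h|_{B(0,T+1)}$. Hence the bound above holds with $\norm{h}_{L^p(\R^n)}$ replaced by $\norm{h}_{L^p(B(0,T+1))}$, simply by applying it to $h\chi_{B(0,T+1)}$. Both $\AA$ and the Frostman constant $\langle\cdot\rangle_\alpha$ commute with translations. So for any $x_i\in\R^n$ we get the same estimate on $B(x_i,1/4)$ with the translated measure, namely
\[
\norm{F_x}_{L^q_x(B(x_i,1/4),\nu)(L^s_r[0,T])}\lesssim \max\{1,T\}^{2n+2}\langle\nu\rangle_\alpha^{1/q}\norm{h}_{L^p(B(x_i,T+1))}.
\]

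The second step is to cover $\R^n$ by a lattice family $\{B(x_i,1/4)\}_{i}$ of bounded overlap. Then
\[
\norm{F_x}^q_{L^q_x(\R^n,\nu)(L^s_r[0,T])}\le\sum_i\norm{F_x}^q_{L^q_x(B(x_i,1/4),\nu)(L^s_r[0,T])}.
\]
We apply the localized bound to each term and take $q$-th roots. Since $q\ge p$, the embedding $\ell^p\hookrightarrow\ell^q$ gives
\[
\Big(\sum_i\norm{h}^q_{L^p(B(x_i,T+1))}\Big)^{1/q}\le\Big(\sum_i\norm{h}^p_{L^p(B(x_i,T+1))}\Big)^{1/p}.
\]

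The main point to check is the overlap count of the enlarged balls $\{B(x_i,T+1)\}$. Each point of $\R^n$ lies in $\lesssim_n \max\{1,T\}^n$ of them, so
\[
\sum_i\norm{h}^p_{L^p(B(x_i,T+1))}\lesssim_n \max\{1,T\}^{n}\norm{h}^p_{L^p(\R^n)}.
\]
Taking the $p$-th root costs a factor $\max\{1,T\}^{n/p}\le\max\{1,T\}^{n}$. Combined with the factor $\max\{1,T\}^{2n+2}$, this gives the claimed $\max\{1,T\}^{3n+2}$ and completes the argument.
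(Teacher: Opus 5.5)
Your proposal is correct. It follows the route the paper has in mind: the paper omits this proof and says only that one exploits the support property as before, which means repeating the argument of the rescaling step. That argument localizes $h$ to $B(x_i,T+1)$ by finite propagation, covers $\R^n$ by translates of $B$, uses translation invariance of $\AA$ and $\langle\cdot\rangle_\alpha$, applies $\ell^p\hookrightarrow\ell^q$ for $q\ge p$, and counts the overlap of the enlarged balls. That overlap count is $\lesssim_n\max\{1,T\}^n$, and it is exactly what turns the exponent $2n+2$ of Proposition~\ref{Prop:local_[0,T]} into $3n+2$.
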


\subsection{Proof of Theorem~\ref{Thm:wave_eqn}}\label{subsec:Proof_wave_eqn}
With tools and results established in the previous two subsections, we can now start proving Theorem~\ref{Thm:wave_eqn}. Fix $n=2$. Proposition~\ref{Prop:local_[0,T]} and \ref{Prop:global_[0,T]} together with Proposition~\ref{Prop:Abel_transform} ($F = F_x(r)$, $t=T$) immediately yield
\begin{theorem}\label{Thm:Holder}
Suppose that {\rm (\ref{Eq:main_0})} holds true for any $f$ and $\nu$, then we have
\begin{align}\label{Eq:local_Holder}
\norm{u(x,t)}_{L_x^q(B,\nu)(C_{\rm loc}^{\frac{1}{2} - \frac{1}{s}}[0,T])} 
\lesssim_{p,q,s,T}
\langle\nu \rangle_\alpha^{\frac{1}{q}} \norm{h}_{L^p(\R^2)}
\end{align}
as long as $\frac{\alpha}{q}+\frac{1}{s} + 1 > \frac{n}{\min(p,q)}$, and
\begin{align}\label{Eq:global_Holder}
\norm{u(x,t)}_{L_x^q(\R^2,\nu)(C_{\rm loc}^{\frac{1}{2} - \frac{1}{s}}[0,T])} 
\lesssim_{p,q,s,T}
\langle\nu \rangle_\alpha^{\frac{1}{q}} \norm{h}_{L^p(\R^2)}
\end{align}
as long as $q\geq p$ and $\frac{\alpha}{q}+\frac{1}{s} + 1 > \frac{n}{p}$.
\end{theorem}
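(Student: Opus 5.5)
Theorem~\ref{Thm:Holder} is a pointwise-in-$x$ composition of Proposition~\ref{Prop:Abel_transform} with the mixed-norm bounds for $F_x$ in Propositions~\ref{Prop:local_[0,T]} and \ref{Prop:global_[0,T]}; the plan is to make that composition rigorous.

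\textbf{Step 1 (pointwise bound).} Fix $x$ and recall $u(x,t)=\J F_x(t)$, where $F_x(r)=\AA h(x,r)\,r$, as established in Section~\ref{subsec:prelim}. If $F_x\in L^s[0,T]$ with $s>2$, Proposition~\ref{Prop:Abel_transform} with $t=T$ gives
\[
\norm{u(x,\cdot)}_{C^{\frac12-\frac1s}[0,T]}\le C_{s,T}\norm{F_x}_{L^s[0,T]}.
\]
We interpret the $C^{\frac12-\frac1s}_{\rm loc}[0,T]$ norm in the statement as the $C^{\frac12-\frac1s}[0,T]$ norm. If $s\le 2$ the Hölder exponent is nonpositive and the statement is vacuous, so we assume $s>2$.

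\textbf{Step 2 (measurability).} Before integrating we must know that $x\mapsto \norm{u(x,\cdot)}_{C^\beta[0,T]}$ is $\nu$-measurable. By the joint continuity of $u$ on $\R^2\times[0,T]$ shown in Section~\ref{subsec:prelim}, this norm is a supremum over rational pairs $(t,t')$ of continuous functions of $x$. It is therefore lower semicontinuous, hence Borel.

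\textbf{Step 3 (integration).} Taking the $L^q(B,\nu)$ norm of the pointwise bound in $x$ gives
\[
\norm{u}_{L^q_x(B,\nu)(C^{\frac12-\frac1s}[0,T])}\le C_{s,T}\norm{F_x(r)}_{L^q_x(B,\nu)(L^s_r[0,T])}.
\]
Proposition~\ref{Prop:local_[0,T]} (with $n=2$) then yields \eqref{Eq:local_Holder} under $\frac{\alpha}{q}+\frac1s+1>\frac{n}{\min(p,q)}$. Repeating the argument on $\R^2$ and invoking Proposition~\ref{Prop:global_[0,T]} yields \eqref{Eq:global_Holder} under $q\ge p$ and $\frac{\alpha}{q}+\frac1s+1>\frac{n}{p}$. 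In both cases the factor $\max\{1,T\}^{3n+2}$ is absorbed into the implicit constant, which may depend on $T$.

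\textbf{Main obstacle.} The only delicate point is a qualitative one. Proposition~\ref{Prop:Abel_transform} needs $F_x\in L^s[0,T]$ for the particular $x$, whereas the mixed-norm bound only guarantees this for $\nu$-a.e.\ $x$. On the $\nu$-null set where $\norm{F_x}_{L^s}=\infty$, the pointwise inequality holds trivially because its right-hand side is infinite, so nothing is lost in the integral. A second subtlety is that $F_x$ is defined only through $h$ as a genuine function; this is handled by the convention of Section~\ref{subsec:prelim} that $L^p$ functions are not identified almost everywhere, together with the absolute convergence of the Poisson integral for every $x$.
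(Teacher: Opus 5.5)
Your proposal is correct and follows the paper's argument: apply Proposition~\ref{Prop:Abel_transform} to $F_x$ for each fixed $x$, then take the $L^q(\nu)$ norm in $x$ and invoke Propositions~\ref{Prop:local_[0,T]} and~\ref{Prop:global_[0,T]}. Your extra remarks are sensible details that the paper leaves implicit: measurability via the joint continuity of Section~\ref{subsec:prelim}, which is established there only for $p>2$ (the only regime where the paper applies the theorem), and the treatment of the $\nu$-null set where $F_x\notin L^s$.
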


For our purpose, we claim that (\ref{Eq:main_0}) holds for 
\begin{itemize}
    \item $p=q>3$ and $s = \frac{3}{1-\alpha}$ when $\alpha\in(0,1]$;
    \item $p=q>2$ and $s = \frac{2}{2-\alpha}$ when $\alpha \in (\frac{3}{2},2]$.
\end{itemize}
By interpolating Theorem~\ref{Thm:MainTheorem_0} with the trivial bound for $(\frac{1}{p}, \frac{1}{q}, \frac{1}{s}) = (0,0,0)$, we immediately verify the cases $\alpha \in (0,1)$ and $\alpha \in (\frac{3}{2}, 2)$.  When $\alpha=1$, the claim follows from \cite[Theorem~1.13]{CDK25} and \cite[Lemma~3.2]{HKL22}. When $\alpha=2$, the claim follows directly from \cite[Corollary~1.4]{HKL22}.

Since we always have $p=q>2$ and $\frac{n}{p} = \frac{2}{p} < 1 < \frac{\alpha}{q} + \frac{1}{s} + 1$, we can apply (\ref{Eq:global_Holder}) in Theorem~\ref{Thm:Holder}:
\begin{align}\label{Eq:R2_global_Holder}
\norm{u(x,t)}_{L_x^q(\R^2,\nu)(C_{\rm loc}^{\beta}[0,T])} 
\lesssim_{\alpha,T}
\langle\nu \rangle_\alpha^{\frac{1}{q}} \norm{h}_{L^p(\R^2)}
\end{align}
for 
$\beta = \frac{2\alpha + 1}{6}$ if $p=q>3$ and $\alpha \in (0,1]$, and for $\beta = \frac{\alpha-1}{2}$ if $p=q>2$ and $\alpha \in (\frac{3}{2},2]$. 

Let us only show the first part of Theorem~\ref{Thm:wave_eqn}: If $h \in L_{\rm loc}^{3+}(\R^2)$, then $\dim_H E(u,\beta) \leq 3\beta-\frac{1}{2}$, $\forall\,\beta\in(\frac{1}{6},\frac{1}{2}]$. The second part follows the same line of reasoning. We argue by contradiction. Suppose that $\dim_H E(u,\beta) > 3\beta-\frac{1}{2}$ for some $\beta\in(\frac{1}{6},\frac{1}{2}]$, then there exists some $0<T<\infty$ such that $\dim_H E(u,\beta,T) > 3\beta-\frac{1}{2}$, where $E(u,\beta,T) \coloneqq \{x\in\mathbb{R}^2: u(x,\cdot) \not\in C^\beta[0,T]\}$ (since $E(u,\beta) = \medcup_{T\in \mathbb{N}} E(u,\beta,T)$ by definition). By Frostman's lemma, there exists a Borel measure $\nu_{*}$ satisfying $\spt\, \nu_* \subseteq E(u,\beta,T)$, $0 < \nu_*(E(u,\beta,T)) < \infty$, and $\langle \nu_* \rangle_{\alpha} \leq 1$ ($\alpha \coloneqq 3\beta - \frac{1}{2} \in (0,1]$). By covering $\R^2$ with countably many translates of $B$, there must exist some disc $B_* = B(x_*, 1/4)$ such that $0 < \nu_*(B_*) < \infty$. Note that in Poisson's formula (\ref{Eq:Poisson}), the convolution kernel is localized in a ball of radius $t$. So if we consider $L_x^q(B_*,\nu_*)$ on the left-hand side of (\ref{Eq:R2_global_Holder}), then only the portion of $h$ supported on $B(x_*,4T+1)$ will be relevant. To be more precise, since $h\in L_{\rm loc}^{3+}(\R^2)$, $\beta = \frac{2\alpha+1}{6}$, and $\alpha \in (0,1]$, we can apply (\ref{Eq:R2_global_Holder}) with $\nu = \nu_*|_{B_*}$ to get
\begin{align*}
\norm{u(x,t)}_{L_x^q(B_*),\nu_*)(C^{\beta}[0,T])} 
\lesssim_{\alpha,T}
\norm{h}_{L^p(B(x_*,4T+1))} < \infty
\end{align*}
for any $p=q>3$. However, $\spt\, \nu_* \subseteq E(u,\beta,T)$ implies that $\norm{u(x,\cdot)}_{C^{\beta}[0,T]}$ = $\infty$ for any $x \in \spt\,\nu_*$. This together with $\nu(B_*) > 0$ forces the left-hand side of the above inequality to be infinity, which contradicts the finiteness of the right-hand side. And so the proof of Theorem~\ref{Thm:wave_eqn} is completed.

\begin{remark}
    Looking back at the proof, it is the ``localization'' property of $u(x,t)$ that allows us to deal with $h \in L_{\rm loc}^{p}$, not just $h \in L^p$. This ultimately comes down to the finite propagation speed of disturbances: local singularity only depends on nearby information. This is a universal feature of wave equations that also holds in higher dimensions. In particular, it is easy to see that it suffices to only use the local estimate (\ref{Eq:local_Holder}) in Theorem~\ref{Thm:Holder} for the above arguments to work, if we only care about the Hausdorff dimension of exceptional set estimates. This modified approach loosens the requirements for $q$ as we no longer need $q\geq p$, although it does not matter in our cases. Here we choose to use (\ref{Eq:global_Holder}) simply because it seems good to state a global estimate like (\ref{Eq:R2_global_Holder}). 
\end{remark}

\subsection{Historical remarks}\label{subsec:further_direction}
In this final subsection, we will provide additional historical remarks and discuss possible further directions regarding the regularity of the wave equations in time $t$, or the closely related $\AA^n$ in radius $r$ for general $n\geq2$. 

We have mentioned that our Theorem~\ref{Thm:wave_eqn} on H\"older regularity, although only for $n=2$, is the first of its kind in the literature. Possible reasons are that classical Fourier analytic techniques do not work very well in the weighted mixed-norm setting, and if $n\geq 3$, then even the well-posedness of the problem might fail. For example, when $n=3$, by the well-known Kirchhoff's formula, the solution of the wave equation is represented exactly via $\AA^3$ due to the Huygens property, and so Abel-type regularity-improving results like Proposition~\ref{Prop:Abel_transform} are no longer available. One cannot expect $u(x,t)$ to be jointly continuous in both $x$ and $t$ if we only have $h \in L^p(\R^3)$, and so it is subtle to even formulate the problem of bounding exceptional sets for H\"older regularity in $t$. 

Historically, similar issues also happens in the study of divergence sets of dispersive equations mentioned in Section~\ref{subsec:History}. A typical way \cite{BBCR11} of remedying this is to fix one multiplier representation of $u$, and interpret pointwise convergence to the initial data as up to a null set. In Theorem~\ref{Thm:wave_eqn}, we do not need to worry about choosing a specific interpretation of weak solutions due to the continuity in Section~\ref{subsec:prelim}, which is kind of lucky as our main results are in the admissible regime $p>2$. In other words, everything is unique and deterministic at least in $\R^2$, which is physically satisfactory to see. 

Nevertheless, one can still ask what would get unambiguous regularity results in higher dimensions if we allow $h$ to lie in Sobolev spaces $L_\gamma^p$ for reasonable $\gamma>0$. As mentioned in Item (a) of Section~\ref{subsec:add_background}, this requires a different framework from that for $L^p$. More precisely, our discretization and duality arguments should be modified accordingly, and it is possibly a brand new problem. Also, when $\gamma$ is large, probably it would no longer be appropriate to put mixed-norms like $L_x^q(\nu)(L_r^s(I))$ on the left-hand side, since the expected regularity can exceed the best integrability $s=\infty$. For these reasons, we only focus on $h \in L^p$ in this paper. Similarly, we also do not consider wave equations with nonzero initial displacement, which would canonically involve the Sobolev regime. 

Anyway, the discussions so far provide motivation for bounding
\begin{align*}
    \norm{\AA^n f}_{L_x^q(\nu)(\tnorm{\cdot})},
\end{align*}
where $\tnorm{\cdot}$ can be a general norm in $r$. Unfortunately, there seems to be very few results towards this direction. When $\nu = \mathcal{L}^n$ (unweighted case), there are some noteworthy early results: 
\begin{itemize}
    \item Peyrière and Sjölin \cite{PS78} obtained estimates where $q=2$ and $\tnorm{\cdot}$ is an $L^2$-based Besov space. Via embedding inequalities, this implies H\"older regularity in $r$ outside of a Lebesgue null set of $x$ when $n \geq 3$. Endpoint results were later obtained by Oberlin and Stein \cite{OS82} via complex interpolation of analytic families of operators.

    \item Sjölin \cite{sjolin83} proved sharp estimates where $q=2$ and $\tnorm{\cdot}$ is an $L^2$-based Sobolev space (the right-hand side is $\norm{f}_{L^p(\R^n)}$). The proof relies heavily on properties of Bessel functions. Via embedding inequalities, this implies bounds for $\tnorm{\cdot} = L^s, {\rm BMO}, C^\beta$ according to different ranges of $p$. As an application, results on convergence of almost everywhere convergence of Fourier integrals on $L^2(\R^n)$ ($n\geq 2$) were obtained. 

    \item Sjölin's result \cite{sjolin83} were later extended to general $p \leq q \leq p'$ ($p\geq 2$) cases \cite{sjolin85}, which were in turn extended to more general cases where $\AA^n$ is replaced with generalized spherical means \cite{borjeson88}. These include the solutions to linear wave equations as a special case.
\end{itemize} 

All the above works relies heavily on Fourier analytic methods. In particular, $L^2$-based estimates plays a central role for the use of Plancherel's identity. It is unclear to us to what extent can these classical methods be adapted to $\nu$-weighted settings. In contrast, our geometric framework seems more flexible to such extensions. Even when $\nu=\mathcal{L}^n$, there are two major limitations of these works. Firstly, almost all the results require $p\leq 2$, which is exactly outside of the admissible range $p>2$ for continuity in Section~\ref{subsec:prelim}. Nevertheless, we should point out that when $p=n=2$, the $\tnorm{\cdot}={\rm BMO}$ results matches Case (2) in Theorem~\ref{Thm:MainTheorem_0} when $\alpha=2$. Secondly, the regularity results are always only local in time, i.e., involving a bump $\varphi \in C_c^\infty(0,\infty)$ in $\tnorm{\cdot}$. In contrast, H\"older estimates in our Theorem~\ref{Thm:wave_eqn} include $0$ as a closed endpoint. 
\section{Necessary conditions}\label{sec:necessary}
We now test several examples to prove Proposition~\ref{Prop:necessary} ($n=2$). Let $C(x,r)$ be the circle centered at $x$ with radius $r$ and $C_\delta(x,r)$ be the corresponding $\delta$-neighborhood. For simplicity, we only focus on $\AA_\delta$, and the examples and arguments for $\AA$ are exactly the same. Examples~1,2,3 are of typical Knapp type, and Example~4,5 are refined self‑replicating versions of Example~2,1, respectively.
\begin{example}
    Take $f=\chi_{C_\delta(0,1)}$, then $\norm{f}_{L^p(\R^2)}\sim \delta^{\frac{1}{p}}$.
    \begin{equation}\label{Eq:ADelta1}
        \AA_\delta f(x,r)\sim \frac{1}{\delta}|C_\delta(x,r)\medcap C_\delta(0,1)|\sim 1, \text{ for $x\in B(0,\delta), r\in [1,1+\delta]$}.
    \end{equation}
    Let $\nu$ supported in $B=B(0,1/4)$ satisfying $\langle\nu\rangle_\alpha \sim 1$ and $\nu(B(0,\delta))=\delta^{\alpha}$. Then \eqref{Eq:ADelta1} implies that 
    \begin{equation*}
         \norm{\AA_\delta f}_{L
        _x^q(B,\nu)(L^s_r(I))}\sim \delta^{\frac{1}{s}}\nu(B(0,\delta))^{\frac{1}{q}}=\delta^{\frac{1}{s}+\frac{\alpha}{q}}.
    \end{equation*}
    This gives 
    \begin{equation*}
        \delta^{\frac{1}{s}+\frac{\alpha}{q}}\lessapprox\delta^{\frac{1}{p}}\Longleftrightarrow \frac{1}{s}+\frac{\alpha}{q}\geq \frac{1}{p}.
    \end{equation*}
\end{example}

\begin{example}
   Let $R$ be a $\sqrt \delta\times \delta$-rectangle centered at $0$ whose long side is parallel to $x$-axis. Take $f=\chi_{R}$, then $\norm{f}_{L^p(\R^2)}\sim \delta^{\frac{3}{2p}}$.
    \begin{equation}\label{Eq:ADelta2}
        \AA_\delta f(x,r)\sim \frac{1}{\delta}|C_\delta(x,r)\medcap R|\sim \delta^{\frac{1}{2}}, \text{ for $x\in \tilde R, r\in I_\delta$},
    \end{equation}
    where $\tilde R$ is a $1\times \sqrt \delta$-rectangle dual to $R$ and $I_\delta$ is some interval with length $\delta$. When $\alpha\in [1,2]$, 
    take $\nu(x)=\delta^{\frac{\alpha}{2}-1}\chi_{\tilde R}(x)$. Then $\langle\nu\rangle_\alpha \sim 1$, and 
    \eqref{Eq:ADelta2} implies that 
    \begin{equation*}
         \norm{\AA_\delta f}_{L
        _x^q(B,\nu)(L^s_r(I_\delta))}\sim \delta^{\frac{1}{2}}\delta^{\frac{1}{s}}\nu(\tilde R)^{\frac{1}{q}}=\delta^{\frac{1}{2}}\delta^{\frac{1}{s}+\frac{\alpha-1}{2q}}.
    \end{equation*}
    This gives 
    \begin{equation*}
        \delta^{\frac{1}{2}+\frac{1}{s}+\frac{\alpha-1}{2q}}\lessapprox\delta^{\frac{3}{2p}}\Longleftrightarrow 1+\frac{2}{s}+\frac{\alpha-1}{q}\geq \frac{3}{p}.
    \end{equation*}
    When $\alpha\in (0,1]$, 
    take $\nu(x)=\delta^{-\frac{1}{2}}\chi_{\tilde R}(x)$. Then $\langle\nu\rangle_\alpha \sim 1$, and \eqref{Eq:ADelta2} implies that 
    \begin{equation*}
         \norm{\AA_\delta f}_{L
        _x^q(B,\nu)(L^s_r(I))}\sim \delta^{\frac{1}{2}}\delta^{\frac{1}{s}}\nu(\tilde R)^{\frac{1}{q}}=\delta^{\frac{1}{2}}\delta^{\frac{1}{s}}.
    \end{equation*}
    This gives 
    \begin{equation*}
        \delta^{\frac{1}{2}+\frac{1}{s}} \lessapprox
        \delta^{\frac{3}{2p}}\Longleftrightarrow 1+\frac{2}{s} \geq \frac{3}{p}.
    \end{equation*}
\end{example}

\begin{example}
  Take $f=\chi_{B(0,\delta)}$, then $\norm{f}_{L^p(\R^2)}\sim \delta^{\frac{2}{p}}$.
    \begin{equation}\label{Eq:ADelta3}
        \AA_\delta f(x,r)\sim \frac{1}{\delta}|C_\delta(x,r)\medcap B(0,\delta)|\sim \delta, \text{ for $x\in B, r\in I_\delta$},
    \end{equation}
   
    Take $\nu$ such that $\nu(B)=1$ and $\langle\nu\rangle_\alpha \sim 1$. Then
    \eqref{Eq:ADelta3} implies that 
    \begin{equation*}
         \norm{\AA_\delta f}_{L
        _x^q(B,\nu)(L^s_r(I))}\sim \delta\delta^{\frac{1}{s}}\nu(B)^{\frac{1}{q}}=\delta^{1+\frac{1}{s}}.
    \end{equation*}
    This gives 
    \begin{equation*}
        \delta^{1+\frac{1}{s}}\lessapprox\delta^{\frac{2}{p}}\Longleftrightarrow 1+\frac{1}{s}\geq \frac{2}{p}.
    \end{equation*}
\end{example}
\begin{example}
    Let $\alpha\in [1,2]$ and $R$ be a disjoint union of $\sim\delta^{\frac{1-\alpha}{2}}$ many $\delta\times \sqrt{\delta}$ rectangles, with the long side parallel to $x$-axis and spacing $\delta^{\frac{\alpha-1}{2}}$. Take $f=\chi_R$, then $\norm{f}_{L^p(\R^2)}\sim\delta^{\frac{4-\alpha}{2p}}$. 
    \begin{equation}\label{Eq:ADelta4}
        \AA_\delta f(x,r)\gtrsim \frac{1}{\delta}\delta^{3/2}\sim \sqrt \delta, \text{ for $x\in B, r\in I_\delta$}.
    \end{equation}
    Take $\nu(x)=\delta^{\frac{\alpha}{2}-1}\chi_{ \Tilde{R}}(x)$, where $\Tilde{R}$ is a disjoint union of parallel $1 \times \sqrt{\delta}$ rectangles dual to those in $R$. Hence $\nu(B)\sim 1$ and $\langle\nu\rangle_\alpha \sim 1$. Then \eqref{Eq:ADelta4} implies that 
     \begin{equation*}
         \norm{\AA_\delta f}_{L
        _x^q(B,\nu)(L^s_r(I))}\sim \sqrt \delta\delta^{\frac{1}{s}}\nu(B)^{\frac{1}{q}}
        \sim \delta^{\frac{1}{2}+\frac{1}{s}}.
    \end{equation*}
    This gives 
    \begin{equation*}
        \delta^{\frac{1}2{}+\frac{1}{s}}\lessapprox\delta^{\frac{4-\alpha}{2p}}\Longleftrightarrow 1+\frac{2}{s}\geq \frac{4-\alpha}{p}.
    \end{equation*}
\end{example}

\begin{example}
    Let $\alpha\in (0,1]$, and $\Tilde{R}$ be the disjoint union of $\sim\delta^{-\alpha}$ many $\delta$-balls $\{B(c_i, \delta)\}_i$ aligned along the $x$-axis with $c_i = (i\delta^\alpha, 0)$. Take $\nu(x) = \delta^{\alpha-2}\chi_{\Tilde{R}}(x)$, then $\nu(B) \sim 1$ and $\langle\nu\rangle_\alpha \sim 1$. Take $f = \chi_{R}$, where $R = \medcup_i C_\delta(c_i,1)$, then $\norm{f}_{L^p(\R^2)}= |R|^{\frac{1}{p}} \lesssim \delta^{\frac{1-\alpha}{p}}$. Now \eqref{Eq:ADelta1} implies that 
     \begin{equation*}
         \norm{\AA_\delta f}_{L_x^q(B,\nu)(L^s_r(I))}\sim \delta^{\frac{1}{s}}\nu(B)^{\frac{1}{q}}
        \sim \delta^{\frac{1}{s}}.
    \end{equation*}
    This gives 
    \begin{equation*}
        \delta^{\frac{1}{s}}\lessapprox\delta^{\frac{1-\alpha}{p}}\Longleftrightarrow \frac{1}{s}\geq \frac{1-\alpha}{p}.
    \end{equation*}
\end{example}

\bibliographystyle{plainnat}
\bibliography{References}

\end{document}